\def\beq{\begin{equation}}
\def\eeq{\end{equation}}
\def\dist{{\rm dist}}
\def\bB{\mathbf{B}}
\def\bI{\mathbf{I}}
\def\bR{\mathbf{R}}
\def\bf{\textbf{}}
\def\bl{\mathbf{l}}
\def\bx{\mathbf{x}}
\def\b1{{\boldsymbol{1}}}
\def\cA{\mathcal{A}}
\def\cB{\mathcal{B}}
\def\cD{\mathcal{D}}
\def\cE{\mathcal{E}}
\def\cF{\mathcal{F}}
\def\cG{\mathcal{G}}
\def\cH{\mathcal{H}}
\def\eps{\varepsilon}
\def\cM{\mathcal{M}}
\def\cN{\mathcal{N}}
\def\cO{\mathcal{O}}
\def\cR{\mathcal{R}}
\def\cS{\mathcal{S}}
\def\cW{\mathcal{W}}
\def\cZ{\mathcal{Z}}
\def\tf{{\tilde{f}}}
\def\tZ{\tilde{Z}}
\def\tg{\tilde{g}}
\def\tZ{\tilde{Z}}
\def\tX{\tilde{X}}
\def\eps{\varepsilon}
\def\dist{\text{\rm dist}}
\def\1{\mathbf{1}}
\def\RR{\mathbb R}
\def\NN{\mathbb N}
\def\EE{\mathbb E}
\def\PP{\mathbb P}
\def\cD{\mathcal D}
\def\M{\mathcal M}
\def\N{\mathcal N}
\def\E{\mathcal E}
\def\B{\mathcal B}
\def\R{\mathcal{R}}
\def\cH{\mathcal H}
\def\F{\mathcal F}
\def\cF{\mathcal F}
\def\cG{\mathcal G}
\def\S{\mathcal S}
\def\O{\mathcal O}
\def\cov{\text{Cov}}
\def\an1{A_{n,1}}
\def\an1c{A_{n,1}^c}
\def\hxn1{\hat X_{n,1}}
\def\hzn1{\hat Z_{n,1}}
\def\hsn1{\hat S_{n,1}}
\def\x1{X_1}
\def\var{\text{Var}}
\def\tf{\tilde f}
\def\cov{\text{Cov}}
\def \E{\mathbf{E}}
\def\({\left(}
\def\rt){\right)}
\def \N{\mathbb{N}}
\def \E{\mathbf{E}}
\def \eps{\epsilon}
\def \({\left(}
\def \){\right)}
\def \beq{\begin{equation}}
\def \ee{\end{equation}}
\def \bea{\begin{eqnarray}}
\def \eea{\end{eqnarray}}
\def \bes{\begin{eqnarray*}}
\def \ees{\end{eqnarray*}}
\def \nn{\nonumber}
\newtheorem{theorem}{Theorem}
\newtheorem{lemma}[theorem]{Lemma}
\newtheorem{proposition}[theorem]{Proposition}
\newtheoremstyle{myremark}{}{}{\small\normalfont}{0pt}{\small\scshape}{.}{.5em}{}
\theoremstyle{myremark}
\begin{document}

\title{Supperdiffusions for certain nonuniformly hyperbolic systems}

\author{Luke Mohr
 \and
Hong-Kun Zhang\thanks{Department of Mathematics and Statistics, University of Massachusetts,  Amherst MA 01003, USA. Email: lukemohr@gmail.com, hongkun@math.umass.edu. }}

\date{\today}
\maketitle

\begin{abstract}
We investigate  superdiffusion for stochastic processes generated by  nonuniformly hyperbolic system models, in terms of the convergence of rescaled distributions
to the  normal distribution following the abnormal central limit theorem, which differs from the usual
requirement that the mean square displacement grow asymptotically linearly in time.  We construct a martingale approximation that follows the idea of Doob's decomposition theorem.  We obtain an explicity formula for the superdiffusion constant in terms of the fine structure  that  originates in the phase transitions as well as the geometry of the  configuration  domains of the systems.    Models that satisfy our main assumptions include   chaotic Lorentz gas, Bunimovich stadia, billiards with cusps, and can be apply to other nonuniformly hyperbolic systems with slow correlation decay rates of order $\cO(1/n)$.  \vskip1cm

 {\bf Keywords:} nonstandard central limit theorem, nonuniformly hyperbolic systems,  first return time.
 \centerline{AMS classification numbers: 37D50, 37A25}
\end{abstract}
\tableofcontents
\printindex
\section{Introduction and main results}
\subsection{Introduction}
One of the most fundamental mechanisms in
 nonequilibrium physical systems  is the diffusion process.  Although random processes have been used to model diffusion based on
Einstein's seminal work on Brownian motion \cite{Ein},  it has been realized recently that  many simple deterministic dynamical systems  resemble diffusion to some extent. The theory of dynamical systems has its origin in classical and statistical mechanics through the works of Poincar\'e and
Boltzmann. One of the key aims of  statistical mechanics is to relate the microscopic properties of a fluid  to the transport coefficient which leads to diffusion on a macroscopic
scale.  These include diffusion coefficients, viscosity, and heat conduction.
Being the simplest physical systems resembling diffusion \cite{BSp},   deterministic billiards have attracted much attention since Sinai's seminal work  \cite{Sin}. Limiting laws in classical hyperbolic systems are better understood and proved or almost proved in quite a few cases. However, only recently have these laws
become a main focus of study
for nonuniformly hyperbolic systems, so the development of new techniques to prove limiting laws is of great mathematical
interest.

There
are many physically-motivated variations on billiards, such as Lorentz gas, Bunimovich Stadia, etc.; see \cite{Sin, Bu74,Bu79,BSC90,BSC91, CM} for detailed descriptions.   These systems were proved to  be hyperbolic, ergodic, and mixing.  Many
 mixing  systems have slow (polynomial)
mixing rates which cause weak statistical properties; this situation commonly arises in nonuniformly hyperbolic systems.
The
central limit theorem may fail and affect the convergence to a Brownian motion in a
proper space-time limit (weak-invariance principle) and many other useful approximations by
stochastic processes that play crucial roles in statistical
mechanics. Such systems  exemplify a delicate
transition from regular behavior to chaos. For this reason, they
have attracted considerable interest in mathematical physics
during the past 20 years; see \cite{ABV, BCD, BSC91, C99,C06, CD09,CM07, CZ05a, CZ09, D04,H82,Ma04, P92,Y98,Y99}  and the references therein.

It is somewhat challenging to study hyperbolic systems with singularities, including chaotic billiards. One main reason is that these systems may have singularities that  lead to an unpleasant fragmentation of the phase space. More precisely, any unstable manifold may expand locally, but the singularities may cut its images into many pieces. Some of the resulting pieces are a much
smaller size than the original ones, and this requires a long time to
recover. Moreover, the differential of the map can also be unbounded and/or have unbounded distortion,  which aggravates the analysis.

Let $(T, \cM,    \mu_{\cM})$ be an ergodic transformation of a probability space $(\cM,\mu_{\cM},\cF)$,
with $\cM$ a $d$-dimensional Riemannian manifold and $\cF$ the Borel $\sigma$-algebra on $\cM$. We assume   the map  $T: \cM \rightarrow \cM$ preserves a mixing probability measure
$\mu_{\cM}$ on the
$\sigma$-algebra $\cF$. One can study the statistical properties of a real-valued observable function
$f$ on $\cM$ by defining the sequence of random variables $X_i = f \circ T^i$, $i\geq 1$; this sequence is dependent but
identically distributed due to the invariance of $\mu_{\cM}$. One intuitive line of inquiry is to study whether classical  probability limiting theorems, such as the
central limit theorem, are satisfied for this process. It is often the case that as long as the system in question
exhibits sufficiently chaotic behavior one may expect such limiting theorems to hold.

Much attention is now shifting to open questions concerning  nonuniformly hyperbolic systems with  decay rates of correlations as slow as of order $\cO(n^{-1})$. Indeed, the central limit theorem has been proved for
a variety of these systems, see for example the works of B\'alint, Chernov, Dolgopyat, Gou\"ezel, Liverani, Markarian,  Sz\'asz, and
Varj\'u \cite{BCD, GB, CM,  L96, SV07}.  The techniques for two-dimensional hyperbolic systems that have been previously utilized are often quite
system specific and require geometric calculations based on the particular dynamical system being considered. In this
paper, we utilize a martingale difference decomposition
technique which can be used to prove the Central Limit Theorem (CLT) for  a wide variety of
nonuniformly hyperbolic systems.  In addition to the new martingale approximation method for hyperbolic systems, we also go beyond previous research, including \cite{BCD, GB, SV07}, by (i)  proving the CLT for processes $\{f\circ T^n\}$ generated by more general  observables, that are only required to be H\"{o}older continuous on stable manifolds;  (ii)  we are able to explicitly compute the supperdiffusion constants for Sinai billiards on torus with finite number of free flight channels; (iii) the method developed in this paper is not restricted to billiards, but can also be applied for other non uniformly hyperbolic systems.

Our main goal for this paper is to develop central limit theorems for  certain nonuniformly hyperbolic
systems with slow correlation decay rates of order $\cO(n^{-1})$. The stochastic processes generated by these systems exhibit the super-diffusion phenomenon. The main tools we use in our proofs are martingale
approximation and the martingale central limit theorem, which are presented in depth by Hall and Heyde in
\cite{HH80} and  Helland in \cite{H82}. One advantage of our methods is that we will, in many cases, be able to give explicit expressions
for the super-diffusion constants.
Additionally, we find that our proposed methods are applicable
to a wide variety of hyperbolic systems. Our goal is that this will lead to a more unified approach to studying the statistical
properties of nonuniformly hyperbolic systems with slow decay of correlations.

One major advantage of studying martingales is that, while they are not generally independent sequences of random
variables, their dependence is ``weak enough" that it is possible to generalize results for the i.i.d. case to martingales
with certain additional properties. Furthermore, the following result due to Doob \cite{Doo90} gives the study of
the statistical properties of martingales even greater significance.\\

\noindent\textbf{Doob's decomposition theorem.}
\textit{Let $(\Omega, \F, \PP)$ be a probability space, $\{\F_n\}_{n\in\NN}$ a filtration of $\F$, and $\{X_n\}_{n\in\NN}$ an
adapted stochastic process with $\EE|X_n| < \infty$ for all $n\in\NN$. Then there exists a martingale $\{M_n\}_{n\in\NN}$
and an integrable predictable process $\{A_n\}_{n\in\NN}$ starting with $A_1 = 0$ such that $X_n = M_n + A_n$ for
every $n\in\NN$. This decomposition is unique almost surely.}\\

Note that a process $\{A_n\}_{n\in\NN}$ is predictable if $A_n$ is $\F_{n-1}$-measurable for every $n\ge 2$. The above
result illustrates the usefulness of studying the statistical properties of martingales. If a stochastic process is
adapted to a filtration and each random variable in that process is integrable, then showing the central limit theorem
for that stochastic process may reduce to proving an associated martingale central limit theorem, as long as the
process $\{A_n\}_{n\in\NN}$ is, in a sense, negligibly small.

\subsection{Abnormal CLT for certain stationary Processes}

Let $(\Omega, \cF, \mathbb{P})$ be a probability space, with  $\cF$ being a $\sigma$-algebra.  Let  $\{\cF_n, n\geq 0\}$ be an increasing filtration with
$$\cF_0\subset\cdots\subset \cF_{n-1}\subset\cF_n\subset \cF.$$ We assume $\{X_n, n\geq 0\}$ is a sequence of stationary random variables that is adapted to the filtration $\{\cF_n, n\geq 0\}$.

We first make some specific assumptions on the stochastic  process $\{X_n\}$.
\medskip

\begin{itemize}
\item[(\textbf{A1})] Assume $X_0$ has infinite variance, and $\EE(X_0)=0$.
  Let  $H(t):=\var (X_0\bI_{ |X_0|<t})$, we assume $H(t)$ is a slowly varying function at infinity, i.e,  for all $c>0$, $\lim_{t\to\infty} H(ct)/H(t)=1$. Let  $c_n\in (0,+\infty]$, with $\lim c_n\to\infty$, such that
\beq\label{choosecn}\lim_{n\to\infty} n \mathbb{P}(|X_1|\geq c_n)=0,\,\,\,\,\,\,\,\lim_{n\to\infty}\frac{c_n}{\sqrt{n H(c_n)}}=0.\eeq
We define $X_{n,k}=X_k\cdot\bI_{|X_k|<c_n}$ for any $n\geq 1$ and $k=0,\cdots, n$.
\item[(\textbf{A2})]  There exists $\theta\in (-1,1)$ 
such that for any $n\geq 0$, $k=1,\cdots, n$,
\beq\label{EEXXA12}\EE(X_{n,k}| \cF_{k-1}) = \theta X_{n,k-1}+\cE_{n,k-1},\eeq
where $\cE_{n,k}\in \cF_{k-1}$ is a stationary process such that $\mathbb{E}( \cE_{n,0}^2)<\infty$, $\mathbb{E}(|X_{n,0}\cE_{n,0}|)<\infty$ and
  $$ \sum_{k=0}^{n-1}(n-k)\cov(\cE_{n,0}\cdot\cE_{n,k})=\cO(n).$$\end{itemize}

\begin{theorem}\label{MCLXn0} Let $\{X_n, n\geq 0\}$ be a stationary process that satisfies assumptions \textbf{(A1)-(A2)}. Then the following sequence converges:
\beq\label{ClTZX}\frac{X_1+\cdots+X_n}{\sqrt{ n H( c_n)(1+\theta)/(1-\theta)}}\xrightarrow{d} \N(0,1)\eeq
(in distribution).
Here,  $\N(0,1)$ is a standard normal distribution, and $\theta,  c_n$ are chosen  according to \textbf{(A1)-(A2)}.
\end{theorem}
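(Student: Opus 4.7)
The plan is to combine the classical truncation trick with a Doob-type martingale approximation tailored to the linear conditional-expectation structure supplied by \textbf{(A2)}. First, I replace each $X_k$ by its truncation $X_{n,k}$. The first half of \textbf{(A1)} yields $\PP\bigl(\bigcup_{k\le n}\{X_k\ne X_{n,k}\}\bigr)\le n\PP(|X_1|\ge c_n)\to 0$, so $\sum_{k=1}^n X_k$ and $\sum_{k=1}^n X_{n,k}$ agree off a set of vanishing probability; the mean-zero assumption combined with slow variation of $H$ keeps the deterministic shift $\EE(X_{n,k})$ of smaller order than $\sqrt{H(c_n)}$, so it will be enough to establish the CLT for the truncated partial sums.

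Next, using \textbf{(A2)}, set
\[
Y_{n,k}:=X_{n,k}-\theta X_{n,k-1}-\cE_{n,k-1}, \qquad M_n:=\sum_{k=1}^n Y_{n,k}.
\]
By construction $\EE(Y_{n,k}\mid\cF_{k-1})=0$, so $\{M_n\}$ is an $\{\cF_k\}$-martingale. Summing the identity $X_{n,k}=Y_{n,k}+\theta X_{n,k-1}+\cE_{n,k-1}$ and telescoping gives
\[
(1-\theta)\sum_{k=1}^n X_{n,k} \;=\; M_n + \theta\bigl(X_{n,0}-X_{n,n}\bigr) + \sum_{k=0}^{n-1}\cE_{n,k}.
\]
The boundary terms are pointwise bounded by $c_n$, so vanish after division by $\sqrt{nH(c_n)}$ by the second condition in \textbf{(A1)}. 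For $E_n:=\sum_{k=0}^{n-1}\cE_{n,k}$, stationarity and \textbf{(A2)} give $\EE(E_n^2)=\sum_{|k|<n}(n-|k|)\cov(\cE_{n,0},\cE_{n,k})=\cO(n)$; since $X_0$ has infinite variance and $H$ is slowly varying, $H(c_n)\to\infty$, so $E_n/\sqrt{nH(c_n)}\to 0$ in $L^2$. It therefore suffices to prove $M_n/\sigma_n\Rightarrow\N(0,1)$ with $\sigma_n:=\sqrt{n(1-\theta^2)H(c_n)}$, because $(1+\theta)/(1-\theta)=(1-\theta^2)/(1-\theta)^2$ reconciles this with the normalization in \eqref{ClTZX}.

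A direct second-moment computation using stationarity together with $\EE\cE_{n,0}^2=\cO(1)$ and $\EE|X_{n,0}\cE_{n,0}|=\cO(1)$ and the definition of $H$ gives $\EE Y_{n,k}^2\sim(1-\theta^2)H(c_n)$, confirming the scaling. For the Hall--Heyde martingale CLT, the Lindeberg condition follows from the pointwise bound $|Y_{n,k}|\le(1+|\theta|)c_n+|\cE_{n,k-1}|$, using $c_n/\sigma_n\to 0$ and the uniform integrability of $\cE_{n,\cdot}^2$ obtained from stationarity.

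The main obstacle is the conditional-variance condition
\[
\frac{1}{\sigma_n^2}\sum_{k=1}^n \EE\bigl(Y_{n,k}^2\mid\cF_{k-1}\bigr)\;\xrightarrow{\PP}\;1,
\]
which is not an immediate consequence of \textbf{(A1)}--\textbf{(A2)}. My plan is to reduce it to a weak $L^2$ law of large numbers for the conditional variances, bounding $\var\bigl(n^{-1}\sum_k\EE(Y_{n,k}^2\mid\cF_{k-1})\bigr)$ by the same $\cO(n)$-covariance control postulated for $\cE$ in \textbf{(A2)}; alternatively one first replaces conditional squares by unconditional squares via a bracket-process estimate and then invokes stationarity of $\{X_{n,k}^2\}$. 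A mild additional weak-dependence hypothesis on $\{X_{n,k}^2\}$, standard in the hyperbolic applications targeted later in the paper, will close this step and complete the proof.
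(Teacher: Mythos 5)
Your route is essentially the paper's route: your $Y_{n,k}$ is literally the paper's martingale difference $Z_{n,k}=X_{n,k}-\EE(X_{n,k}\mid\cF_{k-1})$ (since by \textbf{(A2)} $\EE(X_{n,k}\mid\cF_{k-1})=\theta X_{n,k-1}+\cE_{n,k-1}$), the telescoped identity $(1-\theta)\sum_k X_{n,k}=M_n+\theta(X_{n,0}-X_{n,n})+\sum_k\cE_{n,k}$ is equation \eqref{XnMn2}, and the disposal of the boundary term and the $\cE$-sum via \eqref{choosecn} and the $\cO(n)$-covariance bound is the same. For the small-jump / Lindeberg condition, you argue from the pointwise bound on $|Y_{n,k}|$ plus $c_n/\sigma_n\to 0$; the paper instead proves a general claim that for a stationary triangular array $\eta_{n,k}$ with finite mean, $\tfrac1n\EE\max_{k\le n}\eta_{n,k}\to 0$, and applies it with $\eta_{n,k}=Z_{n,k}^2/H(c_n)$. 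Both are acceptable once one has uniform integrability of the relevant family, which is the same implicit ingredient.

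The gap you flag in the conditional-variance condition is genuine, and it is the right thing to flag. The paper's own proof of Proposition \ref{MCLZn} is weak precisely here: it computes only the expectations, showing
\[
\EE\Bigl(\sum_{k=1}^n \tZ_{n,k}^2\Bigr)\;=\;(1-\theta^2)+\frac{\sum_{k=0}^{n-1}C_{n,k}}{nH(c_n)}\;\longrightarrow\;1-\theta^2,
\]
and then asserts that this implies $\sum_k\tZ_{n,k}^2\to 1-\theta^2$ in probability. That inference is not valid: convergence of expectations of nonnegative random variables to a constant does not give convergence in probability (and it is not the same as $L^1$ convergence, which would suffice by Markov). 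One does need a variance estimate on $\sum_k\tZ_{n,k}^2$ (equivalently on $\sum_k\EE(\tZ_{n,k}^2\mid\cF_{k-1})$), i.e.\ some extra weak-dependence control on the array $\{X_{n,k}^2\}$ or on the conditional second moments, exactly as you say. In the dynamical applications the paper effectively supplies this through \textbf{(A3)} and \textbf{(B4)}, so the additional hypothesis you propose is not foreign to the paper's framework; but as written, \textbf{(A1)}--\textbf{(A2)} alone do not close item 2 of Lemma~\ref{Helland}, and both your proof and the paper's are incomplete at that step in the same way.

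Two smaller remarks. First, the mean of the truncated variable is a real loose end: $\EE X_{n,k}=\EE(X_0\bI_{|X_0|<c_n})$ need not vanish, and its contribution is absorbed into $\sum_k\cE_{n,k}$ via $\EE\cE_{n,0}=(1-\theta)\EE X_{n,0}$; the $\cO(n)$ covariance bound in \textbf{(A2)} controls the variance of this sum but not its mean, so one should additionally check $n\,\EE X_{n,0}=o(\sqrt{nH(c_n)})$. Your remark about the shift is pointing at the right issue, but ``slow variation of $H$'' does not by itself give the needed rate; it follows from the tail condition $n\PP(|X_0|\ge c_n)\to 0$ together with $\EE X_0=0$ under mild regularity, and deserves a line. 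Second, your Lindeberg argument appeals to ``uniform integrability of $\cE_{n,\cdot}^2$ obtained from stationarity''; stationarity in $k$ gives identical distribution within a row but not uniform integrability across rows $n$, so that too needs to be stated as a hypothesis (the paper's claim about $\tfrac1n\EE\max_k\eta_{n,k}$ has the same unstated requirement).
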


Next we prove the Invariance Principle.
Let $J=[0,1]$. Let $D(J)$ be the space of right continuous (with left limits ) real valued functions on $J$, endowed with the Skorohod $J_1$ topology:
that is, for any small $\rho>0$, we say two functions $u$ and $v$ in $D(J)$ are $\rho$-close if there exists
$\lambda:[0,1] \to [0,1]$ such that
\begin{itemize}
 \item[(1)]$\lambda(0)=0$, $\lambda(1)=1$, and $\lambda$ is increasing;
     \item[(2)] $\forall t\in [0,1]$, $|\lambda(t)-t|\leq \rho$;
\item[(3)] $\forall t\in[0,1], |u(\lambda(t))-v(\lambda(t))|\leq \rho$.
\end{itemize}
We  define a random function such that for any $t\in [0,1]$, $n\geq 1$,
\beq\label{Wt12}
W_n(X, t)=\sum_{k=1}^{[tn]}\frac{X_k}{\sqrt{n H(c_n)}},
\eeq
and we denote $W$ as the standard Brownian motion on $D(J)$.

\begin{theorem}\label{IP}
Suppose that $X_{n}$ satisfies assumptions \textbf{(A1)-(A2)}.
Then $W_n(X,\cdot)\to \sqrt{\frac{1+\theta}{1-\theta}} W(\cdot)$ weakly on $D(J)$ as $n\to\infty$.
\end{theorem}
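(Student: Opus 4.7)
The plan is to upgrade the scalar CLT of Theorem~\ref{MCLXn0} to a process-level limit by running the Doob martingale decomposition of (\textbf{A2}) uniformly in the time parameter $t\in J$, and then invoking a functional martingale central limit theorem in the spirit of Helland~\cite{H82}. As a preliminary reduction, (\textbf{A1}) gives $\PP\bigl(\max_{1\le k\le n}|X_k|\ge c_n\bigr)\le n\,\PP(|X_1|\ge c_n)\to 0$, so $W_n(X,\cdot)$ agrees with probability tending to one with its truncated analogue $\tilde W_n(t):=(nH(c_n))^{-1/2}\sum_{k=1}^{[tn]}X_{n,k}$, and it suffices to prove the invariance principle for $\tilde W_n$. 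Introduce the martingale differences
$$D_{n,k}:=X_{n,k}-\EE(X_{n,k}\mid\cF_{k-1})=X_{n,k}-\theta X_{n,k-1}-\cE_{n,k-1},$$
and rearrange to obtain, for $m=[tn]$, the key identity
$$(1-\theta)\sum_{k=1}^{m}X_{n,k}=\sum_{k=1}^{m}D_{n,k}+\sum_{k=0}^{m-1}\cE_{n,k}+\theta\bigl(X_{n,0}-X_{n,m}\bigr).$$

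The second and main step is to apply a functional MCLT to $M_n(t):=(nH(c_n)(1-\theta^2))^{-1/2}\sum_{k=1}^{[tn]}D_{n,k}$, yielding $M_n\Rightarrow W$ on $(D(J),J_1)$. The two sufficient hypotheses are the conditional variance convergence $(nH(c_n)(1-\theta^2))^{-1}\sum_{k=1}^{[tn]}\EE(D_{n,k}^2\mid\cF_{k-1})\xrightarrow{\PP}t$ for each $t\in J$, and the conditional Lindeberg condition $(nH(c_n))^{-1}\sum_{k=1}^{n}\EE\bigl(D_{n,k}^2\bI_{|D_{n,k}|>\varepsilon\sqrt{nH(c_n)}}\mid\cF_{k-1}\bigr)\xrightarrow{\PP}0$ for every $\varepsilon>0$. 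The endpoint case $t=1$ of the variance condition is already built into the proof of Theorem~\ref{MCLXn0}; stationarity extends it to every $t\in J$. The Lindeberg bound follows from $|D_{n,k}|\le(1+|\theta|)c_n+|\cE_{n,k-1}|$, the growth condition $c_n/\sqrt{nH(c_n)}\to 0$ from (\textbf{A1}), and $\EE(\cE_{n,0}^2)<\infty$.

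The third step is to show that, after dividing by $(1-\theta)\sqrt{nH(c_n)}$, the remainder terms in the key identity are uniformly negligible in $t$. By (\textbf{A2}), $\var\bigl(\sum_{k=0}^{m-1}\cE_{n,k}\bigr)=O(m)$ uniformly in $m\le n$; since $H(c_n)\to\infty$ (because $X_0$ has infinite variance and $H$ is slowly varying), a M\'oricz-type maximal $L^2$ inequality for stationary weakly-correlated sequences yields $\sup_{t\in J}\bigl|(nH(c_n))^{-1/2}\sum_{k=0}^{[tn]-1}\cE_{n,k}\bigr|\xrightarrow{\PP}0$. The boundary term is dominated by $2c_n/\sqrt{nH(c_n)}\to 0$. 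Combining these via Slutsky's theorem in $(D(J),J_1)$ gives
$$\tilde W_n(\cdot)=\frac{\sqrt{1-\theta^2}}{1-\theta}\,M_n(\cdot)+o_{\PP}(1)\Rightarrow\sqrt{\frac{1+\theta}{1-\theta}}\,W(\cdot),$$
which matches the constant in Theorem~\ref{MCLXn0} and completes the argument.

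The hard part, as in Theorem~\ref{MCLXn0}, is the conditional variance condition: the slowly-varying normaliser $H(c_n)$ and the truncation at $c_n$ interact with the stationary covariance structure, so one needs in-probability control of the partial-sum fluctuation of $\EE(D_{n,k}^2\mid\cF_{k-1})-H(c_n)(1-\theta^2)$ uniformly over $t\in J$ rather than only pointwise at $t=1$. A secondary obstacle is the Skorohod-uniform negligibility of the $\cE_{n,k}$-sum: the variance hypothesis in (\textbf{A2}) controls second moments but not maxima, so either a maximal-inequality argument for stationary weakly-correlated sequences, or a secondary martingale approximation applied to $\{\cE_{n,k}\}$ itself, will be required to promote the pointwise $L^2$ bound to the functional-uniform one.
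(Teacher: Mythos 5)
Your proposal follows the same skeleton as the paper's proof: truncate via (\textbf{A1}), perform the Doob-type martingale decomposition encoded in (\textbf{A2}), push the martingale part through a functional martingale CLT, and show that the coboundary-type and $\cE$-remainders are negligible. The paper invokes McLeish's two conditions (max-of-squares $\to 0$ in $L^1$ and sum-of-squares $\to t$ in probability, Lemma~\ref{martingaleIP}), while you substitute Helland-style conditional-variance plus conditional-Lindeberg conditions; these are equivalent in strength here and the difference is cosmetic. So the core route is the same.

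Where you go further than the paper is exactly the right place: the paper's proof disposes of the remainders by citing (\ref{Sntheta}), which is a scalar identity at time $n$, and never argues that $\sup_{t\in J}(nH(c_n))^{-1/2}\bigl|\sum_{k<[tn]}\cE_{n,k}\bigr|$ and $\sup_t(nH(c_n))^{-1/2}|X_{n,0}-X_{n,[tn]}|$ tend to zero in probability. You correctly flag this as the obstacle to be overcome. However, of the two fixes you propose, the M\'oricz-type $L^2$ maximal inequality does not quite close the gap: with only the hypothesis $\var\bigl(\sum_{k=0}^{m-1}\cE_{n,k}\bigr)=O(m)$ from (\textbf{A2}), the dyadic/M\'oricz bound produces $\EE\max_{m\le n}\bigl(\sum_{k<m}\cE_{n,k}\bigr)^2 = O\bigl(n(\log n)^2\bigr)$, and since in the intended applications $H(c_n)\asymp\log n$, the ratio $(\log n)^2/H(c_n)$ diverges. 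Your alternative suggestion, a secondary martingale-plus-coboundary decomposition of $\{\cE_{n,k}\}$ followed by Doob's $L^2$ maximal inequality (which carries no logarithmic loss), is the one that actually works; but it requires structure on $\{\cE_{n,k}\}$ beyond the bare covariance summability in (\textbf{A2}), such as the exponential decay of correlations that the paper later establishes in its applications (cf.\ the estimates around (\ref{exdecaycE1})--(\ref{exdecaycE2})). In short: you have correctly diagnosed a uniformity gap that the paper itself leaves implicit, and one of your two proposed repairs is sound, but the other would need strengthening.
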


Proofs of Theorem 1 and Theorem 2 can be found in Section \ref{profthm12}.

\subsection{Abnormal CLT for Dynamical Systems}

We consider a dynamical system $(T, \cM, \mu_{\cM})$ and assume that the map $T$ is  hyperbolic with singularity $\cS\subset \cM$, as defined by Katok and Strelcyn \cite{KS}, and $\mu_{\cM}$ is a mixing SRB measure. To investigate the statistical properties of $(T,\cM,\mu_{\cM})$, we introduce an induced system $(F, M)$. Let $\cD=\cup_{i\in I}\Omega_{i}$ (card$\,I<\infty$) be a finite union
of some connected components of $\cM\setminus \cS$. For any $x\in \cD$, let
\beq \label{defR}
   \mathcal{R}(x)=\min\{n\geq 1\colon T^n(x) \in \cD,\ \
   T^m(x)\notin \cS,\ m=1, \ldots, n-1\}
\eeq
be the first return time function. We denote by $\cN_1\subset\cD$ the set of
points that never return to $\cD$.  For each $n\geq 1$, the ``level'' set $\cD_n = \{\mathcal{R}(x)=n\} \subset \cD$
is open, and if $\cD_n \neq \emptyset$ then $T^n$ is a diffeomorphism
of $\cD_n$ onto $T^n(\cD_n) \subset \cD$. We denote by $F$ the first
return map, i.e.,
$$
   F(x) = T^n(x)\qquad \forall x\in \cD_n,\quad n\geq 1.
$$
It is easy to see that $F$ is a diffeomorphism of the open set
$\cD^{+}=\cup_{n\geq 1} \cD_n$ onto the open set $\cD^-=\cup_{n\geq 1}
T^n(\cD_n)$. The inverse map $F^{-1}$ is defined on $\cD^- \subset
\cD$ and maps it back to $\cD^+$. Let $M=\overline{\cD}$ denote the
closure of $\cD$, and for each $n\geq 1$ let
\beq\label{def:Mn} M_n = \overline{\cD}_n.\eeq

We set
\[
  S_1 = M\setminus\cD^+ = \cN_1\cap\partial \cD
\]
and
\[
  S_{-1} = M\setminus\cD^- = \cN_{-1}\cap\partial \cD,
\]
where $\cN_{-1}\subset \cD$ denotes the set of points that never return
to $\cD$ under the iterations of $T^{-1}$. We assume that both $S_1$
and $S_{-1}$ are finite or countable unions of smooth compact curves.
The sets $S_{\pm 1}$ play the role of singularities for the induced
maps $F^{\pm 1}$. We assume that the map $F$ restricted to any level
set $\cD_n$ can be extended by continuity to its boundary $\partial
\cD_n$, but the extensions to $\partial \cD_n \cap \partial \cD_m$ for
$n\neq m$ need not agree.

We assume that $\mu_{\cM}(\cD)>0$.  By the ergodicity of $\mu_{\cM}$ we have
\beq \label{cMMm}
  \cM = \bigcup_{n\geq 1}\bigcup_{m=0}^{n-1} T^m M_n\qquad {\rm (mod\ 0)}.
\eeq
The first return
map $F$ preserves the measure $\mu_{\cM}$ conditioned on $M$; we denote it by $\mu$.  We note that $\int_M \mathcal{R}\, d\mu=\mu_{\cM}(M)^{-1}$ by the Kac theorem.
The measure $\mu$ is ergodic, and we assume that it is also mixing.

We assume the support of $\mu$ has a measurable foliation $\cW^u$ of unstable manifolds of the map $F$, such that for any $W^u\in \cW^u$ and any $x,y\in W^u$, we have
\beq\label{A4} d(F^{-n}(x),F^{-n}(y))<C\eta^n,\eeq
for some constant $C>0$ and $\eta\in (0,1)$.

 Assume $\cF$ is the Borel $\sigma$-algebra on the region $M\subset \mathbb{R}^2$. Let $\cA_0=\sigma(\cR)$ be the smallest $\sigma$-algebra generated by the return time function $\cR$, or equivalently, generated by a (finite or) countable partition $\{\Omega_n, n\geq 1\}$ of $M\setminus S_1$. We define the $\sigma$-algebra $\cF_{0}=\sigma(\cR\circ F^{k}, k\leq 0)$, and let $\cF_{-1}$ be the trivial $\sigma$-algebra.   We define, for $n\geq 1$
 \beq\label{defFn}
 \cF_n=\sigma(\cR\circ F^k, k\in (-\infty, n]).
 \eeq
One can check that  $\{\cF_n, n\geq 0\}$ is an increasing filtration with
$$\cF_{n-1}\subset\cF_n\subset \cF.$$
To get a clear picture of $\cF_0$, note that any unstable manifold $W^u\in \cF_0$. Indeed $\cF_0$ is the smallest $\sigma$-algebra generated by the foliation $\cW^u$,   any observable that is constant on each unstable manifold must belong to $\cF_0$. In particular, one can check that for the return time function $\mathcal{R}\circ F^n\in \cF_n$, for $n\geq 0$.

For any observable $f:\cM\to \mathbb{R}$ with finite expectation, we define $\EE_n(f)=\EE(f|\cF_n)$ and  its induced function $\tf: M\to\mathbb{R}$ such that $\tf(x)=f(x)+\cdots+f(T^{n-1}x)$ for any $x\in M_n$.
 For any observable function $f$ on $\cM$, we  denote by $\S_n f$ and $S_n  \tf$ the Birkhoff sums of $f$ and the induced function $\tf$, respectively:
\beq\label{inducesum}
\cS_n  f =  f+ f\circ T+\cdots + f\circ T^{n-1},\,\,\,\,\,\,\text { and }\,\,\,\,\,\,S_n  \tf =  \tf+ \tf\circ F+\cdots + \tf\circ F^{n-1}.\eeq

 To prove the central limit theorem for the process $\{\tf\circ F^n\}$, we need to introduce the class of observables that we will use to study statistical properties.
For any $\gamma\in (0,1]$, let $\cH_{\gamma}$	be the set of all  functions $g\in L_{\infty}(\cM,\mu_{\cM})$  such that  for any $x,y\in \cM$, \beq \label{DHC--} 	|g(x) - g(y)| \leq \|g\|_{\gamma} d(x,y)^{\gamma},\eeq
where
$$\|g\|_{\gamma}:= \sup_{ x, y\in \cM}\frac{|g(x)-g(y)|}{d(x,y)^{\gamma}}<\infty.$$
If $\|g\|_{\infty}<\infty$, we also  define \beq\label{defCgamma}\|g\|_{C^{\gamma}}:=\|g\|_{\infty}+\|g\|_{\gamma}.\eeq

\begin{theorem}\label{MCLXn2}  Let $f\in\cH_{\gamma}$ with exponent $\gamma\in (0,1)$. We assume the process $X_n=\mathbb{E}_n(\tf\circ F^n)-\EE(\tf)$  satisfies assumptions \textbf{(A1)-(A2)}, and the process $\xi_{n,k}:=(\tf\cdot\bI_{|\tf|\leq c_n}-\E_0(\tf\cdot\bI_{|\tf|\leq c_n}))\circ F^k$ satisfies\\

\noindent(\textbf{A3})\, \, $ \sum_{k=0}^{m-1}(m-k)\text{\bf{Cov}}(\xi_{n,0}\cdot\xi_{n,k})=\cO(m),$ for any $m\leq n.$\\

Then the following sequence converges in distribution as $n\to\infty$:
\beq\label{ClTZf}\frac{S_n \tf-n\mu(\tf)}{\sqrt{\tfrac{1+\theta}{1-\theta}\cdot  n H( c_n)}}\xrightarrow{d} \N(0,1).\eeq
Here $\theta,  c_n$ are chosen as in \textbf{(A1)-(A2)}. \end{theorem}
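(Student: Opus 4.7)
My strategy is to apply Theorem~\ref{MCLXn0} to the conditional process $X_k := \mathbb{E}_k(\tilde f\circ F^k)-\mu(\tilde f)$ and control the residual separately. First, I observe shift-equivariance of the filtration: unravelling $\cF_n=\sigma(\cR\circ F^k,\,k\le n)$ shows that $h\in\cF_n$ implies $h\circ F^{-1}\in\cF_{n-1}$, and iteration together with $F$-invariance of $\mu$ gives $\mathbb{E}_n(\tilde f\circ F^n)=\mathbb{E}_0(\tilde f)\circ F^n$. Hence $\{X_k\}$ is stationary and $\cF_k$-adapted, and by hypothesis satisfies \textbf{(A1)}--\textbf{(A2)}; Theorem~\ref{MCLXn0} yields
\begin{equation*}
\frac{X_1+\cdots+X_n}{\sqrt{\tfrac{1+\theta}{1-\theta}\cdot nH(c_n)}}\xrightarrow{d}\N(0,1).
\end{equation*}
Since $\sum_{k=1}^n X_k = \sum_{k=0}^{n-1}[\mathbb{E}_0(\tilde f)-\mu(\tilde f)]\circ F^k + [\mathbb{E}_0(\tilde f)\circ F^n - \mathbb{E}_0(\tilde f)]$ and the last bracket is $O_P(1)$ (a tight sequence by stationarity), by Slutsky's theorem it suffices to show that
\begin{equation*}
\mathcal R_n := \sum_{k=0}^{n-1}[\tilde f - \mathbb{E}_0(\tilde f)]\circ F^k = o_P\bigl(\sqrt{nH(c_n)}\bigr).
\end{equation*}

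To handle $\mathcal R_n$ I truncate: let $\tilde f_n:=\tilde f\cdot\bI_{|\tilde f|\le c_n}$ and split
\begin{equation*}
\tilde f - \mathbb{E}_0(\tilde f) = \bigl[\tilde f_n - \mathbb{E}_0(\tilde f_n)\bigr] + \bigl[(\tilde f-\tilde f_n)-\mathbb{E}_0(\tilde f-\tilde f_n)\bigr] =: \xi_{n,0} + T_{n,0}.
\end{equation*}
The $\xi$-piece matches \textbf{(A3)} exactly, so by stationarity
\begin{equation*}
\var\Big(\sum_{k=0}^{n-1}\xi_{n,k}\Big) = n\var(\xi_{n,0}) + 2\sum_{k=1}^{n-1}(n-k)\cov(\xi_{n,0},\xi_{n,k}) = \cO(n),
\end{equation*}
so $\sum_k\xi_{n,k}=O_P(\sqrt n)$; infinite variance of $X_0$ combined with $X_0\in L^1$ forces $H(c_n)\to\infty$, which makes this contribution $o_P(\sqrt{nH(c_n)})$. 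For the tail sum $\sum_k T_{n,k}\circ F^k$, on the event $E_n=\{|\tilde f\circ F^k|\le c_n,\ 0\le k<n\}$ of $\mu$-measure at least $1-n\mu(|\tilde f|>c_n)$, the uncentered piece $(\tilde f-\tilde f_n)\circ F^k$ vanishes identically, while the $\cF_0$-compensator has $L^1$-norm at most $n\mu(|\tilde f|\bI_{|\tilde f|>c_n})$.

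The main obstacle lies in establishing the two tail estimates $n\mu(|\tilde f|>c_n)\to 0$ and $n\mu(|\tilde f|\bI_{|\tilde f|>c_n})=o(\sqrt{nH(c_n)})$. Neither is a formal consequence of \textbf{(A1)}--\textbf{(A3)}: assumption \textbf{(A1)} controls the tail of $X_0=\mathbb{E}_0(\tilde f)-\mu(\tilde f)$, not of $\tilde f$ directly. The required comparison is natural in the target applications, where $\tilde f$ is a Birkhoff sum of $f$ over the $\cF_0$-measurable return time $\cR$: on a long return cell $M_n$, $\tilde f$ grows linearly with $n$, and the conditional expectation $\mathbb{E}_0(\tilde f)$ captures the dominant tail; the unstable-manifold hypothesis \eqref{A4} quantifies the $\cF_0$-atom fluctuations through the H\"older regularity of $f$ on stable manifolds, and Karamata-type asymptotics for the slowly varying $H$ then upgrade the tail probability bound from \textbf{(A1)} into the required $L^1$ bound on $\tilde f$. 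Once these tail estimates are in place, Slutsky's theorem combines the limit for $\sum_k X_k$ with the $o_P(\sqrt{nH(c_n)})$ remainder to yield \eqref{ClTZf}.
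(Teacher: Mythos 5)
Your decomposition closely mirrors the paper's, but the bookkeeping is organized differently. The paper truncates $\tf$ first, reducing to $\tf_{n,k}=(\tf\cdot\bI_{|\tf|<c_n})\circ F^k$ via a Borel--Cantelli argument, then splits $\tf_{n,k}-\mu(\tf_{n,k})=X_{n,k}+\xi_{n,k}$ with $X_{n,k}=\EE_k(\tf_{n,k})-\mu(\tf_{n,k})$, uses (\textbf{A3}) on the $\xi$-sum, and invokes Theorem~\ref{MCLXn0} for $\{X_{n,k}\}$. You instead apply Theorem~\ref{MCLXn0} directly to the untruncated conditional process $X_k=\EE_0(\tf)\circ F^k-\mu(\tf)$ and confine the truncation to the residual $\tf-\EE_0(\tf)$. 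Your route is actually cleaner on one point: Theorem~\ref{MCLXn0} internally truncates as $X_k\cdot\bI_{|X_k|<c_n}$, whereas the paper's array $\EE_k((\tf\,\bI_{|\tf|<c_n})\circ F^k)-\mu(\cdot)$ is not of that form (truncating $\tf$ and then conditioning is not the same as conditioning and then truncating $X_0$), so the paper's final appeal to Theorem~\ref{MCLXn0} does not literally match its hypotheses. You apply the theorem to the process it was written for, which is tidier. The shift-equivariance identity $\EE_n(\tf\circ F^n)=\EE_0(\tf)\circ F^n$ that you derive is the paper's Lemma~\ref{lemma8}, proved the same way.

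The gap you flag is genuine, and it is present in the paper's own proof as well, just less visibly. Section~3 silently re-declares $H(t)=\var(\tf\cdot\bI_{|\tf|<t})$, assumes it slowly varying, and imposes $\lim_n n\,\mathbb{P}(|\tf|\geq c_n)=0$ as in (\ref{choosecnf}); none of this follows from (\textbf{A1}), which is stated for $X_0=\EE_0(\tf)-\mu(\tf)$, not for $\tf$. Moreover, when the paper replaces $n\mu(\tf)$ by $n\mu(\tf_{n,0})$ it tacitly needs $n\,|\EE(\tf\,\bI_{|\tf|\ge c_n})|=o(\sqrt{nH(c_n)})$, which is never verified either; this is exactly your second tail estimate. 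So what you have actually uncovered is a tacit comparability hypothesis (tails of $\tf$ versus tails of $X_0$) that the theorem should have stated, and which is satisfied in all the paper's applications via the uniform boundedness of $\xi_{n,0}$ together with Karamata asymptotics, precisely as you indicate. One small point worth recording: the boundary term $\EE_0(\tf)\circ F^n-\EE_0(\tf)$ is only tight, not $L^2$-bounded (since $X_0$ has infinite variance), but tightness together with $\sqrt{nH(c_n)}\to\infty$ is all that Slutsky needs, so your step is fine as written.
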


Next we prove the Invariance Principle and  form a random function such that
\beq\label{Wt4}
W_n(\tf,F,\mu, t)=\sum_{k=1}^{[tn]}\frac{\tf\circ F^k-\mu(\tf)}{\sqrt{n H(c_n)}},
\eeq
where we denote $W$ as the standard Brownian motion on $D(J)$.
\begin{theorem}\label{IPY}  Let $f\in\cH_{\gamma}$ with exponent $\gamma\in (0,1)$. We assume the process $X_n=\mathbb{E}_n(\tf\circ F^n)-\EE(\tf)$  and $\{\xi_{n,k}\}$ satisfy assumptions \textbf{(A1)-(A3)}. Then
\beq\label{WnIPt} W_n(\tf,F,\mu, \cdot)\to \sqrt{\tfrac{1+\theta}{1-\theta}}\cdot W(\cdot)\eeq
 weakly on $D(J)$ as $n\to\infty$.\end{theorem}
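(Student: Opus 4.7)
The plan is to reduce Theorem \ref{IPY} to Theorem \ref{IP} applied to the stationary auxiliary process $X_n=\mathbb{E}_n(\tf\circ F^n)-\EE(\tf)$. By hypothesis $X_n$ satisfies \textbf{(A1)--(A2)}, so Theorem \ref{IP} immediately yields
$$W_n(X,\cdot)\;\longrightarrow\;\sqrt{\tfrac{1+\theta}{1-\theta}}\,W(\cdot)\qquad\text{weakly on }D(J).$$
It then suffices to show that the discrepancy
$$D_n(t)\;:=\;W_n(\tf,F,\mu,t)-W_n(X,t)\;=\;\frac{1}{\sqrt{n H(c_n)}}\sum_{k=1}^{[tn]}\bigl(\tf\circ F^k-\mathbb{E}_k(\tf\circ F^k)\bigr)$$
converges to the zero process in probability, uniformly in $t\in[0,1]$; an application of Slutsky's lemma on the Skorohod space $D(J)$ then delivers \eqref{WnIPt}.

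To control $D_n$, I would truncate at level $c_n$, writing $g_n=\tf\cdot\bI_{|\tf|\leq c_n}$ and $h_n=\tf-g_n$. This decomposes
$$\tf\circ F^k-\mathbb{E}_k(\tf\circ F^k)\;=\;\xi_{n,k}\;+\;\eta_{n,k},$$
where $\xi_{n,k}=g_n\circ F^k-\mathbb{E}_k(g_n\circ F^k)$ is precisely the process in assumption \textbf{(A3)} (using $\mathbb{E}_0(g_n)\circ F^k=\mathbb{E}_k(g_n\circ F^k)$, which follows from the $F$-invariance of $\mu$ and the structure $\cF_k=\cF_0\circ F^k$), while $\eta_{n,k}=h_n\circ F^k-\mathbb{E}_k(h_n\circ F^k)$ is a ``tail'' correction. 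For the bounded piece, stationarity and \textbf{(A3)} give
$$\var\Bigl(\sum_{k=1}^m \xi_{n,k}\Bigr)=m\,\var(\xi_{n,0})+2\sum_{k=1}^{m-1}(m-k)\cov(\xi_{n,0},\xi_{n,k})=\cO(m)$$
uniformly in $n$ and $m\leq n$; after dividing by $n H(c_n)$ the variance tends to $0$, since $H(c_n)\to\infty$ (forced by the infinite-variance condition in \textbf{(A1)}). For the tail piece, the choice of $c_n$ in \eqref{choosecn} gives $n\,\mu(|\tf|\geq c_n)\to 0$, so with probability $1-o(1)$ none of the iterates $F^kx$, $k\leq n$, visits $\{|\tf|\geq c_n\}$ and every $\eta_{n,k}$ vanishes; the contribution from the exceptional event is absorbed using $c_n/\sqrt{n H(c_n)}\to 0$ together with a Chebyshev estimate.

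The principal obstacle is upgrading the $L^2$-bound on the terminal partial sum provided by \textbf{(A3)} to uniform-in-$t$ control of the partial sums $\{\sum_{k=1}^m \xi_{n,k}\}_{m\leq n}$, which is what is needed for tightness of $D_n$ in $D(J)$. Doob's martingale maximal inequality does not apply directly because the $\xi_{n,k}$ are not themselves $\{\cF_k\}$-martingale differences (indeed $\xi_{n,k}$ is not $\cF_k$-measurable). Two routes are available: (i) exploit the near-orthogonality inherited from \textbf{(A3)} via a classical stationary-sequence maximal inequality of Hajek--Renyi/Menshov--Rademacher type, which yields $\mathbb{E}(\max_{m\leq n}|\sum_{k=1}^m\xi_{n,k}|^2)=\cO(n(\log n)^2)$ and is still $o(n H(c_n))$ whenever $H(c_n)$ grows faster than $(\log n)^2$; or (ii), in the spirit of the proof of Theorem \ref{IP}, apply a further Doob-type decomposition to $\xi_{n,k}$ to isolate a genuine martingale-difference part, to which Doob's $L^2$ maximal inequality applies directly, and show that the remaining predictable remainder is uniformly negligible by a telescoping argument.
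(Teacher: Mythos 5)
Your proposal follows the same overall route as the paper: reduce to Theorem~\ref{IP} applied to the conditional-expectation process $\{X_n\}$, then show that the correction term vanishes in $D(J)$, splitting that correction into the truncated piece $\xi_{n,k}$ (controlled via \textbf{(A3)}) and a tail piece (controlled by the choice of $c_n$ in \eqref{choosecn} and Borel--Cantelli, which is exactly how the paper handles the tail in the proof of Theorem~\ref{MCLXn2}). Your observation that $\EE_k(g_n\circ F^k)=\EE_0(g_n)\circ F^k$ is precisely Lemma~\ref{lemma8}, and your identification of $\xi_{n,k}$ with the process in \textbf{(A3)} is correct. So the structure is the same as the paper's.

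The concern you raise about tightness is genuine, and it is not resolved by the paper, which dispatches the step with a single sentence (``Combining \textbf{(A3)} and Theorem~\ref{IP}, we obtain Theorem~\ref{IPY}''). Since the limit $W$ is continuous, weak convergence of $W_n(\tf,F,\mu,\cdot)$ in $D(J)$ requires the discrepancy to go to zero uniformly over $t\in[0,1]$ in probability, whereas \textbf{(A3)} only yields $\var\bigl(\sum_{k\le m}\xi_{n,k}\bigr)=\cO(m)$ for each $m\le n$, i.e.\ pointwise control of the partial-sum variance, not a maximal bound. You are also right that Doob's inequality cannot be invoked directly because the $\xi_{n,k}$ are not $\cF_k$-measurable, hence not martingale differences for the paper's filtration. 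One caveat on your route~(i): a Menshov--Rademacher bound of order $n(\log n)^2$ is in general not sufficient here, because in the paper's applications $H(c_n)\asymp c_{M,f}\log n$, so $(\log n)^2/H(c_n)$ does not tend to zero; this is why a genuinely stronger maximal inequality (via a further Doob-type decomposition, your route~(ii), or a reverse-martingale maximal inequality adapted to the backward measurability of $\xi_{n,k}$) is needed. Note also that the same omission is present in the paper's own proof of Theorem~\ref{IP}, where the uniform negligibility of $\sum_{k\le[nt]}\cE_{n,k}$ and of $X_{n,[nt]}-X_{n,0}$ is likewise asserted without a maximal argument. So your proposal is not weaker than the paper's proof; rather, it makes explicit a gap that the paper leaves implicit, and route~(ii) is the viable way to close it.
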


More generally, we have the following results for the original system $(\cM,T,\mu_{\cM})$.
For any observable $f:\cM\to \mathbb{R}$ with finite expectation, we denote the induced system by $(M,F,\mu)$ and the induced function by $\tf: M\to\mathbb{R}$.

\begin{theorem}\label{IPYf} Let $f$ be an observable on $\cM$ where $f\in\cH_{\gamma}$ with exponent $\gamma\in (0,1)$. We assume the process $X_n=\mathbb{E}_n(\tf\circ F^n)-\EE(\tf)$ and $\{\xi_{n,k}\}$ satisfy assumptions \textbf{(A1)-(A3)}. Then $$W_n(f,T,\mu_{\cM},\cdot)\to \sqrt{ \mu_{\cM}(M) \frac{1+\theta}{1-\theta}} \cdot W(\cdot)$$ weakly on $D(J)$ as $n\to\infty$.
\end{theorem}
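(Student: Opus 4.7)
The plan is to deduce Theorem~\ref{IPYf} from Theorem~\ref{IPY} by exploiting the Kac suspension structure linking $(\cM,T,\mu_\cM)$ to the induced system $(M,F,\mu)$. First, replace $f$ by $f-\mu_\cM(f)$ so that $\mu_\cM(f)=0$; the identity $\mu_\cM(f)=\mu_\cM(M)\,\mu(\tf)$, an immediate consequence of the partition \eqref{cMMm}, then gives $\mu(\tf)=0$. For $x\in M$ define the consecutive return times $\tau^{(j)}(x):=\sum_{i=0}^{j-1}\cR(F^ix)$ and the orbit count $N_n(x):=\max\{j\ge 0:\tau^{(j)}(x)\le n\}$. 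A direct telescoping argument yields
\[
 \cS_n f(x)=S_{N_n(x)}\tf(x)+R_n(x),\qquad |R_n(x)|\le\|f\|_\infty\,\cR(F^{N_n(x)}x),
\]
where $R_n$ is the partial excursion at the endpoint. Starting from a general $y\in\cM$ introduces only an additional $\cO(1)$ boundary shift (passing from $y$ to the most recent visit $\pi(y)\in M$), bounded by $\|f\|_\infty\,\cR(\pi(y))$.

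Next set $m_n:=[n\mu_\cM(M)]$. Applying Theorem~\ref{IPY} at the index $m_n$ gives
\[
 \frac{S_{[tm_n]}\tf}{\sqrt{m_n H(c_{m_n})}}\,\xrightarrow{d}\,\sqrt{\tfrac{1+\theta}{1-\theta}}\,W(t)
\]
weakly on $D(J)$. Since $H$ is slowly varying and $m_n/n\to\mu_\cM(M)$, the rescaling factor satisfies $\sqrt{m_nH(c_{m_n})/(nH(c_n))}\to\sqrt{\mu_\cM(M)}$, so renormalizing by $\sqrt{nH(c_n)}$ the limit becomes $\sqrt{\mu_\cM(M)(1+\theta)/(1-\theta)}\,W(t)$.

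It remains to replace the deterministic index $[tm_n]$ by the random count $N_{[tn]}$. The Birkhoff ergodic theorem for $\cR$ under $\mu$ yields $N_n/n\to\mu_\cM(M)$ $\mu$-a.e.; since $t\mapsto N_{[tn]}$ is monotone, this convergence is uniform, so the time change $\lambda_n(t):=N_{[tn]}/m_n$ tends to the identity uniformly on $[0,1]$. Skorokhod-$J_1$ weak convergence is preserved under composition with such $\lambda_n$, so
\[
 \frac{S_{N_{[tn]}}\tf}{\sqrt{nH(c_n)}}\,\xrightarrow{d}\,\sqrt{\mu_\cM(M)\tfrac{1+\theta}{1-\theta}}\,W(t).
\]
The remainder $R_{[tn]}$ is dominated by $\|f\|_\infty\,\cR(F^{N_{[tn]}}x)$; since $\cR$ is $\mu$-a.e.\ finite, this random variable is tight in $n$ and hence negligible after dividing by $\sqrt{nH(c_n)}\to\infty$. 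Combining these estimates with the suspension identity of the first step yields the theorem.

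The main technical obstacle is the random time-change argument. The models of interest have return times with heavy polynomial tails, so the absolute fluctuations $N_n-n\mu_\cM(M)$ typically exceed the diffusive scale $\sqrt{n}$ and a modulus-of-continuity estimate for the limiting Brownian motion is unavailable; fortunately only the \emph{relative} convergence $\lambda_n\to\mathrm{id}$ in the uniform sense is needed, and this is supplied by the ergodic theorem together with monotonicity. The delicate points will be verifying that the Skorokhod weak limit is genuinely preserved under this reparametrization, and that the partial-excursion remainder is uniformly tight in $t\in[0,1]$ (rather than merely pointwise in $t$).
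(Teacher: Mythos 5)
Your proposal follows the same overall route as the paper: pass to the induced system, invoke Theorem~\ref{IPY}, perform a random change of time driven by the return count, and show the boundary excursions are negligible. The two places where you flag delicate points --- preservation of the Skorokhod weak limit under the random reparametrization, and uniform-in-$t$ tightness of the excursion remainder --- are precisely what the paper handles by citing Billingsley's Theorem~14.4 (stated as Lemma~\ref{Nnstop}) and by the tail estimate $\mu\bigl([nt]-S_{n''}(\R,t)=k\bigr)\le\sum_{m\ge k}\mu(M_m)\le Ck^{-2}$, so those gaps are recoverable and you correctly identified them as the crux.

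However, there is one genuine gap that you do not flag. After projecting a $\mu_{\cM}$-typical $y\in\cM$ to the base $\pi(y)\in M$, the induced distribution of $\pi(y)$ is \emph{not} $\mu$ but the size-biased measure $\nu$ with $d\nu/d\mu=m\,\mu_{\cM}(M)$ on $M_m$ (the number of tower levels above $M_m$ enters). Theorem~\ref{IPY} delivers the WIP for $(M,F,\mu)$; to start from $\nu$-distributed points you need the further observation that, since $(F,\mu)$ is mixing and $\nu\ll\mu$, the weak invariance principle carries over to $\nu$ (the Eagleson-type argument the paper invokes after Theorem~\ref{MCLXn1}, and implicitly when asserting that the time-changed limit ``clearly'' holds with respect to $\nu$). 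Your write-up applies Theorem~\ref{IPY} directly as if $\pi(y)\sim\mu$, which is not the case, so this step needs to be supplied before the suspension decomposition $\cS_nf(y)=S_{N_n}\tf(\pi(y))+R_n(y)$ can be combined with the induced WIP. The slow-variation computation $m_nH(c_{m_n})/(nH(c_n))\to\mu_{\cM}(M)$ and the monotonicity argument for $\lambda_n\to\mathrm{id}$ are both correct.
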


As a consequence of the above invariance principle, we have the following central limit theorem.

\begin{theorem}\label{MCLXn1}  For any observable $f\in H_{\gamma}$ with $\gamma\in (0,1)$, we assume  assumptions \textbf{(A1)-(A3)}. Then the following sequence converges in distribution:
\beq\label{ClTZ}\frac{f+\cdots +f\circ T^{n-1}-n\mu_{\cM}(f)}{\sqrt{ \frac{1+\theta}{1-\theta}\cdot n H( c_n)\mu_{\cM}(M)}}\xrightarrow{d} \N(0,1).\eeq
\end{theorem}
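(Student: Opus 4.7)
The plan is to deduce Theorem \ref{MCLXn1} directly from the invariance principle of Theorem \ref{IPYf} by extracting its one-dimensional marginal at $t=1$ and reconciling the resulting cumulative sum with the Birkhoff sum $\cS_n f$.

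First I would verify that the hypotheses of Theorem \ref{IPYf} are exactly those assumed in Theorem \ref{MCLXn1}, namely $f\in\cH_{\gamma}$ with $\gamma\in(0,1)$ together with \textbf{(A1)-(A3)} for $X_n$ and $\xi_{n,k}$. Invoking Theorem \ref{IPYf} then gives
$$W_n(f,T,\mu_{\cM},\cdot)\;\Longrightarrow\;\sqrt{\mu_{\cM}(M)\tfrac{1+\theta}{1-\theta}}\,W(\cdot)$$
weakly on $D(J)$ in the Skorohod $J_1$ topology. Since standard Brownian motion has continuous sample paths almost surely, the evaluation functional $u\mapsto u(1)$ is continuous on the support of the limiting law, so the continuous-mapping theorem yields
$$W_n(f,T,\mu_{\cM},1)=\sum_{k=1}^{n}\frac{f\circ T^k-\mu_{\cM}(f)}{\sqrt{n H(c_n)}}\xrightarrow{d}\sqrt{\mu_{\cM}(M)\tfrac{1+\theta}{1-\theta}}\cdot\N(0,1).$$
Dividing through by the positive constant $\sqrt{\mu_{\cM}(M)(1+\theta)/(1-\theta)}$ casts this in the form required by (\ref{ClTZ}), up to replacing the time-shifted sum $\sum_{k=1}^{n}f\circ T^k$ by the Birkhoff sum $\cS_n f=\sum_{k=0}^{n-1}f\circ T^k$.

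To reconcile the two partial sums, I would observe that their difference is $f-f\circ T^n$, which is uniformly bounded by $2\|f\|_{\infty}$ because $\cH_{\gamma}\subset L_{\infty}(\cM,\mu_{\cM})$ by definition. Since $X_0$ has infinite variance by \textbf{(A1)}, the slowly varying function $H(t)=\var(X_0\bI_{|X_0|<t})$ tends to $+\infty$ as $t\to\infty$, and with $c_n\to\infty$ the normalization $\sqrt{n H(c_n)}$ diverges. Therefore $(f-f\circ T^n)/\sqrt{n H(c_n)}\to 0$ almost surely, and Slutsky's lemma upgrades the previous display to
$$\frac{\cS_n f-n\mu_{\cM}(f)}{\sqrt{\tfrac{1+\theta}{1-\theta}\cdot n H(c_n)\mu_{\cM}(M)}}\xrightarrow{d}\N(0,1),$$
which is exactly (\ref{ClTZ}).

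The argument is thus a short corollary of Theorem \ref{IPYf}, and no new estimates beyond those used there are required. The only conceptual point to verify is the continuity of the projection $\pi_1\colon u\mapsto u(1)$ in the $J_1$ topology, which fails at paths with a jump at $t=1$ but holds on the set of a.s.-continuous paths where the Brownian limit is supported; all the genuine work has already been carried out in proving the invariance principle, so the main obstacle for Theorem \ref{MCLXn1} lies upstream rather than here.
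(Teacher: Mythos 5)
Your proposal is correct and matches the paper's route: the paper presents Theorem \ref{MCLXn1} as an immediate corollary of the invariance principle Theorem \ref{IPYf}, and your continuous-mapping argument at $t=1$, together with the reconciliation of the index shift using $f\in L_\infty$ and $H(c_n)\to\infty$, makes that deduction explicit. (A minor aside: the evaluation $u\mapsto u(1)$ is in fact continuous on all of $D[0,1]$ in the $J_1$ topology, since any admissible time change satisfies $\lambda(1)=1$ and hence $|u(1)-v(1)|\le\rho$ for $\rho$-close $u,v$, so the restriction to the support of Brownian motion is unnecessary.)
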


\vspace{0.5cm}

The convergence in (\ref{ClTZ}) means precisely that for any $z\in \mathbb{R}$,
$$\mu_{\cM}\left(\frac{\cS_n f-n\mu_{\cM}(f)}{\sqrt{ \frac{1+\theta}{1-\theta}\cdot n H(c_n)\mu_{\cM}(M)}}<z\right)\to \int_{-\infty}^z \frac{1}{\sqrt{2\pi}} e^{-\frac{x^2}{2}} dx,$$ as $n\to\infty$. Since the map $(T,\mu_{\cM})$ is mixing, the limit law (\ref{ClTZ}) holds
   if we  replace $\mu_{\cM}$ with any probability measure that is absolutely continuous with respect
to $\mu_{\cM}$; see Section 4.2 in \cite{Eagleson}.

An analogue of Theorem \ref{MCLXn1} was proved for some
billiard models where correlations decay as $\cO(1/n)$ for the Bunimovich
stadium, the
Lorentz gas with infinite horizon, and billiards with cusps; see \cite{BCD,GB,SV07}. The main goal of this paper is to provide a new method using martingale approximation, and performing a  simplified and unified  study on the central limit theorem for systems with similar properties. Moreover, Theorem \ref{MCLXn1} provides the precise formula for the diffusion constants (i.e. coefficient of the term $n\ln n$). In particular, this allows us to obtain the diffusion constant for Lorentz gas with infinite horizon, which is new to our knowledge.

 \begin{proposition}\label{Thm3}   Theorem \ref{IPYf} follows from Theorem \ref{IPY}.
\end{proposition}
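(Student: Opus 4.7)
The plan is to lift the invariance principle from the induced system $(F, M, \mu)$ to the original system $(T, \cM, \mu_{\cM})$ by a standard time-change argument. Replacing $f$ with $f - \mu_{\cM}(f)$, we may assume $\mu_{\cM}(f) = 0$; Kac's identity $\mu(\tf) = \mu_{\cM}(f)/\mu_{\cM}(M)$ then forces $\mu(\tf) = 0$, which is essential to prevent a cocycle drift from appearing in the passage between the two systems. For $x \in M$, set $\tau_k(x) = \sum_{j=0}^{k-1} \cR(F^j x)$ and $n_N(x) = \max\{k \geq 0 : \tau_k(x) \leq N\}$, the number of returns of $x$ to $M$ by time $N$. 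Birkhoff's ergodic theorem applied to $\cR$ under $(F, \mu)$, combined with $\int_M \cR\, d\mu = 1/\mu_{\cM}(M)$, yields the uniform clock estimate
\[
\sup_{t \in [0,1]} \left| \frac{n_{[Nt]}(x)}{N} - t\, \mu_{\cM}(M) \right| \longrightarrow 0 \qquad \mu\text{-a.s.}
\]

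Next, I would decompose the Birkhoff sum into complete returns plus a boundary piece,
\[
\cS_{[Nt]} f(x) = S_{n_{[Nt]}(x)} \tf(x) + R_N(t, x), \qquad |R_N(t, x)| \leq \|f\|_\infty \max_{0 \leq k \leq N} \cR(F^k x).
\]
The tail bound (\ref{choosecn}) applied to $\cR$ gives $c_N = o(\sqrt{N H(c_N)})$ with $\max_{k \leq N} \cR(F^k x)/c_N$ tight, so $\sup_t |R_N(t, x)|/\sqrt{N H(c_N)} \to 0$ in $\mu$-probability. Set $n' = [N\mu_{\cM}(M)]$; Theorem \ref{IPY} gives $W_{n'}(\tf, F, \mu, \cdot) \to \sqrt{(1+\theta)/(1-\theta)}\, W(\cdot)$ weakly on $D(J)$, and slow variation of $H$ yields $\sqrt{n' H(c_{n'})}/\sqrt{N H(c_N)} \to \sqrt{\mu_{\cM}(M)}$. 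Composing with the uniformly convergent random time change $t \mapsto n_{[Nt]}(x)/n'$ via the Skorohod continuous-mapping / time-change theorem therefore delivers
\[
\frac{S_{n_{[Nt]}(x)} \tf(x)}{\sqrt{N H(c_N)}} \longrightarrow \sqrt{\mu_{\cM}(M) \cdot \tfrac{1+\theta}{1-\theta}}\; W(t) \qquad \text{weakly on } D(J) \text{ under } \mu.
\]

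The final step transfers from $\mu$ on $M$ to $\mu_{\cM}$ on $\cM$ using the tower representation (\ref{cMMm}). Define $\pi: \cM \to M$ by $\pi(T^j y) = y$ for $y \in M_n$, $0 \leq j < n$; $T$-invariance of $\mu_{\cM}$ gives $\pi_* \mu_{\cM} = \mu_{\cM}(M)\, \cR\, d\mu$, a probability measure absolutely continuous with respect to $\mu$. The process $Y_N := \cS_{[N\cdot]} f / \sqrt{N H(c_N)}$ on $M$ satisfies $Y_N \circ F - Y_N \to 0$ in $\mu$-probability (the difference is a bounded $f$-sum over at most $\cR$ terms, divided by $\sqrt{N H(c_N)}$), so Eagleson's theorem (as invoked after Theorem \ref{MCLXn1}) transfers the weak convergence from $\mu$ to any probability measure absolutely continuous with respect to $\mu$, in particular to $\pi_* \mu_{\cM}$. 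Since $|\cS_{[Nt]} f(T^j y) - \cS_{[Nt]} f(y)| \leq 2\|f\|_\infty \cR(y)$ and $\cR \in L^1(\mu)$ give $\cR(y)/\sqrt{N H(c_N)} \to 0$ in $\pi_*\mu_{\cM}$-probability, this lifts to convergence of $W_n(f, T, \mu_{\cM}, \cdot)$ to the claimed limit under $\mu_{\cM}$ on all of $\cM$. The hard part will be handling the boundary remainder $R_N$ and the tower-ascent discrepancy simultaneously, uniformly on $[0,1]$ in the Skorohod $J_1$ topology; both estimates ultimately reduce to the integrability and heavy-tail control of the return time $\cR$ provided by (A1).
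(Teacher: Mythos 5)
Your proposal is correct and follows essentially the same route as the paper: decompose the $T$-Birkhoff sum as an $F$-Birkhoff sum of $\tf$ evaluated at the random number of returns plus a boundary remainder, control the clock via Kac/Birkhoff (where the paper instead invokes Billingsley's random-change-of-time theorem, Lemma~\ref{Nnstop}, with $N_n/n\to a$), kill the remainder using the $\cO(m^{-3})$ tail of $\cR$, and pass from $\mu$ on $M$ to $\mu_{\cM}$ on $\cM$ via the tower projection $\pi$. The only cosmetic difference is that you make the measure-change step explicit through Eagleson's theorem, whereas the paper asserts directly that the invariance principle holds under the tower measure $\nu=\pi_*\mu_{\cM}$ and then exploits the isometry with $(\cM,\mu_{\cM})$.
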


The proof of Proposition \ref{Thm3} can be found in Section \ref{Prop7}. Thus, it is enough for us to prove Theorem \ref{IPY} for the induced systems.

In Section \ref{profthm12}, we prove a central limit theorem for mixing stationary processes by using martingale approximations. These are inspired by and have direct
applications to the nonuniformly hyperbolic
systems we subsequently study more in-depth. The processes have weak dependence and special
assumptions on their conditional expectations that arise naturally in these systems.  Theorem \ref{MCLXn2} and Theorem \ref{IPY} are proved  in Section 3.  Section 5 lists some sufficient conditions for estimating diffusion coefficients that will be used in our applications. In Section 6, we study certain classes of nonuniformly hyperbolic billiards and
show that we can apply the main results, especially  Theorem \ref{A5con} to each of these systems. Although we only concentrate on stadia, billiards with cusps, and semi-dispersing billiards, the main theorems have legitimate applications to rather general hyperbolic systems.

Throughout this paper we will use the following conventions:
positive and finite global constants whose value is unimportant will be denoted by $c$, $c_1, c_2$, ... or
$C$, $C_1$, $C_2$, ...., etc.	 These letters
may denote different values in different equations
throughout the paper.  Let $d(\,,\,)$ be the distance
function on $ \cM \times \cM $ induced by the Riemannian
metric  in $ \cM $. For any smooth curve  $W$ in $ \cM $, we
denote by $|W|$ its Lebesgue length.  For any measurable set $A\subset \cM$, we denote
$\bI_{A}$ as the indicator function of the set $A$. Given two sequences $A_n$ and $B_n$, we use the notation $A_n\asymp B_n$ to indicate that there exists two uniform constants $C_1<C_2$ such that $C_1B_n<A_n<C_2B_n$.\\

\section{Proof of Theorem 1 and Theorem 2}\label{profthm12}

\subsection{Martingale approximations}

In this section, we consider the stationary process $\{X_n\}$ adapted to the filtration $\{\cF_n\}$, with $\EE(X_n)=0$.
We say that a random variable $X$ {\it{belongs to the domain of attraction of the normal distribution}} if there exists $b_n>0$ such that $$\frac{X_1+\cdots +X_n }{b_n}\to \N(0,1)$$
in distribution as $n\to\infty$.

Note that assumption \textbf{(A1)} provides a sufficient condition for the stationary process $\{X_n\}$ to be in the domain of attraction of the normal distribution with $a_n=0$, so our first goal is to find $b_n$.

We  let $\bI_A$ denote the indicator of an
event $A$. To overcome the difficulty of the infinite variance property, we next define a triangular array $ \{X_{n,k}\,:\, n\geq 1, k=0, \ldots,n \}$ adapted to $\{\cF_{n,k}, n\geq 1, k=0, \ldots, n\}$, such that
$$X_{n,k}= (X_k \cdot \bI_{|X_k|< c_n}),\,\,\,\,\cF_{n,k}=\cF_k.$$
Clearly, for each $n\geq 0$, the row of random variables $\{X_{n,k}, k=0, \ldots, n\}$ are identically distributed with
\beq\label{EEVar} \var(X_{n,k})=H(c_n).\eeq

The sequence $\{X_{n,k}, k=0, \ldots, n\}$ is said to admit a co-boundary if there is a stationary sequence of martingale differences $Z_k$ and another stationary process $d_k$ for which $X_{n,k}=Z_k+d_k-d_{k-1}$ for all $k=1,\ldots, n$, in which case $S_n=M_n+Y_n$, with $S_n=\sum_{k=1}^n X_{n,k}$, $M_n=\sum_{k=1}^n Z_k$, and $Y_n=d_n-d_0$. Here, $M_n$ is a martingale.
Next, we construct a simple martingale difference approximation for the process $\{X_n\}$.

\begin{lemma}\label{martingale} There exists a  strictly stationary, ergodic martingale difference process array $\{Z_{n,k}, n\geq 1, k=1, \ldots, n\}$ with respect to the filtration $\{\cF_n, n\geq 0\}$ such that $X_{n,k}=Z_{n,k}+Y_{n,k}$, with $Y_{n,k}:=\EE(X_{n,k}|\cF_{k-1})$.\end{lemma}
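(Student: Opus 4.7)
The proof is essentially an application of Doob's decomposition to the triangular array; the formulas for $Z_{n,k}$ and $Y_{n,k}$ are already prescribed in the statement, so the real task is to verify the martingale property, strict stationarity, and ergodicity.

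The plan is to set $Z_{n,k} := X_{n,k} - Y_{n,k} = X_{n,k} - \EE(X_{n,k}\mid \cF_{k-1})$. First I would check adaptedness and the martingale-difference property. Since $X_{n,k} = X_k \cdot \bI_{|X_k|<c_n}$ is a Borel function of $X_k$ and $\{X_k\}$ is adapted to $\{\cF_k\}$, we get $X_{n,k}\in \cF_k$. The conditional expectation $Y_{n,k}$ is $\cF_{k-1}$-measurable, hence $\cF_k$-measurable, so $Z_{n,k}\in\cF_k$. Taking conditional expectations gives $\EE(Z_{n,k}\mid \cF_{k-1}) = \EE(X_{n,k}\mid\cF_{k-1}) - Y_{n,k} = 0$, which is the martingale-difference relation with respect to $\{\cF_k\}$. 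Integrability of $Z_{n,k}$ follows from boundedness of $X_{n,k}$ by $c_n$.

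Next I would establish strict stationarity. The sequence $\{X_{n,k}\}_{k}$ is strictly stationary for each fixed $n$ because it is obtained by applying the same truncation $x\mapsto x\bI_{|x|<c_n}$ to the stationary sequence $\{X_k\}$. For the conditional expectations $Y_{n,k}$, I invoke assumption \textbf{(A2)}: it gives
\[
 Y_{n,k} = \EE(X_{n,k}\mid\cF_{k-1}) = \theta X_{n,k-1} + \cE_{n,k-1},
\]
and since $\{X_{n,k-1}\}_{k}$ and $\{\cE_{n,k-1}\}_{k}$ are stationary (the former by the previous step, the latter by hypothesis), the sequence $\{Y_{n,k}\}_k$ is stationary. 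Therefore $\{Z_{n,k}\}_k = \{X_{n,k} - Y_{n,k}\}_k$ is a jointly stationary difference of stationary sequences, i.e.\ strictly stationary in $k$ for each fixed $n$.

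For ergodicity I would argue as follows. The stationary process $\{X_k\}$ is ergodic (as already assumed implicitly through $X_k = f\circ T^k$-type applications and used in Theorem~\ref{MCLXn0} via the CLT); since $Z_{n,k}$ is a measurable factor of the $\cF_k$-adapted stationary process determined by $(X_j, \cE_{n,j})_{j\le k}$, it inherits ergodicity from the underlying shift. More concretely, any factor of an ergodic stationary process obtained by a shift-equivariant measurable map is ergodic, and the map $k\mapsto Z_{n,k}$ is shift-equivariant by construction.

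The part that most deserves attention is the stationarity step, because $Y_{n,k}$ is defined through a conditional expectation and, without further structure on the filtration, conditional expectations need not respect the shift. Assumption \textbf{(A2)} circumvents this difficulty by furnishing an explicit stationary representation of $\EE(X_{n,k}\mid\cF_{k-1})$; once that formula is in hand, the rest of the lemma is a routine unwinding of definitions.
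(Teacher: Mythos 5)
Your proof takes essentially the same approach as the paper's: define $Z_{n,k}=X_{n,k}-\EE(X_{n,k}\mid\cF_{k-1})$, check that $Z_{n,k}\in\cF_k$, and verify $\EE(Z_{n,k}\mid\cF_{k-1})=0$ by the tower/idempotence property. You go further than the paper on the stationarity of $\{Y_{n,k}\}$ (which the paper merely asserts with "one can check"), correctly observing that a general filtration need not make conditional expectations shift-invariant and that assumption \textbf{(A2)} supplies the needed stationary representation $Y_{n,k}=\theta X_{n,k-1}+\cE_{n,k-1}$; this is a genuine improvement in rigor, not a different route.
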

\begin{proof}

We define, for $n\geq 1$ and $k=1,\ldots,n$,
\[Y_{n,k}:=\EE(X_{n,k}|\cF_{k-1}).\]
Then $\{Y_{n,k}, k=1, \ldots, n\}$ is a predictable process adapted to the filtration $\{\cF_{k-1}, k=1, \ldots, n\}$.
We also define a stationary sequence
\beq\label{defZn}Z_{n,k}=X_{n,k}-\EE(X_{n,k}|\cF_{k-1}).\eeq
In addition,  we define  $Z_{0,0}=X_0$.
One can check that this process is stationary. Note that,  in the probability space $(\Omega,\mathbb{P},\cF)$,  $\{Z_{n,k}, k=1, \ldots, n\}$ is  adapted to the filtration $\{\cF_{k}, k\geq 0\}$. Moreover,  we have
\begin{align*}
\EE(Z_{n,k}|\cF_{k-1})&=\EE(X_{n,k}-\EE(X_{n,k}|\cF_{k-1}))|\cF_{k-1})\\
&=\EE(X_{n,k}|\cF_{k-1}) - \EE(\EE(X_{n,k}|\cF_{k-1})|\cF_{k-1})=0.\end{align*}
Thus, $\{Z_{n,k}\}$ is a martingale difference sequence array. The ergodicity of $\{Z_{n,k}\}$ follows from that of $\{X_n\}$.

Moreover, we can check that for any $n\in \mathbb{Z}$, $k=1,\cdots, n$, we have
$$X_{n,k}= Z_{n,k}+Y_{n,k}.$$
\end{proof}

By Lemma \ref{martingale}, $\{Z_{n,k}, n\geq 1, k=1, \ldots, n\}$ is a strictly stationary, ergodic martingale difference array with respect to the filtration $\{\cF_k, k\geq 0\}$. Moreover, by the stationary property and assumption (\textbf{A2}), we have for any $k\geq 1$ that
\beq\label{Xnktheta}\EE(X_{n,k+1}|\cF_k)=\theta X_{n,k}+\cE_{n,k}.\eeq
 We define the partial sums \[S_n=\sum_{i=1}^n X_{n,i}\] and \[M_n=\sum_{i=1}^n Z_{n,i}.\] We then have
\beq\label{XnMn12}
(1-\theta)S_n=M_n+\theta(X_{n,0}-X_{n,n})+\sum_{k=0}^{n-1}\cE_{n,k}.
\eeq

 Our next goal is to investigate the CLT for the martingale difference $\{Z_{n,k}\}$.
  The following lemma was proved by McLeish in \cite{Mc7}.

\begin{lemma}\label{Helland} Let $\{\tZ_{n,k}\}$ be a martingale difference array adapted to the filtration array $\{\cF_{n,k}\}$. For $n\to\infty$, assume there exists $\sigma>0$ such that:
\begin{enumerate}
\item $\EE(\max_{1\leq k\leq n} |\tZ_{n,k}|^2)\to 0$;
\item $\sum_{k=1}^n \tZ_{n,k}^2\to \sigma^2$, in probability.
\end{enumerate}
Then $\tZ_{n,1}+\cdots+\tZ_{n,n}\to \N(0,\sigma^2)$ in distribution.
\end{lemma}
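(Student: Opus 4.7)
The plan is to follow McLeish's classical characteristic-function scheme: show that $\phi_n(t):=\EE\,e^{itS_n}\to e^{-\sigma^2 t^2/2}$ for every $t\in\RR$, where $S_n:=\sum_{k=1}^n\tZ_{n,k}$, and then invoke L\'evy's continuity theorem. The algebraic backbone is the factorization
\[
e^{itS_n}=T_n(t)\cdot R_n(t),\qquad T_n(t):=\prod_{k=1}^n\bigl(1+it\tZ_{n,k}\bigr),\quad R_n(t):=\prod_{k=1}^n\frac{e^{it\tZ_{n,k}}}{1+it\tZ_{n,k}},
\]
which is legitimate since $|1+ix|\ge 1$ for real $x$. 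The first observation I would exploit is that the partial products of $T_n(t)$ form a complex-valued martingale adapted to $\{\cF_{n,k}\}$: from $\EE[\tZ_{n,k}\mid\cF_{n,k-1}]=0$ one gets $\EE[1+it\tZ_{n,k}\mid\cF_{n,k-1}]=1$, and hence $\EE\,T_n(t)=1$ for every $n$.

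Next I would analyse $R_n(t)$ via Taylor expansion. For real $x$ of small modulus, $\log(1+ix)=ix-x^2/2+O(|x|^3)$; summing over $k$ gives
\[
\log R_n(t)=-\tfrac{t^2}{2}\sum_{k=1}^n\tZ_{n,k}^2+O\!\Big(|t|^3\sum_{k=1}^n|\tZ_{n,k}|^3\Big).
\]
Hypothesis~(2) forces the quadratic sum to $\sigma^2$ in probability, and the cubic remainder is dominated by $\max_k|\tZ_{n,k}|\cdot\sum_k\tZ_{n,k}^2$, which vanishes in probability by (1) together with (2). Hypothesis~(1) also validates the expansion itself: on the event $\{\max_k|\tZ_{n,k}|\le\eps\}$, whose probability tends to one, the Taylor remainder is uniformly controlled. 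Consequently $R_n(t)\to e^{-\sigma^2 t^2/2}$ in probability.

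The main obstacle is converting the in-probability limit of $R_n(t)$ together with $\EE\,T_n(t)=1$ into $\EE[T_n(t)R_n(t)]\to e^{-\sigma^2 t^2/2}$: the na\"ive bound $|T_n(t)|^2\le\exp(t^2\sum_k\tZ_{n,k}^2)$ does not supply uniform integrability, since the quadratic variation is only bounded in probability. I would fix this by first replacing $\tZ_{n,k}$ with the truncated, re-centred array
\[
\tZ^{(\eps)}_{n,k}:=\tZ_{n,k}\bI_{|\tZ_{n,k}|\le\eps}-\EE\bigl[\tZ_{n,k}\bI_{|\tZ_{n,k}|\le\eps}\,\big|\,\cF_{n,k-1}\bigr],
\]
which is still a martingale difference array with $|\tZ^{(\eps)}_{n,k}|\le 2\eps$; by (1) the partial sums of $\tZ_{n,k}-\tZ^{(\eps)}_{n,k}$ are negligible in probability and the hypotheses (1)--(2) are inherited. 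Next I would stop at $\tau_n:=\min\{j\le n:\sum_{k\le j}(\tZ^{(\eps)}_{n,k})^2>2\sigma^2\}\wedge n$; by (2) $\PP(\tau_n<n)\to 0$, and on the complementary event $\sum_{k\le\tau_n}(\tZ^{(\eps)}_{n,k})^2\le 2\sigma^2+4\eps^2$, which gives the deterministic bound $|T_{n\wedge\tau_n}(t)|^2\le\exp\!\bigl(t^2(2\sigma^2+4\eps^2)\bigr)$. That is the uniform integrability I need; piecing the truncation, stopping, and Taylor estimate together yields $\EE[T_n(t)R_n(t)]\to e^{-\sigma^2 t^2/2}$, completing the proof.
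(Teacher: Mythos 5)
The paper itself does not prove this lemma; it cites McLeish \cite{Mc7}, so what you have written is a proof that the paper omits. Your plan is McLeish's characteristic-function argument, which is the right approach: the factorization $e^{itS_n}=T_n(t)R_n(t)$, the martingale identity $\EE\,T_n(t)=1$, the Taylor expansion of $R_n$, and a truncation/stopping device to obtain uniform integrability. Two points need attention.

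First, a sign slip: for real $x$ one has $\log(1+ix)=ix+\tfrac{x^2}{2}+O(|x|^3)$ (plus, not minus); your displayed expansion is wrong, although the resulting formula $\log R_n(t)=-\tfrac{t^2}{2}\sum_k\tZ_{n,k}^2+O(|t|^3\sum_k|\tZ_{n,k}|^3)$ is correct.

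Second, and more substantively, the assertion that \emph{``by (1) the partial sums of $\tZ_{n,k}-\tZ^{(\eps)}_{n,k}$ are negligible in probability''} is not a consequence of (1) alone and cannot be obtained by bounding the second moment of that martingale. Writing $D_{n,k}:=\tZ_{n,k}-\tZ^{(\eps)}_{n,k}=\tZ_{n,k}\bI_{|\tZ_{n,k}|>\eps}-\EE[\tZ_{n,k}\bI_{|\tZ_{n,k}|>\eps}\mid\cF_{n,k-1}]$, the natural estimate gives $\EE\bigl(\sum_kD_{n,k}\bigr)^2\le\sum_k\EE[\tZ_{n,k}^2\bI_{|\tZ_{n,k}|>\eps}]$, and this last quantity need not tend to zero under (1)--(2). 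For example, let $B_n\in\cF_{n,0}$ with $\PP(B_n)=1/n$; on $B_n$ let $\tZ_{n,1},\dots,\tZ_{n,n}$ be independent $\pm 1$ coin flips, and on $B_n^c$ let $\tZ_{n,k}=\pm\sigma/\sqrt n$. Then $\EE[\max_k\tZ_{n,k}^2]=1/n+(1-1/n)\sigma^2/n\to 0$ and $\sum_k\tZ_{n,k}^2\to\sigma^2$ in probability, so (1)--(2) hold, yet $\sum_k\EE[\tZ_{n,k}^2\bI_{|\tZ_{n,k}|>\eps}]=n\cdot\PP(B_n)=1$ for all large $n$. The claim that $\sum_kD_{n,k}=o_{\PP}(1)$ is in fact true, but to prove it you must also use condition (2) together with a stopping-time and Lenglart-type maximal estimate: stop $\sum_{k\le j}\tZ_{n,k}^2$ when it first exceeds $2\sigma^2$, note that the stopped predictable quadratic variation $\sum_{k\le\tau}\EE[D_{n,k}^2\mid\cF_{n,k-1}]$ has the same expectation as $\sum_{k\le\tau}\tZ_{n,k}^2\bI_{|\tZ_{n,k}|>\eps}$, which vanishes on $\{\max_j|\tZ_{n,j}|\le\eps\}$ and is dominated by $2\sigma^2+\max_j\tZ_{n,j}^2$ whose mean stays bounded by (1); this gives $L^1$-convergence of the compensator to zero, and Lenglart's inequality then delivers the desired $o_{\PP}(1)$. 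McLeish's own route is slightly different and avoids this complication: he does not perform the $\eps$-truncation at all, but stops the original array at $\tau_n:=\min\{j:\sum_{k\le j}\tZ_{n,k}^2>2\sigma^2\}\wedge n$, so that $\sum_{k\le\tau_n}\tZ_{n,k}^2\le 2\sigma^2+\max_j\tZ_{n,j}^2$ almost surely; isolating the last factor gives $|T_{n\wedge\tau_n}(t)|^2\le(1+t^2\max_j\tZ_{n,j}^2)\,e^{2\sigma^2 t^2}$, which (1) keeps bounded in $L^1$. This yields an $L^2$-bound on $T_{n\wedge\tau_n}(t)$, hence uniform integrability, with no extra truncation error to control. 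Either fix works; as written, your argument has a genuine gap at the truncation step.
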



In order to relax the requirement to a martingale difference sequence with infinite variance we need to prove an abnormal central limit theorem.

\begin{proposition}\label{MCLZn} Let $\{Z_{n,k}\}$ be a martingale difference array defined as in (\ref{defZn}) and adapted to the filtration array $\{\cF_{n,k}\}$.
Then, for $n\to\infty$ and any $k=1, \ldots, n$,
\[
\frac{Z_{n,1}+\cdots +Z_{n,n}}{{\sqrt{ (1-\theta^2) nH( c_n) }}}\to \N(0,1)\]
in distribution.
\end{proposition}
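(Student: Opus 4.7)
The plan is to apply McLeish's martingale central limit theorem (Lemma \ref{Helland}) to the normalized row
$$\tilde Z_{n,k} := \frac{Z_{n,k}}{\sqrt{(1-\theta^2)\,nH(c_n)}},\qquad 1\le k\le n,$$
which is a martingale difference array with respect to $\{\cF_k\}$ by Lemma \ref{martingale}. Verifying the two hypotheses of that lemma, with limiting variance $\sigma^2=1$, then gives the result.

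Condition (1) should be immediate from the truncation: $|X_{n,k}|\le c_n$ together with conditional Jensen yields $|Z_{n,k}|\le 2c_n$, so
$$\mathbb{E}\Bigl(\max_{1\le k\le n}\tilde Z_{n,k}^2\Bigr)\le \frac{4c_n^2}{(1-\theta^2)\,nH(c_n)}\longrightarrow 0$$
by the second limit in \textbf{(A1)}.

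For condition (2) we must establish $\sum_{k=1}^n\tilde Z_{n,k}^2\to 1$ in probability. I would first confirm convergence in mean by a direct calculation: using $Z_{n,k}=X_{n,k}-\theta X_{n,k-1}-\cE_{n,k-1}$ from \textbf{(A2)} and the $L^2$-orthogonality of $Z_{n,k}$ to every $\cF_{k-1}$-measurable variable, stationarity reduces the row expectation to
$$\mathbb{E}(Z_{n,k}^2)=(1-\theta^2)H(c_n)-2\theta\,\mathbb{E}(X_{n,0}\cE_{n,0})-\mathbb{E}(\cE_{n,0}^2),$$
and since $H(c_n)\to\infty$ while the two correction terms are finite by \textbf{(A2)}, this yields $\mathbb{E}[\sum_k\tilde Z_{n,k}^2]\to 1$.

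The main obstacle is upgrading this first-moment statement to convergence in probability. The plan is a Chebyshev argument applied to the decomposition
$$\sum_{k=1}^n Z_{n,k}^2=\sum_k X_{n,k}^2-2\theta\sum_k X_{n,k}X_{n,k-1}+\theta^2\sum_k X_{n,k-1}^2+\sum_k\cE_{n,k-1}^2-2\sum_k(X_{n,k}-\theta X_{n,k-1})\cE_{n,k-1},$$
treating each block separately. The $\cE$-blocks are controlled by the covariance-sum bound in \textbf{(A2)}, which gives $\mathrm{Var}\bigl(\sum_k\cE_{n,k-1}^2\bigr)=\mathcal{O}(n)$ and, combined with $|X_{n,k}|\le c_n$ and Cauchy--Schwarz, also handles the mixed terms; after normalization by $(nH(c_n))^2$ these contributions are $o(1)$. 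For the pure $X$-sums, the diagonal contribution to the variance is $\mathcal{O}(c_n^2\cdot nH(c_n))$, so after normalization it is $\mathcal{O}(c_n^2/(nH(c_n)))$, which vanishes by \textbf{(A1)}; the off-diagonal contributions should be reduced either by iterating the AR(1) relation in \textbf{(A2)} to express $X_{n,k}^2-H(c_n)$ as a martingale difference plus a telescoping coboundary (so that the $\cE$-covariance bound again applies) or by a filtration-based decoupling. This off-diagonal control of the purely quadratic $X$-sum is the most delicate step, since neither \textbf{(A1)} nor \textbf{(A2)} acts on it directly; the AR(1) coboundary representation appears to be the cleanest route, as it funnels the problem back to the covariance estimate of \textbf{(A2)}.
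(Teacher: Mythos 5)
Your overall plan mirrors the paper's: apply Lemma~\ref{Helland} (McLeish) to $\tZ_{n,k}=Z_{n,k}/\sqrt{nH(c_n)}$ and verify the two hypotheses, with limiting variance $1-\theta^2$ before rescaling. For condition~(1), your argument is actually cleaner than the paper's. You use the hard truncation $|X_{n,k}|\le c_n$, conditional Jensen to get $|Z_{n,k}|\le 2c_n$, and then the second limit in \textbf{(A1)} directly. The paper instead establishes (and applies to $\eta_{n,k}=Z_{n,k}^2/H(c_n)$) an abstract claim that $\frac1n\EE\max_{k\le n}\eta_{n,k}\to0$ for stationary triangular arrays with finite mean; as written, that claim's proof involves a double limit in $N$ and $n$ and tacitly requires a uniform-integrability input that is never spelled out. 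Your bound $\EE(\max_k\tZ_{n,k}^2)\le 4c_n^2/((1-\theta^2)nH(c_n))$ sidesteps all of that.

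For condition~(2), you have put your finger on a genuine soft spot. The paper computes $\sum_{k}\EE(\tZ_{n,k}^2)\to 1-\theta^2$ and then asserts that $\sum_k\tZ_{n,k}^2\to 1-\theta^2$ ``in mean'' and hence in probability. But convergence of expectations does not, by itself, give $L^1$ convergence or convergence in probability of a nonnegative sequence to a constant; one needs a second-moment (or conditional-variance, or uniform-integrability) control that the paper's proof does not supply. So your instinct that upgrading the first-moment statement is ``the main obstacle'' is exactly right, and the paper does not in fact resolve it within this proof. That said, your proposed Chebyshev route as outlined does not close the gap either. You invoke the covariance-sum bound of \textbf{(A2)} to claim $\mathrm{Var}\bigl(\sum_k\cE_{n,k-1}^2\bigr)=\cO(n)$, but \textbf{(A2)} bounds $\sum_k(n-k)\cov(\cE_{n,0},\cE_{n,k})$, i.e.\ $\mathrm{Var}\bigl(\sum_k\cE_{n,k}\bigr)$, not the variance of the sum of \emph{squares}; controlling $\mathrm{Var}\bigl(\sum_k\cE_{n,k}^2\bigr)$ requires fourth-moment-type covariance information that \textbf{(A1)}--\textbf{(A2)} do not provide. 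The same objection applies to the diagonal and off-diagonal $X^2$ and cross terms: nothing in \textbf{(A1)}--\textbf{(A2)} constrains covariances of \emph{squares}, so neither the AR(1)-coboundary trick nor Chebyshev can be run to completion from the stated hypotheses alone. In short, you correctly diagnosed the weak link in the paper's verification of condition~(2), but closing it requires additional assumptions (or an ergodic-average-uniformity argument) beyond what either the paper or your proposal currently supplies.
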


\begin{proof}

To apply the last lemma, we choose $ c_n$ according to (\ref{choosecn}) and
 define an array
\[
\tZ_{n,k}=\frac{Z_{n,k}}{\sqrt{nH( c_n) }}
\]
for $n\geq 1$ amd $k=1, \ldots, n$. We claim that  $\{\tZ_{n,k}, n\geq 1, k=1, \ldots, n\}$ is a martingale difference array that satisfies the assumptions of Lemma \ref{Helland}.
Let $\cF_{n,k}=\cF_k$ for any $n\geq 1$ and $k=1, \ldots, n$. Clearly, $\tZ_{n,k}$ is $\cF_{n,k}$ measurable and, for each fixed $n\geq 1$, $\{\tZ_{n,k}, k=1, \ldots, n\}$ is a martingale difference sequence.
Moreover,
\begin{align*}
|\tZ_{n,k}|^2&=\frac{1}{nH( c_n) }\left(|X_{n,k}-\EE(X_{n,k}|\cF_{k-1})|^2\right).\end{align*}
By assumption, we know that $\EE( |\tilde Z_{n,k}|^2)<\infty$.

We claim that, for any stationary triangular array $\eta_{n,k}$ with finite mean,
\beq\label{claim}\lim_{n\to\infty}\frac{1}{n}\EE\left(\max_{1\leq k\leq n} \eta_{n,k}\right)=0.\eeq
To prove this claim, first choose a large $N>1$. We denote
$$\eta_{n,k}=\eta_{n,k}\cdot\bI_{\eta_{n,k}<N}+\eta_{n,k}\cdot \bI_{\eta_{n,k}\geq N}$$ and $\xi_{n,k}=\eta_{n,k}\cdot \bI_{\eta_{n,k}>N}$.  Note that
\begin{align*}
\EE\left(\max_{1\leq k\leq n}\xi_{n,k}\right)&=\int_{0}^{\infty}\mathbb{P}\left(\max_{1\leq k\leq n}\xi_{n,k}>x\right)\,dx\\
&\leq \sum_{k=1}^n \int_{0}^{\infty}\mathbb{P}(\xi_{n,k}>x)\,dx\\
&=n \int_{0}^{\infty}\mathbb{P}(\xi_{n,1}>x)\,dx=n\EE(\xi_{n,1}).
\end{align*}
Thus, we have
\[
\frac{1}{n}\EE\left(\max_{1\leq k\leq n}\eta_{n,k}\right)\leq \frac{N}{n}+\EE(\xi_{n,1})=\frac{N}{n}+\EE(\eta_{n,1}\cdot\bI_{\eta_{n,1}>N}).
\]
Since $\EE(\eta_{n,1})<\infty$, and we can choose $N$ to be arbitrarily large, the right hand side of the above equation approaches $0$ as $n\to\infty$. This completes the proof of claim (\ref{claim}).

Now, take $\eta_{n,k}=Z_{n,k}^2/H(c_n)$. Then $\EE(\eta_{n,k})<\infty$ and
\[
\lim_{n\to\infty}\frac{1}{n}\EE\left(\max_{1\leq k\leq n}\eta_{n,k}\right)=0.
\]
Therefore,
\[
\EE\left(\max_{1\leq k\leq n}\tilde Z_{n,k}^2\right)=\frac{1}{n}\EE\left(\max_{1\leq k\leq n}\eta_{n,k}\right)\to 0
\]
as $n\to\infty$, which implies that $\EE(\max_{1\leq k\leq n}\tilde Z_{n,k}^2)$ is uniformly bounded and verifies item 1\ of Lemma \ref{Helland}.

Note that (\ref{Xnktheta}) implies that, for any $k\geq 0$,
\[
\EE(X_{n,k+1}|\cF_k)=\theta X_{n,k}+\cE_{n,k}.
\]
We then have
\begin{align*}
nH(c_n)\EE(\tZ_{n,k+1}^2)&=\EE(\EE_k(Z_{n,k+1}^2))\\
&=\EE(\EE_k(X_{n,k+1}^2-2X_{n,k+1}\EE_k(X_{n,k+1})+\EE_k(X_{n,k+1})^2))\\
&=\EE(X_{n,k+1}^2)-\EE(\EE_k(X_{n,k+1}))^2\\
&=\EE(X_{n,k+1}^2)-\EE((\theta X_{n,k}+\cE_{n,k})^2)\\
&=(1-\theta^2)H(c_n)+C_{n,k},\end{align*}
where we use the fact that $H( c_n)=\EE(X_{n,k}^2)=\EE(X_{n,k+1}^2)$, and we have denoted
\[
C_{n,k}=2\theta\EE(\cE_{n,k}\cdot X_{n,k})+\EE(\cE_{n,0}^2)=2\theta\EE(\cE_{n,0}\cdot X_0\cdot \bI_{|X_0|<c_n})+\EE(\cE_{n,0}^2).
\]
It follows from (\textbf{A2}) that
$C_{n,k}$ is uniformly bounded. This leads to the following equality:
\begin{align}\label{Hbarcn}
\sum_{k=1}^n\EE\left(\tZ_{n,k}^2\right)=\EE\left(\sum_{k=1}^{n}\EE_k( \tZ_{n,k}^2)\right)&= (1-\theta^2)+\frac{\sum_{k=0}^{n-1}C_{n,k}}{nH( c_n) },\end{align}
which implies that, as $n\to\infty$,
\[
\sum_{k=1}^n\tZ_{n,k}^2\to 1-\theta^2>0
\]
in mean. Therefore, the sequence also converges in probability. This verifies item 2.\ of Lemma \ref{Helland}.

We can now apply Lemma \ref{Helland} and obtain that, as $n\to\infty$,
\[
\sum_{k=1}^n \tZ_{n,k}\to \N(0,1-\theta^2)
\]
in distribution.
\end{proof}
\subsection{Proof of Theorem \ref{MCLXn0} and Theorem \ref{IP}.}

Let $\{Z_{n,k}, n\geq 0, k=1,\ldots, n\}$ be the martingale difference array defined as in (\ref{defZn}).
We denote by $M_n=Z_{n,1}+
\cdots+Z_{n,n}$ the corresponding martingale adapted to the filtration $\{\cF_n, n\geq 1\}$. Now we can use Proposition \ref{MCLZn}, which implies that,
 as $n\to \infty$,
$$\frac{M_n}{{\sqrt{ (1-\theta^2)nH( c_n)}}}\to \N(0,1)$$
in distribution.

 According to assumptions \textbf{(A1)-(A2)}, we know that the partial sum $S_n$ of $\{X_{n,k}, k=1, \ldots, n\}$ can be written in terms of a martingale $M_n$ and an error term:
\beq\label{XnMn15}
S_n:=X_{n,1}+\cdots+ X_{n,n} =\sum_{i=1}^n \EE(X_{n,i}|\cF_{i-1})+Z_{n,1}+\cdots+ Z_{n,n}=M_n+ \theta\sum_{i=0}^{n-1} X_{n,i}+\sum_{i=0}^{n-1}\cE_{n,i}.
\eeq
This implies that
\beq\label{XnMn2}
(1-\theta)S_n=M_n+ \sum_{i=0}^{n-1}\cE_{n,i}+\theta(X_{n,n}-X_{n,0}).
\eeq

Therefore, as $n\to\infty$,
\begin{align}\label{Sntheta}
\frac{(1-\theta)S_n+\theta (X_{n,n}-X_{n,0})-\sum_{i=0}^{n-1} \cE_{n,i}}{{\sqrt{(1-\theta^2) nH( c_n)}}}=\frac{M_n}{{\sqrt{(1-\theta^2) nH( c_n)}}}\to \N(0,1)
\end{align}
in distribution.

One can easily check that $$ \frac{\theta(X_{n,n}-X_{n,0})}{{\sqrt{ (1-\theta^2) nH( c_n) }}}\to 0$$ in $L_1$ and, thus, also in distribution.
Moreover, we claim that
\[\frac{1}{\sqrt{(1-\theta^2) nH( c_n)}}\sum_{i=0}^{n-1} \cE_{n,i}\to 0
\]
in distribution. We use assumption \textbf{(A2)}, which implies that
\[
\var\left(\sum_{i=0}^{n-1} \cE_{n,i}\right)=n\var(\cE_{n,0})+2\sum_{k=1}^{n-1}(n-k)\cov(\cE_{n,0}\cdot\cE_{n,k})=\cO(n).
\]
Furthermore, $\lim_{n\to\infty} H(c_n)=\infty$ since $X_0$ has infinite variance.
  This implies that
$$\frac{1}{\sqrt{(1-\theta^2) nH( c_n)}}\sum_{i=0}^{n-1} \cE_{n,i}\to 0$$
in $L_2$.

We have shown that
\begin{align*}
\frac{(1-\theta)S_n}{\sqrt{(1-\theta^2) nH( c_n)}}\to \N(0,1)
\end{align*}
in distribution.
This implies that \begin{align*}
\frac{S_n}{\sqrt{ nH( c_n)(1+\theta)/(1-\theta)}}\to \N(0,1)
\end{align*}
in distribution.
In particular, this also implies that
$$\frac{1}{\sqrt{  nH( c_n)(1+\theta)/(1-\theta)}} \sum_{k=1}^n X_k \cdot (\bI_{|X_{k}|< c_n})\to \N(0,1)$$
in distribution.

Note that, by (\ref{choosecn}),
\[
\sum_{k=1}^n \mathbb{P}(X_k\neq X_k\cdot (\bI_{|X_{k}|< c_n}))\leq \sum_{k=1}^n\mathbb{P}( |X_{0}|> c_n)= n \mathbb{P}(|X_{0}|> c_n)\to 0
\]
as $n\to\infty$.
By the Borel-Cantelli Lemma, we have
\[\mu(X_k\neq X_k\cdot \bI_{|X_{k}|< c_n},\,\, \text{i.o.}) = 0.
\]
Combining the above facts, we have show that
\[
\frac{1}{\sqrt{ nH( c_n)(1+\theta)/(1-\theta) }} \sum_{k=1}^n X_k\to \N(0,1)
\]
in distribution, as $n\to\infty$.  This completes the proof of the Theorem  \ref{MCLXn0}.

\vspace{0.7cm}

To prove Theorem \ref{IP}, we use the following Lemma by McLeish in \cite{Mc7}.
\begin{lemma}\label{martingaleIP}
Suppose $\tZ_{n,k}$ is a martingale difference array satisfying\\
(a) $\lim_{n\to\infty}\EE(|\max_{1\leq k\leq [nt]} \tZ_{n,k}|^2)= 0$;\\
(b) $\lim_{n\to\infty}\sum_{n=1}^{[nt]}\tZ^2_{n,k}=t$ for each $t\in J$.\\
Then $W_n(\tZ,\cdot)\to W$ weakly on $D(J)$.
\end{lemma}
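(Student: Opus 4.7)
The stated lemma is McLeish's functional central limit theorem for martingale difference arrays, and in the paper it will be invoked as a black box from \cite{Mc7}. Still, here is the natural route I would take to derive it from the scalar martingale CLT (Lemma \ref{Helland}) combined with the standard weak-convergence machinery on $D(J)$.

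The plan is the classical two-step program: (i) convergence of the finite-dimensional distributions of $W_n(\tZ, \cdot)$ to those of standard Brownian motion, and (ii) tightness of the laws of $W_n(\tZ, \cdot)$ on $D(J)$ in the Skorokhod $J_1$ topology. Taken together, these force weak convergence to $W$.

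For (i), I would fix a grid $0 = t_0 < t_1 < \cdots < t_m \le 1$ and reduce joint convergence to one-dimensional convergence via the Cram\'er--Wold device. For real coefficients $a_1,\ldots,a_m$, the linear combination $\sum_j a_j\bigl(W_n(\tZ, t_j) - W_n(\tZ, t_{j-1})\bigr)$ equals $\sum_{k=1}^{[nt_m]} \tilde Y_{n,k}$ with $\tilde Y_{n,k} = a_{j(k)} \tZ_{n,k}$, where $j(k)$ is the unique index satisfying $[nt_{j(k)-1}] < k \le [nt_{j(k)}]$. The array $\{\tilde Y_{n,k}\}$ is again a martingale difference array for $\{\cF_{n,k}\}$; condition (a) transfers to $\tilde Y$ up to the harmless factor $\max_j a_j^2$, while condition (b) gives $\sum_k \tilde Y_{n,k}^2 \to \sum_j a_j^2 (t_j - t_{j-1})$ in probability. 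Lemma \ref{Helland} then yields a centered Gaussian limit whose variance matches the Brownian covariance, which proves (i).

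For (ii), Doob's $L^2$ maximal inequality applied to the martingale $W_n(\tZ, \cdot)$ yields, for every stopping time $\tau$ taking values in $\{k/n : k \le [n(1-\delta)]\}$,
\begin{equation*}
\PP\left(\sup_{s \in [\tau, \tau + \delta]} \bigl|W_n(\tZ, s) - W_n(\tZ, \tau)\bigr| > \eta\right) \le \frac{4}{\eta^2}\, \EE\left[\sum_{k = [n\tau]+1}^{[n(\tau+\delta)]} \tZ_{n,k}^2\right].
\end{equation*}
Condition (b), applied at times $\tau$ and $\tau+\delta$, bounds the right-hand side by $(4\delta + o(1))/\eta^2$, which verifies Aldous' tightness criterion; condition (a) further forces the jump sizes of $W_n(\tZ, \cdot)$ to vanish in probability, so every subsequential weak limit is supported on the continuous-path subspace $C(J) \subset D(J)$. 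Combining (i) and (ii) identifies the unique limit as $W$. The main obstacle is the tightness step, where one must control the modulus of continuity uniformly over stopping times by playing (a) and (b) off each other; the finite-dimensional convergence in (i) is a mechanical reduction to Lemma \ref{Helland} via Cram\'er--Wold.
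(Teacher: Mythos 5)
The paper does not prove this lemma at all; it is stated as a known result and invoked as a black box with the citation to McLeish \cite{Mc7}, exactly as you anticipate in your opening sentence. So there is no paper proof to compare against, and your proposal is best judged on its own merits as a reconstruction.

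Your blueprint --- finite-dimensional distributions via Cram\'er--Wold plus the scalar martingale CLT (Lemma \ref{Helland}), followed by tightness on $D(J)$ --- is indeed the standard route (it is essentially what Helland, Hall--Heyde, and McLeish himself do), and step (i) is carried out correctly: the recoloured array $\tilde Y_{n,k}=a_{j(k)}\tZ_{n,k}$ is again a martingale difference array, (a) transfers with a factor $\max_j a_j^2$, and (b) yields the right limiting variance, so Lemma \ref{Helland} delivers the Gaussian increments.

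The genuine gap is in the tightness step. After applying Doob's maximal inequality and orthogonality of martingale increments you are left with the quantity $\EE\bigl[\sum_{k=[n\tau]+1}^{[n(\tau+\delta)]}\tZ_{n,k}^2\bigr]$, and you assert this is $\le \delta+o(1)$ ``by condition (b) applied at times $\tau$ and $\tau+\delta$.'' This does not follow directly for two reasons. First, (b) is convergence \emph{in probability}, not in $L^1$: to get the displayed expectation to converge you need a uniform integrability argument for $\sum_k\tZ_{n,k}^2$, which is usually supplied by a truncation of the $\tZ_{n,k}$ at some level $\eps$ --- this is precisely where hypothesis (a) is used, and your sketch never actually invokes it in step (ii). Second, $\tau$ is a \emph{random} stopping time, so ``(b) at time $\tau$'' is not a statement of (b); one must discretize the possible stopping intervals onto a deterministic grid of mesh $\delta$, apply (b) at grid points, and then take a maximum over the (order $1/\delta$) grid cells before arguing about expectations. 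Your closing remark correctly identifies the tightness step as the obstacle, but the argument as written silently treats it as resolved when it is not. With the truncation and grid discretization inserted the proposal becomes a correct reconstruction of McLeish's theorem.
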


Let $\{Z_{{n,k}}, n\geq 0, k=1,\ldots, n\}$ be the martingale difference array defined as in (\ref{defZn}).
We denote by $M_n=Z_{{n,1}}+
\cdots+Z_{{n,n}}$ the corresponding martingale adapted to the filtration $\{\cF_n, n\geq 1\}$.

Note that the two conditions in Lemma \ref{martingaleIP} are very similar to those of Lemma \ref{Helland}. Consequently, the verification of conditions (a) and (b) is almost identical to those in Lemma \ref{Helland}, which we will omit here.
 Thus, for $n\to \infty$,  we have
  $W_n(Z,t)\to W(t)$ weakly on $D(J)$:
\[
W_n(Z,t)=\sum_{k=1}^{[nt]}\frac{Z_{{n,k}}}{\sqrt{n \var{Z_{{n,k}}}} } =\frac{M_{[nt]}}{{\sqrt{ (1-\theta^2)n H( c_{n})}}} \to W(t).
\]
Using (\ref{Sntheta}), as $n\to\infty$ we know that
\beq\label{XnMn1}
 W_n(X,t)=\sigma\sum_{k=1}^{[nt]}\frac{X_{{n,k}}}{n\sigma \var{X_{{n,k}}}} \to\sigma W(t),
\eeq
weakly on $D(J)$ as $n\to\infty$,
where $\sigma=\sqrt{ \frac{1+\theta}{1-\theta}}$. This completes the proof of Theorem \ref{IP}.

\section{Proof of Theorem \ref{MCLXn2} and Theorem \ref{IPY}.}
We consider the filtration $\{\cF_n, n\geq 0\}$ as defined in (\ref{defFn}) and its remark.
For any observable $f:\cM\to \mathbb{R}$,  we define its induced function $\tf: M\to\mathbb{R}$, such that
\[
\tf(x)=f(x)+\cdots+f(T^{n-1}x)\]
 for any $x\in M_n$, with $n\geq 1$. Let  $\EE_k(\tf)=\EE(\tf|\cF_k)$, for $k\geq 0$.

  Let  $H(t):=\var (\tf\cdot \bI_{ |\tf|<t})$, we assume $H(t)$ is a slowly varying function at infinity. Let  $c_n\in (0,+\infty]$, with $\lim c_n\to\infty$, such that
\beq\label{choosecnf}\lim_{n\to\infty} n \mathbb{P}(|\tf|\geq c_n)=0,\,\,\,\,\,\,\,\lim_{n\to\infty}\frac{c_n}{\sqrt{n H(c_n)}}=0.\eeq

Note that, by (\ref{choosecn}),
\[
\sum_{k=1}^n \mathbb{P}(\tf\circ F^k\neq \tf\circ F^k\cdot (\bI_{|\tf\circ F^k|< c_n}))\leq \sum_{k=1}^n\mathbb{P}( |\tf|> c_n)= n \mathbb{P}(|\tf|> c_n)\to 0
\]
as $n\to\infty$.
By the Borel-Cantelli Lemma, we have
\[\mu(\tf\neq \tf\cdot \bI_{|\tf|< c_n},\,\, \text{i.o.}) = 0.
\]
Combining the above facts, we have show that
$$
\frac{1}{\sqrt{ nH( c_n) }} \sum_{k=1}^n \tf\circ F^k
-\frac{1}{\sqrt{  nH( c_n)}} \sum_{k=1}^n (\tf\cdot (\bI_{|\tf|< c_n}))\circ F^k \to 0$$
convergence to zero in distribution, as $n\to\infty$.

We define $\tf_{n,k}=(\tf\cdot\bI_{|\tf|<c_n})\circ F^k$ for any $n\geq 1$ and $k=0,\cdots, n$. Above analysis shows that it is enough to consider the central limit theorem for this triangle array.

We consider the stochastic process $$ X_{n,k}:=\EE_k(\tf_{n,k})-\mu_M(\tf_{n,k})$$
for $n\geq 0$.

 We first show that for any fixed $n\geq 1$, $\{X_{n,k}, k=0,\cdot,n\}$ is a stationary sequence.

\begin{lemma}\label{lemma8} For $n\geq 1$ and any random variable $h\in L_1(M,\mu)$,
\beq\label{stationary}
\EE(h\circ F^{n}|\cF_n)=\EE(h|\cF_0)\circ F^n.
\eeq
In particular, for any $n\geq 1$,  $\{X_{n,k}, k=0,\cdots, n\}$ defines a stationary process.
\end{lemma}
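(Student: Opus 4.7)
The plan is to reduce both the identity~\eqref{stationary} and the stationarity claim to a single structural observation about the filtration, namely that
\[
\cF_n = F^{-n}(\cF_0) \qquad \text{for all } n \geq 0.
\]
This is immediate from the definition~\eqref{defFn}: since $\cR\circ F^k = (\cR\circ F^{k-n})\circ F^n$, the generators $\{\cR\circ F^k : k\leq n\}$ of $\cF_n$ are exactly the $F^{-n}$-pullbacks of the generators $\{\cR\circ F^j : j\leq 0\}$ of $\cF_0$, so the two $\sigma$-algebras they generate must agree.

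Granted this, I would prove~\eqref{stationary} in the usual two-step manner. First, $\EE(h\mid \cF_0)\circ F^n$ is $\cF_n$-measurable because $\EE(h\mid \cF_0)$ is $\cF_0$-measurable and composition with $F^n$ sends $\cF_0$-measurable functions into $F^{-n}(\cF_0) = \cF_n$-measurable ones. Second, for any $A\in\cF_n$, writing $A = F^{-n}(B)$ with $B\in\cF_0$ so that $\bI_A = \bI_B\circ F^n$, the $F$-invariance of $\mu$ yields
\[
\int_A h\circ F^n \, d\mu = \int (\bI_B\cdot h)\circ F^n\, d\mu = \int_B h\, d\mu = \int_B \EE(h\mid \cF_0)\, d\mu = \int_A \EE(h\mid \cF_0)\circ F^n\, d\mu.
\]
By the defining property of conditional expectation this establishes~\eqref{stationary}.

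For the stationarity of $\{X_{n,k}\}_{k=0}^{n}$, set $g_n := \tf\cdot\bI_{|\tf|<c_n}$, so that $\tf_{n,k} = g_n\circ F^k$. Applying~\eqref{stationary} with $h = g_n$ gives
\[
\EE_k(\tf_{n,k}) = \EE(g_n\circ F^k\mid \cF_k) = \EE(g_n\mid \cF_0)\circ F^k,
\]
while $F$-invariance of $\mu$ makes $\mu(\tf_{n,k}) = \mu(g_n)$ independent of $k$. Hence
\[
X_{n,k} = \bigl(\EE(g_n\mid \cF_0) - \mu(g_n)\bigr)\circ F^k =: \Phi_n\circ F^k,
\]
and since $\mu$ is $F$-invariant, $\{X_{n,k}\}_{k=0}^{n}$ is strictly stationary as the orbit of the single observable $\Phi_n$ under a measure-preserving map.

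The only real obstacle is the careful verification that $\cF_n = F^{-n}(\cF_0)$: one has to pass from equality of generating families to equality of the $\sigma$-algebras they generate, invoking the standard fact that $\sigma(\phi_\alpha\circ F^n) = F^{-n}\sigma(\phi_\alpha)$ for any family $\{\phi_\alpha\}$. Once this identification is in place, the remainder is a routine application of the $F$-invariance of $\mu$ and of the defining property of conditional expectation.
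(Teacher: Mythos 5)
Your proof is correct and takes essentially the same route as the paper's: both arguments hinge on the observation that $\cF_n = F^{-n}(\cF_0)$ (the paper phrases this as ``$g\circ F^{-n}\in\cF_0$ for $g\in\cF_n$'') together with the $F$-invariance of $\mu$ to verify the defining integral identity of conditional expectation. If anything, you are slightly more careful than the paper in explicitly recording the $\cF_n$-measurability of $\EE(h\mid\cF_0)\circ F^n$, which the paper leaves implicit.
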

\begin{proof}
For $n \ge 1$, any $g\in \cF_{n}$, and $h\in L_1(M,\mu)$, the invariance of $\mu$ and the fact that $g\circ F^{-n}\in \cF_0$
implies that
\begin{align*}
\EE( \EE(h|\cF_0)\circ F^{n}\cdot g)&=\EE( \EE(h|\cF_0)\cdot g\circ F^{-n}) \\
&=\EE(\EE(h \cdot g\circ F^{-n} |\cF_0))\\
&=\EE(h \cdot g\circ F^{-n}) \\
&=\EE(h\circ F^{n}\cdot g).
\end{align*}
Thus, by the definition of conditional expectation, we have shown that the expectation of $h\circ F^{n}$ with respect to $\cF_n$ is $\EE(h\circ F|\cF_0)\circ F^{n}$, which verifies (\ref{stationary}).


Applying the above formula for $h=\tf_{n,0}-\mu(\tf_{n,0})$, we then obtain \[X_{n,k}=\EE(h\circ F^{k}|\cF_k)=\EE(h|\cF_0)\circ F^k=X_{n,0}\circ F^k.\]
\end{proof}

 Now we have shown that $\{X_{n,k}, k=0,\cdots, n\}$ is a sequence of identically distributed, stationary random variables.
Note that
\beq\label{sumg}\tf_{n,0}+\cdots+\tf_{n,0}\circ F^k-k\mu(\tf)=X_{n,0}+\cdots+X_{n,k}+\xi_{n,0}+\cdots+\xi_{n,k},\eeq
where
$$\xi_{n,k} =\tf_{n,k} - \EE_k(\tf_{n,k})=\tf_{n,0}\circ F^k-\EE_0(\tf_{n,0})\circ F^k= (\tf_{n,0}-\EE_0(\tf_{n,0}))\circ F^k.$$

Note that $\EE_0(\tf_{n,0})$ is the conditional average of $\tf_{n,0}$ on each unstable manifold; i.e., for any $ W^u \in \cW^u$, there exists $x_u\in W^u$ such that for any $x\in W^u$, $\EE_0(\tf_{n,0})(x)=\tf_{n,0}(x_u)$. Consequently, $\xi_{n,0}$ can be represented as
$$\xi_{n,0}(x)=\tf_{n,0}(x)-\tf_{n,0}(x_u),$$ with $x_u\in W^u(x)$ for any $x\in M$ such that $W^u(x)$ exists.

We now claim that
\[\frac{1}{\sqrt{(1-\theta^2) nH( c_n)}}\sum_{i=0}^{n-1} \xi_{n,i}\to 0
\]
in distribution. We use assumption (\textbf{A3}), which implies that
\[
\var\left(\sum_{i=0}^{n-1} \xi_i\right)=n\var(\xi_0)+2\sum_{k=1}^{n-1}(n-k)\cov(\xi_0\cdot\xi_{n,k})=\cO(n).
\]

 Thus, it follows from (\ref{sumg}) and Theorem \ref{MCLXn0} that we have
\[
\frac{1}{\sqrt{ \frac{1+\theta}{1-\theta}\cdot  nH( c_n) }} \sum_{k=1}^n \tf\circ F^k\to \N(0,1)
\]
in distribution.
 This completes the proof of Theorem  \ref{MCLXn2}.

\vspace{1cm}

Next we prove Theorem \ref{IPY}. Without loss of generality, we assume $\mu(\tf)=0$.
Let $J=[0,1]$. Let $D(J)$ be the space of right continuous real valued functions on $J$, endowed with the Skorohod $J_1$ topology.
 Let $X_{n,k}:=\mathbb{E}_n(\tf\cdot \bI_{|\tf|<c_n})\circ F^k$, with $n\geq 1, k=0,\cdots n$. We now consider a triangular array $\{\tX_{n,k}, n\geq 1, k=0,\ldots, n\}$, defined as
\[
\tX_{n,k}=\frac{X_{n,k}}{\sqrt{n H(c_n)} }.
\]
Note that $ H(c_n)=\var{X_{n,k}}$.
We define a random function
\beq\label{Wt1}
W_n(X,t)=\sum_{k=1}^{[tn]}\tX_{n,k}.
\eeq
Observe that $W_n(X,\cdot)$ is a right continuous step function, a measurable element of $D(J)$, and $W_n(X,0)=0$. The object of the invariance principle is to show that $W_n(X,\cdot)$ converges weakly to the standard Brownian motion process $W$ on $D(J)$.

Next, we consider the process $Y_{n,k}:=\tf_{n,0}\circ F^k$.  We form a random function such that
\beq\label{Wt}
W_n(Y,t)=\sum_{k=1}^{[tn]}\frac{Y_{n,k}}{\sqrt{n H(c_n)}}.
\eeq
It follows from (\ref{sumg}) that
\[
\sum_{k=1}^{[nt]}Y_{n,k}=\sum_{k=1}^{[nt]}X_{n,k}+\sum_{k=1}^{[nt]}\xi_{n,k}.
\]
Combining (\textbf{A3}) and Theorem \ref{IP}, we obtain Theorem \ref{IPY}, which states that  $W_n(Y,\cdot)\to \sqrt{ \frac{1+\theta}{1-\theta}}\cdot  W$ weakly on $D(J)$ as $n\to\infty$.

\section{Proof of Proposition \ref{Thm3}.}\label{Prop7}

For any piecewise H\"{o}lder continuous observable $f:\cM\to \mathbb{R}$, we let $\tf$ be its induced function. Assume $f\in \cH_{\gamma}$, with $\gamma\in (0,1)$, and we denote \beq\label{An}A_n=\sqrt{ nH_{\tf}(c_{n})}\eeq  where
$H_{\tf}(t):=\EE(\tf^2\bI_{ |\tf|<t})$.
And $S_n(\tf,t)=\sum_{k=1}^{[nt]}(\tf\circ F^k-\mu(\tf))$, then $W_n(\tf,t):=S_n(\tf,t)/A_n$. Moreover, we denote $\cS_n(f,t)=\sum_{k=1}^{[nt]}(f\circ T^k-\mu_{\cM}(f))$.
Theorem \ref{IPY} implies that $W_n(\tf,t)$ converges to the Brownian motion $W$ in $D(J)$. Taking a random time $N(t)$, we consider whether or not the same limit holds for $W_N(\tf,t)$.


To prove Proposition \ref{Thm3}, we need the following lemma from Billingsley \cite{Bil99}-Theorem 14.4 about random change of time. Let $N$ be a stopping time. We define $W_N(\tf,t)$ by
\[
W_N(\tf,t)(\omega)=\frac{S_N(\tf,t)}{A_N}(\omega)=\frac{1}{\sqrt{N(\omega) H_{\tf}(c_{N})}}\sum_{k=1}^{[N(\omega)t]}(\tf\circ F^k(\omega)-\mu(\tf)).
\]
\begin{lemma}\label{Nnstop} If $N_n$ is a random index with $N_n/n\to a$ in probability, where $a>0$ is a positive constant, then $W_n(\tf,t)\to \sigma\cdot W(t)$ for some $\sigma>0$ implies that $W_{N_n}(\tf,t)\to  \sigma\cdot  W(t)$, as $n\to\infty$.
\end{lemma}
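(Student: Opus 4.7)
The plan is to express $W_{N_n}(\tf,\cdot)$ in terms of $W_n(\tf,\cdot)$ composed with a random time-change, handle the random prefactor using the slow variation of $H_{\tf}$, and then invoke the classical continuous mapping / random time-change argument underlying Billingsley's Theorem 14.4, finally using Brownian self-similarity to identify the limit.

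\textbf{Step 1 (Algebraic identity).} Writing $[N_n t] = [n\cdot (N_n/n) t]$, one has directly from the definitions
\[
W_{N_n}(\tf,t)\;=\;\sqrt{\frac{n\,H_{\tf}(c_n)}{N_n\,H_{\tf}(c_{N_n})}}\;\cdot\;W_n\!\left(\tf,\,\tfrac{N_n}{n}\,t\right),
\]
valid for every $t\in J$ and every sample point.

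\textbf{Step 2 (Scaling prefactor).} Since $N_n/n\to a>0$ in probability, in particular $N_n\to\infty$ in probability, so both $c_n$ and $c_{N_n}$ tend to infinity. The regularity of the normalizing sequence $c_n$ (which, under the growth conditions (\ref{choosecnf}), increases in a controlled polynomial-up-to-slowly-varying-factors manner) together with $N_n/n\to a$ shows that $c_{N_n}/c_n$ is tight and bounded away from $0$. The slow variation of $H_{\tf}$, which is uniform on compact ratios, then yields $H_{\tf}(c_{N_n})/H_{\tf}(c_n)\to 1$ in probability. Combined with $n/N_n\to 1/a$, the prefactor converges to $1/\sqrt{a}$ in probability.

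\textbf{Step 3 (Invariance principle on a larger interval).} Theorem \ref{IPY} gives $W_n(\tf,\cdot)\Rightarrow \sigma W(\cdot)$ on $D([0,1])$ with $\sigma=\sqrt{(1+\theta)/(1-\theta)}$. Exactly the same martingale-approximation argument, applied to the partial sums up to $[Tn]$ in place of $[n]$, extends this to $W_n(\tf,\cdot)\Rightarrow \sigma W(\cdot)$ on $D([0,T])$ for every fixed $T>0$. Choose $T>a$; then with probability tending to $1$ we have $(N_n/n)t\le T$ for every $t\in[0,1]$.

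\textbf{Step 4 (Random time change).} Define the random continuous non-decreasing maps $\phi_n(t)=\min\{(N_n/n)t,T\}$ and the deterministic map $\phi(t)=at$ from $[0,1]$ to $[0,T]$. Since $N_n/n\to a$ in probability and both maps are linear, $\phi_n\to\phi$ in probability uniformly on $[0,1]$. Billingsley's Theorem 14.4 applied on $D([0,T])$ then gives the joint weak convergence
\[
W_n(\tf,\phi_n(\cdot))\;\Longrightarrow\;\sigma\,W(\phi(\cdot))\;=\;\sigma\,W(a\,\cdot)
\]
on $D([0,1])$.

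\textbf{Step 5 (Assembly and Brownian scaling).} Combining Step 1, the convergence-in-probability of the prefactor from Step 2, and the weak convergence of Step 4 via a Slutsky-type theorem in $D([0,1])$ (a random scalar converging in probability to a constant times a sequence converging weakly), we obtain
\[
W_{N_n}(\tf,\cdot)\;\Longrightarrow\;\frac{\sigma}{\sqrt{a}}\,W(a\,\cdot)\qquad\text{on }D([0,1]).
\]
By the Brownian scaling identity $W(a\,\cdot)\stackrel{d}{=}\sqrt{a}\,W(\cdot)$ as processes, the right-hand side has the same law as $\sigma W(\cdot)$, which is precisely the claim.

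The main obstacle is Step 2: one must argue, without extra hypotheses on the sequence $c_n$ beyond those provided by (\ref{choosecnf}), that $H_{\tf}(c_{N_n})/H_{\tf}(c_n)\to 1$ in probability. This reduces to combining the uniform convergence of $H_{\tf}(\lambda t)/H_{\tf}(t)\to 1$ on compact sets $\lambda\in[\lambda_0,\lambda_1]\subset(0,\infty)$ (a standard consequence of slow variation) with the tightness of $c_{N_n}/c_n$ implied by $N_n/n\to a$. A secondary technical point is the extension of the invariance principle to $D([0,T])$ in Step 3, which is routine since all the martingale-difference estimates in the proof of Theorem \ref{IPY} depend on the time horizon only through an overall multiplicative constant.
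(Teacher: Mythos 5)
Your proof is correct in substance, but it takes a genuinely different route from the paper, which offers no argument at all for Lemma \ref{Nnstop}: the lemma is simply quoted as Billingsley's random-change-of-time theorem (Theorem 14.4 of \cite{Bil99}). What you have done is reconstruct that theorem's proof --- the pathwise identity $W_{N_n}(\tf,t)=\bigl(nH_{\tf}(c_n)/(N_nH_{\tf}(c_{N_n}))\bigr)^{1/2}\,W_n(\tf,(N_n/n)t)$, the time-change/composition argument on $D([0,T])$, and the Brownian scaling $W(a\,\cdot)\stackrel{d}{=}\sqrt a\,W(\cdot)$ --- adapted to the paper's self-normalization $\sqrt{nH_{\tf}(c_n)}$. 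This buys something the bare citation does not: Billingsley's statement is for the classical normalization $\sigma\sqrt n$, so invoking it here tacitly requires exactly your Step 2, namely $N_nH_{\tf}(c_{N_n})/(nH_{\tf}(c_n))\to a$ in probability. As you correctly flag, (\ref{choosecnf}) alone imposes no regularity on the map $n\mapsto c_n$, so tightness of $c_{N_n}/c_n$ is not automatic in full generality; with the concrete choice used throughout the paper's applications, $c_n=\sqrt{n\ln\ln n}$, one has $c_{N_n}/c_n\to\sqrt a$ in probability, and the uniform convergence theorem for the slowly varying function $H_{\tf}$ then gives $H_{\tf}(c_{N_n})/H_{\tf}(c_n)\to 1$, closing the gap (this mild regularity assumption on $c_n$ is implicit in the paper's use of the lemma as well). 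The remaining ingredients --- extending Theorem \ref{IPY} to $D([0,T])$, joint convergence of the pair $(W_n(\tf,\cdot),\phi_n)$ when the time changes converge in probability to a deterministic limit, continuity of composition at continuous limit paths, and the Slutsky step for the scalar prefactor --- are standard and correctly assembled, so your argument is a legitimate self-contained substitute for the citation.
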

We now prove Proposition \ref{Thm3}.

Define a measure $\nu$ on $M$ such that $\frac{d\nu}{d\mu}(x)=  m\mu_{\cM}(M)$ for any $x\in M_m$. Clearly, (\ref{WNtft}) holds with respect to the measure $\nu$.
Given $n> 1$,  we fix $n' = [n/\mu(\R)]$.
Let
\[n''(x)=1+ \sum_{j=0}^{[n]} \bI_{M}(f\circ T^j(x)),\,\,\,\,\,\forall n\geq 1,
\]
then $n''$ is the number of returns within $n$ iterations under $T$, and $n''$ is a stopping time with respect to the filtration $\{\cF_n\}$. Thus,  by Lemma \ref{Nnstop}, we have
\beq\label{WNtft}
W_{n''}(\tf,t)\to \sqrt{\frac{1+\theta}{1-\theta}}\cdot  W(t)\eeq in $D(J)$.

  Moreover it implies that $S_{n''}(\R,t)\leq nt\leq S_{n''}(\R,t)$.   Let $\hat A_{n}=\sqrt{nH_{R}(c_n)}$.

Apply Lemma \ref{Nnstop} to $\R$ for the stopping time $n''$, we know that for $n\to \infty$, $$\frac{S_{n''} (\R,t)}{\hat A_{n''}}\to \sqrt{\frac{1+\theta}{1-\theta}}\cdot  W(t).$$
    This implies that $\frac{(n'-n'')\mu(\R)}{\hat A_{n''}}\to  \sqrt{\frac{1+\theta}{1-\theta}}\cdot  W(t)$ as $n\to\infty$.
Thus  $\{\frac{(n'-n'')\mu(\R)}{\hat A_{n''}}\}$ is  tight, which implies that for any $\varepsilon>0$, there exists $C>0$ such that
$$\nu(|n'-n''|\leq C A_{n''})\geq 1-\varepsilon.
$$

   Note that for $n$ large, $n'>n''$, thus as $n\to\infty$,
    $$I_{1,n}:=\frac{S_{n'} ( \tf,t)-S_{n''}  (\tf,t)}{A_n}=\frac{S_{n'-n''} (\tf,t)}{A_n}=\frac{S_{n'-n''} (\tf,t)}{A_{n' -n''}}\cdot\frac{A_{n' -n''}}{A_{n}}
    \to 0. $$
Next, we have
$$\S_{n}( f,t)-S_{n''} ( \tf,t)\leq \|f\|_{\infty}\left(nt-S_{n''}(\R,t)\right).$$
    Note that $S_{n''}(\R,t)\leq nt$, thus $[nt]-S_{n''}(\R,t)=k$ implies that $T^{[nt]}(x)\in T^k(M_m)$ for some $m>k$.
    Then
        $$\mu(([nt]-S_{n''}(\R,t))=k)\leq \sum_{m\geq k}\mu
        (M_m)\leq Ck^{-2}.$$
As a result the following process also converges to zero in probability as $n\to\infty$:
\beq\label{I2n}I_{2,n}:=\frac{\S_{n} (f,t)-S_{n''} ( \tf,t)}{A_n}\leq \|f\|_{\infty}\frac{[nt]-S_{n''}(\R,t)}{A_n}\to 0.\eeq

 Combining the above facts as well as Theorem \ref{IPY}, we have shown the convergence holds
with respect to the measure $\nu$,
\beq\label{n'} \frac{\S_n(f,t)}{A_n} = \frac{S_{n' }(\tf,t)}{A_{n' }}\cdot\frac{A_{n' }}{A_n}+\frac{S_{n''}  (\tf,t)- S_{n' }  (\tf,t)}{A_n}+
    \frac{\S_n (f,t)-S_{n''}  (\tf,t)}{A_n}\to   \sqrt{\frac{(1+\theta)\mu_{\cM}(M) }{1-\theta}}\cdot  W(t)\eeq
as $n\to\infty$.

Next we consider  the measure $\mu_{\cM}$. Since $\cM$ can be built into a tower based on $M$ with height function $\R$, $(M,\nu)$ can be viewed as isometric to the space $(\cM,\mu_{\cM})$. For any $x\in \cM$, let $\pi: \cM\to M$ be the projection onto the base along trajectories. For any Holder function $f:\cM\to \mathbb{R}$, if $x\in \cM\setminus M$ and $\min\{k\geq 1\,:\,T^k x\in M\}=i$ then $x$ belongs to the $i$-th level of the tower. For any $n>1$, $t\in (0,1]$ and $x\in \cM$, let $n''$ be the number of returns to the base $M$ within $[nt]$ iterations along the trajectory of $x$.  Then according to (\ref{I2n}), the following sequence converges to zero in probability:
    $$\frac{\S_n (f,t)-\S_n (f\circ \pi,t)}{A_n}\leq \|f\|_{\infty} \frac{n-S_{n''}( \R
    \circ \pi,t) }{A_n}\to 0.$$
         Then  $$\frac{\S_n( f,t)}{A_n}= \frac{\S_n (f\circ \pi,t)}{A_n}+\frac{\S_n (f,t)-\
         S_n (f\circ \pi,t)}{A_n}\to   \sqrt{\mu_{\cM}(M)\frac{1+\theta}{1-\theta}}\cdot W(t)$$
         in $D(J)$ as $n\to\infty$.

\section{Sufficient conditions for estimating diffusion coefficients}
The limiting law (\ref{ClTZ}) can be interpreted in physical terms as  {\it{superdiffusion}}, and the constant factors in the denominator refer to the so-called superdiffusion constant, which plays an important role in physics. Next, we introduce a set of new conditions, under which the superdiffusion constant can be characterized more specifically.  These conditions can be applied to hyperbolic attractors. Later, we apply this theorem to all dynamical systems considered in this paper.\\

\begin{itemize}
\item[\textbf{(B1)}]  Let  $f\in \cH_{\gamma}$ with exponent $\gamma\in (0,1)$. Assume there exist $N>0$ and a set of real numbers $\cA_f=\{a_1,\cdots, a_{N}\}$,  and for each $n\geq 1$, the level set $M_n$ is decomposed into $N$ connected sets $M_n=\cup_{k=1}^{N} M_{n,k}$. Let $U_{n,k}=\cup_{m=n}^{\infty} M_{m,k}$. We now define a function
    \beq\label{Jx}
J_f=\sum_{k=1}^{N} a_k \cdot \R\cdot \bI_{U_{1,k}}.\eeq Assume there exists an observable $E=E(f)$, such that  the induced function $\tf$ satisfies:
$$\tf(x) =J_f(x) +E(x)$$
\item[(\textbf{B2})]   Assume there exists $c_{M,f}>0$  such that  \beq\label{cMRn}c_{M,f} =\lim_{n\to\infty} n^2\mu(|J_f|\geq n).\eeq
\item[(\textbf{B3})] Assume there exists $\theta\in (-1,1)$ 
such that
\beq\label{EEXXA1}\EE((J_f\cdot \bI_{|J_f|<c_n})\circ F| \cF_0) = \theta \cdot J_f\cdot \bI_{|J_f|<c_n}+\cE_{n,0},\eeq
where $\cE_{n,0}\in \cF_0$.

  \item[(\textbf{B4})]
  Suppose $\{\tg_{n,k}, n\geq 1, k=0,\cdots, n\}$, for each $n\geq 1$, each $k=0,\cdots,n$, $\tg_{n,k}$ is an induced function by some $g_{n,k}\in\cH_{\gamma}$, and $\tg_{n,k}\leq c_n$. Then for any $k=1,\cdots, 2\ln n$, we have \beq\label{eq:condition D_r(u_n)2f13}
|\mu(\tg_{n,k}\circ F^k\cdot{\tg_{n,k}})-\mu({\tg_{n,k}})^2|\leq C \theta^{k}\ln n;
\eeq
 and for
  $k=2\ln n,\cdots,n$,
\beq\label{eq:condition D_r(u_n)2f1}
|\mu(\tg_{n,k}\circ F^k\cdot{\tg_{n,k}})-\mu({\tg_{n,k}})^2|\leq C \theta^{k/2}.
\eeq
where  $\theta\in (0,1)$ is a constant.
\end{itemize}
Remark:
Note that (\textbf{B1}) and (\textbf{B4}) implies that
\beq\label{CovEnk}\var(\sum_{k=0}^{n-1} E_{n,k})<C n\eeq for some uniform constant $C>0$, where $E_{n,k}=(E\cdot \bI_{\mathcal{R}<c_n})\circ F^k$, and  $c_n=\sqrt{n\ln\ln n}$.

Furthermore, (\textbf{B3}) and (\textbf{B4}) imply that
  \beq\label{covcEn}\sum_{k=1}^{n-1}(n-k)\cov(\cE_{n,0}\cdot\cE_{n,0}\circ F^k)=\cO(n).\eeq

\begin{theorem}\label{A5con}  Let $f$ be a Holder continuous function on $\cM$ with exponent $\gamma\in (0,1)$ and $\mu_{\cM}(f)=0$. If \textbf{(B1)-(B4)}   hold, then \beq\label{ClTZft}\frac{\tf+\cdots +\tf\circ F^{n-1}}{ \sqrt{\tfrac{1+\theta}{1-\theta}\cdot c_{M,f}\cdot  n\ln n}}\xrightarrow{d} \N(0,1)\eeq
converges in distribution, as $n\to\infty$. Moreover,  the  CLT holds for $\{f\circ T^n\}$:
\beq\label{ClTZff}\frac{f+\cdots +f\circ T^{n-1}}{\sqrt{ \tfrac{1+\theta}{1-\theta}\cdot\mu_{\cM}(M) \cdot c_{M,f}\cdot n \ln n}}\xrightarrow{d} \N(0,1)\eeq
converges in distribution, as $n\to\infty$.\end{theorem}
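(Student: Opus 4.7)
The plan is to deduce Theorem \ref{A5con} by reducing it to Theorems \ref{MCLXn2} and \ref{MCLXn1}: I verify that conditions \textbf{(B1)}--\textbf{(B4)} imply assumptions \textbf{(A1)}--\textbf{(A3)} for the process $X_n=\EE_n(\tf\circ F^n)-\mu(\tf)$, and simultaneously identify the slowly varying function $H$ explicitly. With the natural choice $c_n=\sqrt{n\ln\ln n}$, condition \textbf{(B2)} yields $H(c_n)\sim c_{M,f}\ln n$, so $n H(c_n)\sim c_{M,f}\, n\ln n$, which together with the variance factor $(1+\theta)/(1-\theta)$ produced by Theorem \ref{MCLXn2} gives exactly the normalization appearing in (\ref{ClTZft}). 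The second statement (\ref{ClTZff}) then follows from Proposition \ref{Thm3} and Theorem \ref{IPYf}, which introduces the extra factor $\mu_\cM(M)$ arising from the Kac-type conversion from the induced system back to the original one.

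To verify \textbf{(A1)}, I use the decomposition $\tf=J_f+E$ from \textbf{(B1)}. Since $J_f$ is a function of $\cR$ it is $\cF_0$-measurable, hence $\EE_0(\tf)=J_f+\EE_0(E)$, and the heavy tail of $X_0$ is entirely carried by $J_f$ (the contribution of $E$ is negligible by (\ref{CovEnk})). Condition \textbf{(B2)} gives $\mu(|J_f|\geq n)\sim c_{M,f}/n^2$, from which a Karamata-type truncation computation produces
\[
H(t)=\var\bigl(X_0\bI_{|X_0|<t}\bigr)\sim 2c_{M,f}\ln t,
\]
so $H$ is slowly varying. The choice $c_n=\sqrt{n\ln\ln n}$ then satisfies $n\mu(|X_0|\geq c_n)\asymp c_{M,f}/\ln\ln n\to 0$ and $c_n/\sqrt{nH(c_n)}\to 0$, completing \textbf{(A1)}. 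Assumption \textbf{(A2)} is almost immediate: \textbf{(B3)} provides the conditional expectation identity for the dominant term $J_f\bI_{|J_f|<c_n}$, and the bounded correction $E\bI_{|\tf|<c_n}$ is absorbed into $\cE_{n,0}$ using (\ref{CovEnk}); the covariance summability in \textbf{(A2)} is exactly (\ref{covcEn}).

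The principal obstacle is verifying \textbf{(A3)}. The key observation is that $J_f\in\cF_0$, so on any unstable manifold $W^u$ the fluctuation $\xi_{n,0}=\tf\bI_{|\tf|<c_n}-\EE_0(\tf\bI_{|\tf|<c_n})$ satisfies $\xi_{n,0}(x)=(E\bI_{|\tf|<c_n})(x)-(E\bI_{|\tf|<c_n})(x_u)$ for some $x_u\in W^u(x)$, which is uniformly bounded by $C\|E\|_{C^\gamma}(\diam W^u)^\gamma$ independently of $n$. Applying \textbf{(B4)} to $g_{n,k}=\tf\bI_{|\tf|<c_n}-\EE_0(\tf\bI_{|\tf|<c_n})$ and splitting the covariance sum at $k=2\ln n$ yields
\[
\sum_{k=0}^{m-1}(m-k)\bigl|\cov(\xi_{n,0},\xi_{n,k})\bigr|\leq m\Bigl(\sum_{k\leq 2\ln n}C\theta^{k}\ln n+\sum_{k>2\ln n}C\theta^{k/2}\Bigr)=O(m),
\]
since the $\ln n$ in the first block is swallowed by geometric summability and the tail from large $k$ contributes $O(m\,\theta^{\ln n})=O(m/n^{|\ln\theta|})$. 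The hardest step is ensuring that the logarithmic factor in \textbf{(B4)} really is only an artifact of the truncation at level $c_n$ and does not compound once we pass to $\xi_{n,k}$; this is precisely what the $\cF_0$-measurability of $J_f$ accomplishes, reducing all fluctuation questions to the H\"older part $E$, for which standard exponential correlation decay on the induced system applies.
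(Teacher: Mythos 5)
Your overall strategy matches the paper's in its essentials: decompose $\tf=J_f+E$ via \textbf{(B1)}, observe that the heavy tail of the process is carried by $J_f$, compute $H(t)\sim 2c_{M,f}\ln t$ from \textbf{(B2)} by a Karamata-type argument, take $c_n=\sqrt{n\ln\ln n}$, and dispose of the $E$-contribution using the $O(n)$ variance bound (\ref{CovEnk}). The normalization $\sqrt{nH(c_n)(1+\theta)/(1-\theta)}\sim\sqrt{\frac{1+\theta}{1-\theta}c_{M,f}\,n\ln n}$ and the passage to (\ref{ClTZff}) via Proposition \ref{Thm3}/Theorem \ref{MCLXn1} are exactly as in the paper.

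Two points where your proposal differs from, or falls short of, the paper's argument. First, the paper does not route through $X_n=\EE_n(\tf\circ F^n)-\mu(\tf)$ and condition \textbf{(A3)}; it instead sets $X_n:=J_f\circ F^n$. Since $J_f$ is determined by $\cR$, it is $\cF_0$-measurable, so $J_f\circ F^n\in\cF_n$ and $\EE_n(X_n)=X_n$ automatically; one can then verify \textbf{(A1)}--\textbf{(A2)} for $\{X_n\}$, invoke Theorem \ref{MCLXn0}, and add the residual $\sum_k E\circ F^k$ back at the end via (\ref{CovEnk}). This is cleaner than your route, because it avoids \textbf{(A3)} altogether and avoids the technical question of whether $\xi_{n,0}=\tf\bI_{|\tf|<c_n}-\EE_0(\tf\bI_{|\tf|<c_n})$ is an ``induced function by some $g\in\cH_\gamma$'' as \textbf{(B4)} is literally phrased for (it is not: it involves a conditional expectation on top of a truncation). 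Second, and this is a genuine error in your argument: the claim that $\xi_{n,0}$ is ``uniformly bounded by $C\|E\|_{C^\gamma}(\diam W^u)^\gamma$ independently of $n$'' is false. The error $E$ is not in $\cH_\gamma$ with bounded norm: on the cell $M_m$ it has magnitude of order $m^{1-\gamma/2}$ (or $m^{1-\gamma}$, depending on the model) and H\"older norm of order $m^{1+\gamma}$ by Lemma \ref{Holdertf}, so $\|E\|_{C^\gamma}$ is unbounded over $M$ and the product $\|E\|_{C^\gamma}(\diam W^u)^\gamma$ is not a uniform constant. Fortunately the uniform boundedness is not needed: the covariance summability in \textbf{(A3)} follows from \textbf{(B4)} applied to the truncated error $E_{n,k}=(E\cdot\bI_{\cR<c_n})\circ F^k$ (whose sup norm grows like a power of $c_n$ but is still admissible in \textbf{(B4)}), exactly as recorded in (\ref{CovEnk}). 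You should replace the boundedness claim with this correct mechanism.
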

\begin{proof}
 Assume \textbf{(B1)-(B4)}   hold. We define $X_n=J_f\circ F^n$.  Since $J_f$ is constant on each level set of $\cR$, and $\cR\circ F^n\in \cF_n$, clearly, $X_n$ is also $\cF_n$-measurable.
Thus we have the following decomposition:
$$\tf+\cdots+\tf\circ F^{n-1}=\left(X_0+\cdots+X_{n-1}\right)+\sum_{k=0}^{n-1} E\circ F^k$$
We define $E_{n,k}=(E\cdot \bI_{\mathcal{R}<c_n})\circ F^k$, for $n\geq 1$, and $k=0,\cdots, n$.
Moreover, we claim that
\[\frac{1}{\sqrt{ n\ln n}}\sum_{i=0}^{n-1} E_{n,0}\circ F^i\to 0
\]
in distribution, as $n\to\infty$.

We use assumption \textbf{(B1)} and \textbf{(B4)}, which implies that
\[
\var\left(\sum_{i=0}^{n-1} E_{n,0}\circ F^i\right)\leq Cn
\] for a uniform constant $C>0$.  This implies that
$$\frac{1}{\sqrt{ n\ln n}}\sum_{i=0}^{n-1} E_{n,0}\circ F^i\to 0$$
in $L_2$, as $n\to\infty$.

Next it suffices to verify conditions \textbf{(A1)-(A2)} for the process $\{X_n\}$.
 Note that
\begin{align*}
 \EE(J_f^2\bI_{ |J_f|<t})&=\sum_{k=1}^{N}a_k^2 \cdot \EE(\cR^2\cdot \bI_{(x\in U_{1,k}\,:\,|a_k|\cR(x)<t)})\\
 &=2\sum_{k=1}^{N}a_k^2 \cdot \sum_{s=1}^{[t/|a_k|]} s \,\mu(U_{s,k}).
 \end{align*}
 where $U_{s,k}=\cup_{m=s}^{\infty} M_{m,k}$.
Since $ \EE(J_f^2\bI_{ |J_f|<t})\to\infty$ as $t\to\infty$. By  L'hospital's rule and (\textbf{B2}), we get
\begin{align}\label{CMX0}\lim_{t\to\infty}\frac{ \EE(J_f^2\bI_{ |J_f|<t})}{\ln t}&=\lim_{t\to\infty}\frac{\sum_{k=1}^{N}2a_k^2 \cdot \int_{0}^{t/|a_k|} s \,\mu(U_{s,k})\, ds}{\ln t}\nonumber\\
&=\lim_{t\to\infty}\sum_{k=1}^{N}2t^2  \cdot \mu(U_{t/a_k,k}) \nonumber\\
&=\lim_{t\to\infty}2t^2\,\mu(|J_f|\geq t)=2c_{M,f}.\end{align}
We choose
 a positive increasing sequence $\{c_n\}$ such that $c_n=\sqrt{n\ln\ln n}$. Clearly, we have $\lim_{n\to\infty} c_n=\infty$ and the following holds:
\beq\label{choosecn2}\lim_{n\to\infty} n \mu(|J_f|>c_n)=0\,\,\,\,\,\,\text{ and }\,\,\,\,\,\,\,\lim_{n\to\infty}\frac{c_n}{\sqrt{n  \EE(J_f^2\bI_{ |J_f|<c_n})}}=0.\eeq

It follows from the above analysis that
\beq\label{HcnMncM}
\lim_{n\to\infty} \frac{ \EE(J_f^2\bI_{ |J_f|<c_n})}{\ln n}=\lim_{n\to\infty} n^2\,\mu(|J_f|\geq n)=c_{M,f}.
\eeq
This relation also tells us that  in order to estimate the variance of the variance $J_f\cdot \bI_{ |J_f|<c_n}$, it is enough to estimate the tail distribution of the random variable $\mu(|J_f|\geq n)$.  We denote $a_M:=\max\{a_k, k=1,\cdots, N\}$ as the largest value that $I_f$ can take.  Note that for any $t>0$
\beq\label{JtaM}(|J_f|>t/a_M)\supseteq (\R>t)\supseteq  (|J_f|>a_Mt)\eeq

Combining with (\ref{HcnMncM}), we know that for some constant $C>0$:
\beq\label{muJfRn}
\mu(\R\geq n)\leq C n^{-2}.
\eeq

\vspace{0.5cm}

\noindent{\textbf{Claim:}} We claim that for any $n\geq 1$, \beq\label{hatcn}  \lim_{n\to\infty}\frac{\EE(J_f^2\cdot  \bI_{ |J_f-\EE(J_f)|< c_n})}{\ln n}=\lim_{n\to\infty}\frac{\EE(J_f^2\cdot  \bI_{|J_f|< c_n})}{\ln n}.
\eeq

\vspace{0.5cm}

To prove this claim, we first get  the following  estimation for any $b\geq 0$. Note that
\begin{align*}\lim_{n\to\infty}\frac{\EE(J_f^2\cdot  \bI_{ c_n\leq  |J_f|< c_n+b})}{\ln n}&=
\lim_{n\to\infty}\frac{2}{\ln n} \int_{c_n}^{c_n+b} s \,\mu(|J_f|>s)\, ds\\
&\leq \lim_{n\to\infty}\frac{2 (c_n+b)}{\ln n}   \mu(c_n<|J_f|<c_n+b) \\
&\leq \lim_{n\to\infty}\frac{2 (c_n+b)}{\ln n}   \mu(|J_f|>c_n) =0,\end{align*}
where we have used (\ref{HcnMncM}) in the last step estimation.

Combining with the above facts, we have
\begin{align*}
&\lim_{n\to\infty}\frac{\EE(J_f^2\cdot \bI_{|J_f-\EE(J_f)|<c_n})}{\ln n}=
\lim_{n\to\infty}\frac{\EE(J_f^2\cdot  \bI_{ -c_n+\EE(J_f)<
J_f<c_n+\EE(J_f)})}{\ln n}\\&=\lim_{n\to\infty}\frac{\EE(J_f^2\cdot  \bI_{ |J_f|< c_n})}{\ln n}+\lim_{n\to\infty}\frac{\EE(J_f^2\cdot  \bI_{ c_n\leq J_f< c_n+\EE(J_f)})}{\ln n}+\lim_{n\to\infty}\frac{\EE(J_f^2\cdot  \bI_{ -c_n+\EE(J_f)\leq J_f< -c_n})}{\ln n}\\
&=\lim_{n\to\infty}\frac{\EE(J_f^2\cdot  \bI_{ |J_f|< c_n})}{\ln n}.\end{align*}
This finishes the claim (\ref{hatcn}).\\
\\

In addition, we can check that
\begin{align*}
&\lim_{n\to\infty}\frac{\EE((J_f-\EE(J_f))^2\cdot  \bI_{ |J_f-\EE(J_f)|< c_n})}{\ln n}\\
&=\lim_{n\to\infty}\frac{\EE(J_f^2\cdot  \bI_{ |J_f-\EE(J_f)|< c_n})}{\ln n}+\lim_{n\to\infty}\frac{\EE(\EE(J_f)^2-2J_f\EE(J_f)\cdot  \bI_{ |J_f-\EE(J_f)|< c_n})}{\ln n}\\
&=\lim_{n\to\infty}\frac{\EE(J_f^2\cdot  \bI_{ |J_f|< c_n})}{\ln n}.
\end{align*}
This allows us to bypass the complication of subtracting the expectation of $J_t$ in the above limit in our  estimations.
 Thus (\textbf{B2}) implies that $$
\lim_{n\to\infty}\frac{\EE((J_f-\EE(J_f))^2\cdot  \bI_{ |J_f-\EE(J_f)|< c_n})}{\ln n}=2c_{M,f},$$ which verifies (\textbf{A1}).

Since (\textbf{B3}) and (\textbf{B4}) imply (\textbf{A2}),  we get that
\begin{align*}
\frac{\tf+\cdots+\tf\circ F^{n-1}-n\mu(\tf)}{ \sqrt{\frac{1+\theta}{1-\theta}\cdot c_{M,f} \cdot n\ln n}}\xrightarrow{d} \N(0,1)
\end{align*}
converges in distribution as $n\to\infty$ by Theorem  \ref{MCLXn2}.

 Combining with Theorem \ref{MCLXn1},   we know that  the  CLT holds for $\{f\circ T^n\}$:
\beq\label{ClTZf3}\frac{f+\cdots +f\circ T^{n-1}-n\mu_{\cM}(f)}{ \sqrt{\frac{1+\theta}{1-\theta}\cdot c_{M,f} \cdot \mu_{\cM}(M)\cdot  n (\ln n)}}\xrightarrow{d} \N(0,1)\eeq
converges in distribution, as $n\to\infty$.

\end{proof}

\section{Application to nonuniformly hyperbolic systems}
\subsection{Introduction of nonuniformly hyperbolic billiards}

Let $\cD \subset \RR^2$ be a bounded open connected domain whose boundary is a finite union of
$C^3$ compact curves:
$$\partial \cD = \Gamma_1 \cup \cdots \cup \Gamma_{l}.$$
$\cD$ is called a \emph{billiard table} and $\Gamma_1,\ldots,\Gamma_l$ are its \emph{walls}.
To generate dynamics, we let a point-like particle move inside the billiard table with unit velocity. Upon
colliding with a wall, the particle bounces instantaneously such that its angle of incidence is equal
to its angle of reflection. These dynamics are referred to as the \emph{billiard flow} on $\cD$. The billiard
flow induces a first return map $T$ to $\partial \cD$ often referred to as the \emph{billiard map}. We will
be studying the discrete-time dynamics of the billiard map and its associated statistical properties.

Assume that $\partial \cD$ has a counterclockwise orientation. By construction, we have
\[
T: \partial \cD \times [-\pi/2,\pi/2] \rightarrow \partial \cD \times [-\pi/2,\pi/2].
\]
The coordinates of the billiard map are given by $(r,\varphi)$, where $r$ is an arc length parameter
on the boundary of the billiard table and $\varphi$ is the angle of reflection relative to the
normal direction. It is known that the billiard map preserves a probability measure on the collision space
$\M = \partial \cD \times [-\pi/2,\pi/2]$ given by
$$d\mu_{\cM} = c_{\mu}\cos\varphi \, dr \, d\varphi,$$
where $c_{\mu}=\left(2\,|\partial \cD|\right)^{-1} $ is the normalizing constant.

The dynamics of the billiard map are completely determined by the shape of the table. In rectangles and
ellipses, for instance, the dynamics are completely integrable. Sinai introduced the first class of chaotic
billiards in 1970 \cite{Sin}. In fact he showed that if $\partial \cD$ is convex inwards and has no
cusps, then the system is hyperbolic, ergodic, mixing, and K-mixing. Since then, many other classes of
chaotic billiards have been studied; see for example the works of Bunimovich
\cite{Bu74,Bu79},  Markarian \cite{Ma88}, and
Wojtkowski \cite{Woj86}.
We will focus on billiards which are nonuniformly hyperbolic. Although the central limit theorem
has previously been proved in some of the cases we investigate, we believe that our methods have two
advantages: they can be applied to a wide variety of dynamical systems and will give us a strong
understanding of the variance of the normal distribution in the central limit theorem. To achieve this, we
will be utilizing the theorems developed above which rely largely on the application
of the martingale central limit theorem to the problem at hand.

The billiards in this chapter are nonuniformly hyperbolic and have
polynomial rates of mixing. This slow mixing rate causes the classical central limit theorem to fail. It is therefore
advantageous to construct a map induced by the billiard map which  enjoys an exponential decay
of correlations. In certain billiards this can be accomplished by considering a subset $M \subset \M$ in
which we ignore ``nonessential collisions;"  these are sets in the phase space which
contribute to the nonuniformity of the hyperbolicity. The classification of these collisions depends on
the billiard table we are considering, so we will leave the specifics for subsequent sections.
We can
 define a return time function $\R: M \rightarrow \NN$ by
\[
\R(x) = \min\{m \ge 1 : T^m x \in M\}
\]
and an induced billiard map $F : M \rightarrow M$ by
\[
F(x) = T^{\R(x)}(x).
\]
Note that $\R$ can be  extended to $\cM$ almost surely as defined in (1).
Furthermore, we define $m$-cells $M_m$  in $M$ as
$$M_m = \{x \in M : \R(x) = m\}.$$
The induced dynamical system preserves the probability measure $\mu$ on $M$, where
$\mu(A) = \mu_{\cM}(A)/\mu_{\cM}(M)$ for any $A \in M$.

Note that the collection of $m$-cells $\{M_m\}_{m=1}^{\infty}$ is an infinite partition of $M$ into disjoint sets, each
with positive probability. We define $\cF_0=\sigma(\cR)$ to be the $\sigma$-algebra generated by $\cR$, and $\cF_n=\sigma(\cR\circ F^m, -\infty<m\leq n)$ to be the smallest $\sigma$-algebra generated by $\cR\circ F^m$ with $m\leq n$.

To determine the diffusion constant $\tilde\sigma_f$  specifically, the following subsections will exhibit nonclassical central limit theorems for $\tf$, associated with any H\"{o}lder continuous function $f:\cM\to\mathbb{R}$ on various billiard
phase spaces; i.e., we will show that
$$\lim_{n\rightarrow \infty} \frac{1}{\sqrt{n\ln n}} \sum\limits_{i=0}^{n-1} \left[ \tf \circ F^i - n\mu_{M}(\tf) \right]
	\xrightarrow{d} \N(0,\tilde\sigma_f^2),$$
where $\tilde\sigma_f^2$ is a constant which depends on the shape of the table being studied. The main reason that we concentrate on $\tf$ is that the CLT for the process $\{f\circ T^n\}$ follows from that of $\{\tf\circ F^n\}$, as indicated by Proposition \ref{Thm3}.

This is an amazing result on the tables we study for several reasons. It is possible for
trajectories of the billiard to become stuck
in arbitrarily long sequences of ``nonessential collisions," that is, many iterations may occur in $\M \setminus M$.
Consequently, the return time map $\R$ is unbounded in these systems.
Furthermore, $m$-cells in these billiards have measure
$\mu(M_m) \asymp m^{-3}$. This means that $\R$ has infinite variance:
\[\mu(\R^2) = \sum\limits_{m=1}^{\infty} m^2 \mu(M_m) \asymp \sum\limits_{m=1}^{\infty} m^{-1}.\]

Although a trajectory (under $T$) may become
stuck in an arbitrarily large number of nonessential collisions, the central limit theorem indicates that this
is highly atypical. However, this possibility does contribute to the nonstandard scaling factor found in the theorem.
We see that the extra $\sqrt{\ln n}$ leads to the variance of the average being $\sigma_R^2 n^{-1}\ln n$ as opposed
to the classical $\sigma_R^2 n^{-1}$. Clearly, the variance in our case converges to zero more slowly, meaning that
the convergence of the time average of $\R$ to its space average is also slower.

We next review the concept of a standard pair and state a growth lemma. For an unstable curve $W$ and a probability measure $\nu_0$ on the Borel $\sigma$-algebra of $W$, we say that the pair $(W,\nu_0)$ is a \textit{standard pair} if $\nu_0$ is absolutely continuous with respect to the Lebesgue measure, $ m_W$, induced by the curve length, with density function $f(x):=d\nu_0/dm_W$ satisfying
\beq\label{lnholder}
|\ln f(x)-\ln f(y)|\leq C {d}_W(x,y)^{\gamma_0}.
\eeq
 Here the $\gamma_0$ exponent is the H\"{o}lder exponent which appears in the distortion bound for the map $F$. Also, ${d}_W(x,y)$ is the distance between $x$ and $y$ measured along the smooth curve $W$.

The notion of a standard pair was studied  by Chernov and Dolgopyat in \cite{CD}. In particular, they considered families of standard pairs
 $\cG=\{(W_{\alpha}, \nu_\alpha)\,:\, \alpha\in \cA\}$ where $\cA\subset [0,1]$.
Let $\cW=\{ W_{\alpha}\,|\, \alpha\in \cA\}$. We call $\cG$ a {\em standard family} if $\cW$ is a measurable foliation of a  measurable subset of $M$,
and there exists a finite Borel measure $\lambda_{\cG}$ on $\cA$, which defines a measure $\nu$
 on $M$  by
\beq\label{cGnm}
   \nu(B):=\int_{\alpha\in\cA} \nu_{\alpha}(B\cap
   W_{\alpha})\,
   d\lambda_{\cG}(\alpha)\hspace{1cm}
   \eeq
for all  measurable sets $B\subset M$. In the following, we denote a standard family by $\cG=(\cW,\nu)$.

Define a function $\cZ$ on standard families, such that
for any standard family $\cG=(\cW,\nu)$,
\beq\label{cZ}
\cZ(\cG):=\frac{1}{\nu(M)}\,\int_{\cA}|W_{\alpha}|^{-1}\,\lambda_{\cG}(d\alpha). \eeq

For any unstable curve $W\in \cW$, any $x\in W$,  and any $n\geq 1$, let $W^{k}(x)$ be the smooth unstable curve  in $F^k W$ that contains $F^k x$.
We define $r_k(x)$ as the minimal distance between $F^k x$ and  the two end points of $W^u(F^k x))$.

The following Growth Lemma was proved in \cite[Lemma 6]{CZ09}.
\begin{lemma}[Growth Lemma]\label{growthlemma}
Let $\cG=(\cW, \nu)$ be a standard family such that $\cZ(\cG)<\infty$. Then for any $\eps>0$,
$$\nu(r_k(x)<\eps)\le C_0\eps \cZ(F^k\cG)\leq C_1(\vartheta^{k-1}\cZ(F\cG)+C_2)\eps$$
where $C_0>0, C_1>0, C_2>0$ and $\vartheta\in (0,1)$ are constants.
\end{lemma}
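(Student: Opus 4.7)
The plan is to establish the two inequalities separately: a static bound relating $\nu(r_k<\eps)$ to $\cZ(F^k\cG)$, and a dynamic bound controlling how $\cZ$ evolves under iteration by $F$.

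For the static inequality, I first observe that the event $\{r_k<\eps\}$ pulls back under $F^k$ to the union, over all curves $V \in F^k\cW$, of the two $\eps$-end-neighborhoods of $V$ (measured along $V$). The image measure on each such $V$ is absolutely continuous with density $f_V := dF^k_\ast \nu_V/dm_V$ satisfying the log-Hölder bound inherited from (\ref{lnholder}) (this property is preserved by $F$ on standard pairs). Since $\int_V f_V\,dm_V = 1$, the log-Hölder bound forces $f_V(x) \leq C/|V|$ for all $x\in V$ with a uniform $C$. Hence the $\nu$-mass on each pair of endpoint $\eps$-neighborhoods is at most $2C\eps/|V|$. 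Integrating against $\lambda_{F^k\cG}$ and using the definition of $\cZ$ in (\ref{cZ}) gives the first inequality $\nu(r_k<\eps) \leq C_0\eps \cZ(F^k\cG)$.

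For the dynamic inequality, the plan is to prove a single-step contraction $\cZ(F\cG) \leq \vartheta \cZ(\cG) + C_2'$ and then iterate. Fix $W_\alpha \in \cW$ and decompose $F(W_\alpha)$ into its maximal smooth unstable components $\{V_{\alpha,i}\}$, obtained by cutting $W_\alpha$ with the singularity set $S_1$ of $F$; let $W_{\alpha,i} = F^{-1}(V_{\alpha,i})\cap W_\alpha$. Partition the components into \emph{long} ones ($|V_{\alpha,i}|\geq \delta_0$, each contributing at most $\delta_0^{-1}$ to $\sum_i |V_{\alpha,i}|^{-1}$, with boundedly many per source curve) and \emph{short} ones, controlled by a one-step expansion estimate for $F$ of the form
\[
\sum_{\text{short }i}\frac{|W_{\alpha,i}|}{|W_\alpha|\,|V_{\alpha,i}|} \leq \frac{\vartheta}{|W_\alpha|},
\]
which asserts that the hyperbolic expansion of $F$ dominates the combinatorial complexity introduced by the cuts. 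Integrating both bounds against $\lambda_\cG$ and reweighting the conditional measures on the pieces via the Jacobian of $F$ (which, by bounded distortion, is comparable to $|V_{\alpha,i}|/|W_{\alpha,i}|$) yields $\cZ(F\cG)\leq \vartheta\cZ(\cG) + C_2'$. Geometric summation gives $\cZ(F^k\cG)\leq \vartheta^{k-1}\cZ(F\cG) + C_2'/(1-\vartheta)$, which is the second inequality after absorbing constants into $C_1$ and $C_2$.

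The main obstacle is the one-step expansion bound, which is system-dependent and requires a quantitative description of how the singularity set $S_1$ of the induced map intersects a generic short unstable curve, together with a sharp lower bound on the expansion factors on the resulting pieces. For the induced maps relevant here (stadia, cusps, semi-dispersing billiards), the fragmentation of a short unstable curve is bounded and the hyperbolicity of $F$ is uniform on $M$, so the constant $\vartheta<1$ can be extracted following the scheme in \cite{CZ09}; verifying it in each application is the heavy geometric step of the argument, while everything else is bookkeeping with standard pairs and bounded distortion.
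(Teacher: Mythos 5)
Your proposal reconstructs the standard Chernov--Zhang argument for the Growth Lemma, which is exactly what the paper cites (Lemma~6 of \cite{CZ09}) without re-proving. Both halves of your argument are the right ones: the static bound $\nu(r_k<\eps)\le C_0\eps\,\cZ(F^k\cG)$ via the uniform density bound $f_V\le C/|V|$ that follows from the log-H\"older regularity \eqref{lnholder} and normalization; and the dynamic contraction $\cZ(F\cG)\le\vartheta\cZ(\cG)+C_2'$ from the long/short decomposition plus the one-step expansion estimate, iterated geometrically to $\cZ(F^k\cG)\le\vartheta^{k-1}\cZ(F\cG)+C_2'/(1-\vartheta)$. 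You also correctly isolate the one-step expansion estimate as the system-dependent geometric input that must be verified separately for the induced maps of stadia, cusps, and semi-dispersing billiards. Two small cosmetic remarks: for the long pieces you only need $\sum_i|W_{\alpha,i}|\le|W_\alpha|$ to bound their contribution by $1/\delta_0$, so the ``boundedly many per source curve'' clause is not required; and the propagation of the log-H\"older constant under $F^k$ (which you invoke implicitly in the static step) deserves a sentence --- it relies on the standard self-improving estimate that the regularity constant of a standard pair contracts under $F$ up to a bounded distortion contribution, so it stabilizes at a uniform value.
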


{For $\gamma\in (0,1)$, let $\cH_{\gamma}$	be the set of  bounded functions $f\in L_{\infty}(M,\mu)$ for which  there exists an integer $n_0\geq 1$, such that for any connected component $A\in \sigma(\cR\circ F^{n_0})$ and any $x, y\in A$,}
\beq \label{DHC-} 	|f(x) - f(y)| \leq \|f\|_{\gamma} \dist(x,y)^{\gamma},\eeq
with
$$\|f\|_{\gamma}:= \sup_{A\in \sigma(\cR\circ F^{n_0})}\sup_{ x, y\in A}\frac{|f(x)-f(y)|}{\dist(x,y)^{\gamma}}<\infty.$$
Here  $\sigma(\cR\circ F^{n_0})$ is the $\sigma$-algebra generated by the random variable $\cR\circ F^{n_0}$.
For every $f\in \cH_{\gamma}$we define
\beq \label{defCgamma1}
   \|f\|_{C^{\gamma}}:=\|f\|_{\infty}+\|f\|_{\gamma}.
\eeq

It was shown in \cite[Theorem 3]{CZ09} that for  $f, g\in \cH_{\gamma}$, and any integer $k$, the correlations of $f$ and $g\circ   F^k$ satisfy:
\beq\label{correT}\text{Cov}(f,g\circ   F^k):=|\mu(f\cdot g\circ   F^k)-\mu(f)\mu(g)|\leq C \|f\|_{C^{\gamma}} \|g\|_{C^{\gamma}}\vartheta^k\eeq
where $C>0$ and $\vartheta\in (0,1)$ are constants.

For a fixed large constant $C_p>0$, any standard family $\cG$ with $\cZ(\cG)<C_p$ will be called a {\it{proper}} family. The following was proved in \cite[Theorem 2]{CZ09}.

 \begin{lemma}[Equidistribution]\label{growthlemma2}
If $\cG=(\cW,\nu)$ is a proper family, then for any $g\in \cH_{\gamma}$, with $\gamma\in (0,1)$,
\beq\label{properg}
|F^m\nu(g)-\mu(g)|\leq C \|g\|_{C^{\gamma}}\theta^m
\eeq
where $C>0$ is a constant.
\end{lemma}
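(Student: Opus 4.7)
The plan is to establish (\ref{properg}) by a coupling argument, pairing the push-forward $F^m\nu$ against a standard-family realization of the invariant measure $\mu$. Since $g \in \cH_{\gamma}$ is Hölder only on atoms of $\sigma(\cR \circ F^{n_0})$, I would first iterate $n_0$ times: by Lemma \ref{growthlemma}, $F^{n_0}\cG$ remains proper (after enlarging $C_p$ if necessary), and each unstable curve in $F^{n_0}\cW$ may be refined to lie in a single atom of $\sigma(\cR)$, so that the bound (\ref{DHC-}) applies uniformly along curves.

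The invariant measure $\mu$ admits a disintegration along the unstable foliation $\cW^u$ with conditional densities satisfying the standard-pair regularity (\ref{lnholder}), realizing $\mu$ as a proper family $\hat{\cG}=(\hat{\cW},\mu)$ with $\cZ(\hat{\cG})<\infty$. The main step is to construct a coupling $\Pi$ of $F^m\nu$ and $\mu$ on $M \times M$ such that at each iterate $k=1,\dots,m$ a uniform fraction $c_0\in(0,1)$ of the still-uncoupled mass is paired along short stable manifolds. The construction proceeds by covering both $F^k\cW$ and $F^k\hat{\cW}$ by finitely many \emph{magnets}---rectangles with local product structure built on sufficiently long unstable curves---and matching densities via the stable holonomy inside each magnet. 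Lemma \ref{growthlemma} ensures that the total measure of curves too short to enter a magnet decays like $\vartheta^k$, so the per-step coupling rate $c_0$ is strictly positive and uniform in $k$.

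Once the coupling is in place, a pair $(x,y)$ matched at time $k \leq m$ satisfies $d(F^{m-k}x,F^{m-k}y)\leq C\eta^{m-k}$ by stable contraction (the forward analogue of (\ref{A4})). The Hölder bound (\ref{DHC-}) then yields $|g(F^{m-k}x)-g(F^{m-k}y)|\leq \|g\|_{\gamma} C^{\gamma} \eta^{\gamma(m-k)}$, while the still-uncoupled mass at time $m$ has measure at most $(1-c_0)^m$. Integrating over $\Pi$ therefore gives
\[
|F^m\nu(g)-\mu(g)| \;\leq\; C\|g\|_{\gamma}\sum_{k=1}^m (1-c_0)^{m-k}\eta^{\gamma(k-1)} \;+\; 2\|g\|_{\infty}(1-c_0)^m,
\]
which is of order $\|g\|_{C^{\gamma}}\theta^m$ for any $\theta\in\bigl(\max\{1-c_0,\eta^{\gamma}\},1\bigr)$.

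The main obstacle is the singularity structure of $F$: the sets $S_{\pm 1}$ cut unstable curves at every iteration, and the return time $\cR$ has only polynomial tails $\mu(\cR\geq n)\leq Cn^{-2}$ (cf.\ (\ref{muJfRn})), so without care one could lose too much mass each step to short pieces or to far-reaching excursions before any magnet can be formed. The Growth Lemma (Lemma \ref{growthlemma}) is precisely what makes the coupling work: it delivers the geometric decay $\vartheta^k$ of short pieces independently of the polynomial tail of $\cR$, which lets one choose the magnet scale geometrically in $k$ while still coupling a uniform fraction of the remaining mass at each step. Uniformity in $\cG$ (within the class of proper families) follows because all constants depend on $\cG$ only through $\cZ(\cG)\leq C_p$.
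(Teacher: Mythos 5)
The paper does not actually prove this lemma; it is quoted as \cite[Theorem~2]{CZ09} with no argument supplied. Your coupling-and-magnets outline is the strategy underlying that reference (the Chernov--Dolgopyat coupling construction), so you are reconstructing the same route rather than proposing a different one, and the two sources of the exponential rate --- geometric decay of the uncoupled mass and stable contraction of coupled pairs --- are correctly identified, as is the need to iterate $n_0$ times first so that the H\"older bound (\ref{DHC-}) can be applied along unstable curves.

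One place where your plan compresses too much: the positivity and uniformity of the per-step coupling rate $c_0$ do not follow from the Growth Lemma (Lemma~\ref{growthlemma}) alone. The Growth Lemma controls fragmentation --- it bounds the measure carried by unstable curves that are too short --- but the existence of a magnet (a rectangle with hyperbolic product structure which a definite fraction of the image of every sufficiently long unstable curve fully crosses within a bounded number of iterates) is a separate mixing-type property of $F$ that has to be established independently; without it there is nothing to couple against. Relatedly, the construction in \cite{CZ09} couples a fixed fraction per \emph{block} of some fixed length $N$ rather than at every single step, which only changes the constant $\theta$, not the exponential form, but stating it per step overstates what the argument actually delivers. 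Finally, your error bound needs a small supplement to account for coupled pairs $(F^{m-k}x,F^{m-k}y)$ that, though close, land in different atoms of $\sigma(\cR\circ F^{n_0})$ where (\ref{DHC-}) does not apply; the standard fix is that the measure of points within distance $C\eta^{m-k}$ of the boundaries of those atoms is itself exponentially small, contributing only $\|g\|_\infty$ times another $\theta^m$.
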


We note here that in the sections that follow we are presenting examples of applications that demonstrate that our
main theorems hold, since in most cases the central limit theorem has been proved for the following billiards.
This was done by B\'alint and Gou\"ezel for stadia \cite{GB}, by B\'alint, Chernov, and Dolgopyat in the
case of dispersing billiards with cusps \cite{BCD}, and by Sz\'asz and
Varj\'u in the case of Lorentz gas with infinite
horizon \cite{SV07}. We believe that our method is applicable to a wide variety of systems and
will be useful in determining relevant variances and diffusion coefficients in those systems; we
intend to demonstrate this claim in subsequent sections.

\subsection{Maps with linear spreading property}
In this subsection, we consider three types of systems which generate stochastic processes with the linear spreading property as defined below.

 We say that a dynamical system has the \textbf{linear spreading property} for the one-step transition if the following three conditions are true for large enough  $m,n$:

\noindent(\textbf{L1}) Assume that there exists $\beta>1$ such that $FM_m$ only intersects  those cells $M_n$
with index $n\in \B_m\colon= [m/\beta+c_1, \beta m +c_2]$ for some constants
$c_1, c_2>0$. We also assume that
\beq\label{thetabeta}
\theta:=\frac{2\ln\beta}{\beta-\beta^{-1}}<1;\eeq\\
(\textbf{L2}) Assume there exist $c_M>0$  such that the measure  $$\lim_{m\to\infty}  m^2\mu(\cR(x)> m)=c_M;$$

\noindent(\textbf{L3}) Assume the cell $M_m$ has length $\sim m^{-1}$ and width $\sim m^{_2}$, then transition probability from $M_m$ to $M_n$
satisfies
\beq\label{Markov}p_{m,n}\colon =\mu(M_n|F(M_m))=\frac{c_0m}{n^2}+ c(m,n)m^{-2},\eeq
for any $i\geq 1$,
where $c_0=\left[\beta-\beta^{-1}\right]^{-1}$ is the normalizing constant and $c(m,n)$ is uniformly bounded.  \\

\noindent(\textbf{L4}) Assume for $k=1,\cdots, n-1$, for $x$ belongs to any unstable manifold $W^u\subset M_n$, the unstable manifolds $T^kW^u$ at $T^k x$ are expanded under $T$ by a factor $1+\lambda_k$, with
\beq\label{lambda1}\lambda_k= \frac{C}{k}+o(n^{-1}).\eeq

We first prove the H\"{o}lder continuity of $\tf$.
We now estimate the H\"{o}lder norm of $\tf$, for any $f\in \cH_{\gamma}$.
\begin{lemma}\label{Holdertf}
For any $\gamma\in (0,1)$, any $f\in \cH_{\gamma}$, any $n\geq 1$, any $x,y\in M_n$, the induced function $\tf$ has H\"{o}lder norm satisfies the following condition:
\beq\label{holdertf}
|\tf(x)-\tf(y)|\leq C\|f\|_{\gamma}  n^{1+\gamma}d(x,y)^{\gamma}
\eeq
where $C=C(\gamma)>0$ is a constant.
\end{lemma}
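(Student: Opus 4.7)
The plan is to bound $|\tilde f(x)-\tilde f(y)|$ by unpacking the definition of $\tilde f$ as a Birkhoff sum and controlling how the distance $d(T^k x, T^k y)$ evolves under the action of $T$ inside the cell $M_n$. By the triangle inequality and the H\"older hypothesis on $f$, the key estimate to prove reduces to
\[
|\tilde f(x)-\tilde f(y)|\le\|f\|_{\gamma}\sum_{k=0}^{n-1} d(T^k x, T^k y)^{\gamma},
\]
so it suffices to show that $d(T^k x, T^k y)\le C_1\,k\,d(x,y)$ uniformly for $k=0,1,\ldots,n-1$ and $x,y\in M_n$, after which the desired bound $C\,\|f\|_{\gamma}\, n^{1+\gamma} d(x,y)^{\gamma}$ follows at once from $\sum_{k=1}^{n}k^{\gamma}\asymp n^{1+\gamma}/(1+\gamma)$.

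To control $d(T^k x, T^k y)$, I would first treat the typical case that $x$ and $y$ lie on a common unstable manifold $W^u\subset M_n$. Assumption (\textbf{L4}) then yields, by induction on $k$,
\[
d(T^k x, T^k y)\le d(x,y)\prod_{j=1}^{k}(1+\lambda_j),\qquad \lambda_j=\tfrac{C}{j}+o(n^{-1}).
\]
Taking logarithms and using $\log(1+t)\le t$ gives
\[
\log \prod_{j=1}^{k}(1+\lambda_j)\le\sum_{j=1}^{k}\Bigl(\tfrac{C}{j}+o(n^{-1})\Bigr)= C\ln k+O(1)+k\cdot o(n^{-1}),
\]
and since $k\le n$ the tail contribution $k\cdot o(n^{-1})$ is $o(1)$. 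This yields $\prod_{j=1}^{k}(1+\lambda_j)\le C_1 k$, where the constant $C$ in (\textbf{L4}) is calibrated by the geometric consistency with (\textbf{L3}) (cells of length $\sim n^{-1}$ must expand to order one under $T^n$, forcing $C=1$). This is the numerically delicate point, and I expect it to be the main obstacle: the naive estimate $\prod(1+C/j)\sim k^{C}$ would give a worse exponent $n^{1+C\gamma}$, so the argument must exploit the precise scaling between cell width/length and expansion rate dictated by (\textbf{L3})--(\textbf{L4}).

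For the remaining case, when $x$ and $y$ are in $M_n$ but not on the same unstable leaf, I would join them by a polygonal path consisting of an unstable segment and a stable segment inside $M_n$. Along the unstable segment the previous estimate applies. Along the stable segment distances are only contracted by $T$, so $d(T^k x', T^k y')\le d(x',y')$ at every step, and the contribution to the sum is bounded by $n\cdot d(x,y)^{\gamma}\le n^{1+\gamma}d(x,y)^{\gamma}$. Combining the two pieces via the triangle inequality gives the stated bound
\[
|\tilde f(x)-\tilde f(y)|\le C\,\|f\|_{\gamma}\, n^{1+\gamma}\, d(x,y)^{\gamma},
\]
with a constant $C=C(\gamma)$ depending only on $\gamma$ and the constants in (\textbf{L3})--(\textbf{L4}).
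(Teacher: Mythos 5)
Your proposal is essentially the same argument as the paper's: reduce to an unstable curve $W\subset M_n$, apply the expansion factor $\prod_{l\le k}(1+\lambda_l)$ from (\textbf{L4}), and deduce $d(T^k x,T^k y)\le Ck\,d(x,y)$ so that $\sum_{k<n}(k\,d(x,y))^\gamma\asymp n^{1+\gamma}d(x,y)^\gamma$. You are right to flag the calibration of the constant in $\lambda_k\sim C/k$ as the delicate point: the paper resolves it implicitly by invoking the bound $|FW|\le Cn^{-1}$ together with the cell geometry from (\textbf{L3}) (so the cumulative expansion across the cell is forced to be of order $n$, pinning $\prod_{l\le k}(1+\lambda_l)\lesssim k$), whereas you make the same constraint explicit, which is a worthwhile clarification; your separate treatment of the stable segment in the polygonal decomposition is likewise a small tightening of the paper's terse ``we can assume $x,y$ lie on one unstable curve.''
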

\begin{proof} For any $n\geq 1$, any $x,y\in M_n$,
$$|\tf(x)-\tf(y)|\leq \sum_{k=0}^{n-1}\|f\|_{\gamma} d(T^k x, T^k y)^{\gamma}$$
The images $\{T^k(M_n), k=1,\cdots, n-1\}$  keep stretching in the unstable
direction and shrinking in the stable direction, as $k$ increases, thus we can assume that $x, y$ lie on one unstable curve $W\subset M_n$. By (\textbf{L4}), for $k=1,\cdots, n-1$, the unstable manifolds $T^kW$ at $T^k x$ are expanded under $T$ by a factor $1+\lambda_k$, with
$\lambda_k\sim 1/k.$

We know that the length of $FM_n$ is of order $\cO(n^{-1})$. Using the fact that $FW$ is stretched in $FM_n$, thus its  length satisfies
\beq\label{|Fw|}|FW|\leq C n^{-1}\eeq
Moreover, by the distortion bound, we have for $m\in [1,n-1]$,
$$d(T^k m, T^m y)\leq C_1d(x,y)\prod_{l=1}^{k}(1+\lambda_l)$$
for some constant $C_1>0$. Combining with (\ref{lambda1}) and (\ref{|Fw|}), we get,
$$d(T^m x, T^m y)\leq C m d(x,y)$$
Combining the above facts, we have
\begin{align*}
|\tf(x)-\tf(y)|&\leq \sum_{k=0}^{n-1}\|f\|_{\gamma} d(T^kx,T^ky)^{\gamma}\\
&\leq \|f\|_{\gamma} C_1 d(x,y)^{\gamma} n^{1+\gamma}.\end{align*}
This implies that $\tf$ has H\"{o}lder norm of order $n^{1+\gamma}$.

\end{proof}

We then prove a lemma, which verifies (\textbf{B4}).

Let $\cM_{i,j}=\cup_{m=i}^j M_m$  be the union of cells with indices satisfying $1\leq i\leq j$.
Denote
\beq\nn \label{eq: function sequence}
{\tf}_{i,j}:=\tf\cdot \bI_{M_{i,j}}-\mu(\tf\cdot \bI_{\cM_{i,j}})
\eeq
\begin{proposition}[Exponential decay of correlations for $\tf_{i,j}$]\label{boundedcovtf}
Let $\tf$ be an induced function on $M$, and $\tf_{i,j}$ is as defined above for any $1\leq i\leq j< \sqrt{n\ln\ln n}$. Then
for all $k\leq 2\ln n$,
\beq\label{eq:condition D_r(u_n)2f}
|\mu({\tf}_{i,j}\circ F^k\cdot{\tf}_{i,j})-\mu({ \tf}_{i,j})^2|\leq C\ln n\cdot \theta^k.
\eeq
where  $\theta\in (0,1)$ is a constant. On the other hand, for $k\geq 2\ln n$,
\beq\label{eq:condition D_r(u_n)2f6}
|\mu({\tf}_{i,j}\circ F^k\cdot{\tf}_{i,j})-\mu({ \tf}_{i,j})^2|\leq C\theta^{k/2}.
\eeq
\end{proposition}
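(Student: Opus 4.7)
The plan is to split the analysis at $k=2\ln n$ and use two different mechanisms: direct exponential mixing from \cite{CZ09} for large $k$, and the autoregressive structure induced by the transition probabilities (L3) for small $k$. First, I would collect the following preliminary bounds on $\tf_{i,j}$. Since $\mu(\tf_{i,j})=0$, the quantity we must bound is just $|\cov(\tf_{i,j}\circ F^k,\tf_{i,j})|$. By Lemma~\ref{Holdertf}, $\tf$ is H\"older of norm $\leq C m^{1+\gamma}$ on each cell $M_m$, so $\tf_{i,j}$ lies in $\cH_\gamma$ with $\|\tf_{i,j}\|_{C^\gamma}\leq Cj^{1+\gamma}\leq C(n\ln\ln n)^{(1+\gamma)/2}$ measured on the connected components of $\sigma(\cR)$. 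Because $|\tf|\leq Cm$ on $M_m$ and $\mu(M_m)\asymp m^{-3}$ (from (L2)--(L3)), a direct computation gives the key $L^2$--bound $\|\tf_{i,j}\|_{L^2}^{2}\leq C\sum_{m=i}^{j}m^{-1}\leq C\ln n$ together with $\mathbb{E}|\tf_{i,j}|\leq C$.

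For the range $k\geq 2\ln n$ the plan is to apply the exponential decay of correlations (49) with $g=h=\tf_{i,j}$, yielding
\[
|\cov(\tf_{i,j}\circ F^k,\tf_{i,j})|\leq C\,\|\tf_{i,j}\|_{C^\gamma}^{2}\,\vartheta^{k}\leq C n^{1+\gamma}(\ln\ln n)^{1+\gamma}\vartheta^{k},
\]
where $\vartheta\in(0,1)$ is the decay rate of \cite{CZ09}. Since this rate can be made as small as we wish by passing to a fixed iterate of $F$, I would choose an iterate so that $\vartheta^{1/2}\leq\theta$ and $n^{1+\gamma}\vartheta^{k/2}$ is uniformly bounded for $k\geq 2\ln n$; this absorbs the polynomial growth into one copy of $\vartheta^{k/2}$ and leaves $C\theta^{k/2}$ as the bound, giving (\ref{eq:condition D_r(u_n)2f6}).

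For the range $k\leq 2\ln n$, the plan is to exploit the autoregressive structure that the linear spreading property forces. From (L3) with $x\in M_m$,
\[
\EE(\cR\circ F\mid\cF_0)(x)=\sum_{n\in\B_m} n\,p_{m,n}=c_0 m\sum_{n\in\B_m}\tfrac{1}{n}+O(1)=\tfrac{2c_0\ln\beta}{1}\,m+O(1)=\theta\cR(x)+O(1),
\]
using $\sum_{n\in\B_m}n^{-1}\to 2\ln\beta$ and the definition of $\theta$ in (L1). Extending this to $\tf$ via the Hölder control of Lemma \ref{Holdertf} (so that on each cell $\tf$ is close to its cell-averaged, piecewise-linear-in-$\cR$ part) produces the one-step relation $\EE(\tf\circ F\mid\cF_0)=\theta\tf+\epsilon$ with $\|\epsilon\|_\infty\leq C$. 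Iterating and summing the geometric series in $\theta$ gives
\[
\EE(\tf\circ F^k\mid\cF_0)=\theta^k\tf+\eta_k,\qquad \|\eta_k\|_\infty\leq C/(1-\theta).
\]
Since $\tf_{i,j}=\tf\bI_{\cM_{i,j}}-\mu(\tf\bI_{\cM_{i,j}})$ is mean zero and differs from $\tf$ only by an $\cF_0$-measurable constant on a set of $\mu$-measure one, inserting this into the covariance gives
\[
|\cov(\tf_{i,j}\circ F^k,\tf_{i,j})|\leq \theta^k\|\tf_{i,j}\|_{L^2}^2+C\,\mathbb{E}|\tf_{i,j}|\leq C\theta^k\ln n+C,
\]
and for $k\leq 2\ln n$ the additive constant is absorbed into $C\ln n\cdot\theta^k$ (enlarging $C$ if needed, since $\ln n\cdot\theta^{2\ln n}$ stays bounded below by a power of $n$ times $\ln n$ on the relevant range).

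The main obstacle I expect is controlling the boundary effects in the autoregressive step: the identity $\EE(\cR\circ F\mid\cF_0)=\theta\cR+O(1)$ relies on $\B_m\subset[i,j]$, which fails when $m$ is close to $j$. To handle this I would quantify the `escape probability' via the tail bound $\mu(\cR>m)\asymp m^{-2}$ coming from (L2), so that the contribution of cells near the boundary of $[i,j]$ is lower-order, as is the contribution of the truncation by $\bI_{\cM_{i,j}}$ when $F^k$ lands outside $[i,j]$. A secondary technicality is that $\tf$ (unlike $\cR$) is not $\cF_0$-measurable; here the variation of $\tf$ along unstable manifolds in $M_m$ together with the uniform H\"older norm $\|f\|_\gamma$ allows one to replace $\tf$ by its $\cF_0$-conditional average with error absorbed into the $O(1)$ term of the autoregression, after which the argument above proceeds unchanged.
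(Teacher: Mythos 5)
Your proposal takes a genuinely different route from the paper's. The paper never uses the autoregressive conditional-expectation structure in this proof; instead it splits $\tf_{i,j}=\tf_{i,p}+\tf_{p,q}+\tf_{q,j}$ at two intermediate levels $p<q$ chosen as functions of $k$ (specifically $q=\vartheta^{-k/d}$, $p=\sqrt q$ with $d=6(1+\gamma)$). The bounded piece $\tf_{i,q}$ is handled by the exponential decay estimate (\ref{correT}), while the three tail/cross terms involving $\tf_{q,j}$ or $\tf_{p,j}$ are bounded directly by the Growth Lemma via the estimate $\cZ(F\cG_{i,j})\le C_1 i$ together with $\mu(F^k(\cM_{i,j})\cap\cM_{m,l})\lesssim(\vartheta^{k-1}i+C'')m^{-2}i^{-2}$. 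Letting the truncation level $q$ grow slowly with $k$ is exactly what trades the polynomial dependence on $n$ in $\|\tf_{i,j}\|_{C^\gamma}$ for a slightly worse geometric rate, with a rate $\theta$ that does not require $\vartheta$ to be small.

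Your argument as written has two concrete gaps. First, for $k\ge 2\ln n$ you apply (\ref{correT}) directly to the untruncated $\tf_{i,j}$ with $\|\tf_{i,j}\|_{C^\gamma}^2\asymp(n\ln\ln n)^{1+\gamma}$, and then claim that ``passing to a fixed iterate'' makes $\vartheta$ small enough that $n^{1+\gamma}\vartheta^{k/2}$ stays bounded on $k\ge 2\ln n$. But the per-$F$-step decay rate is a fixed system constant: replacing $F$ by $F^{N_0}$ replaces $\vartheta$ by $\vartheta^{N_0}$ only if one simultaneously rescales time by $N_0$, so the bound at $F$-time $k$ is still $\vartheta^k$. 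At the endpoint $k=2\ln n$ you would need $n^{1+\gamma+\ln\vartheta}(\ln\ln n)^{1+\gamma}$ to be bounded, i.e.\ $\vartheta\le e^{-(1+\gamma)}$, which is not something one can arrange. Unless the threshold $2\ln n$ is enlarged to $K_0\ln n$ with $K_0$ depending on $\vartheta$ and $\gamma$, this step fails. Second, in the regime $k\le 2\ln n$ your autoregressive iteration yields $\EE(\tf_{i,j}\circ F^k\mid\cF_0)=\theta^k\tf_{i,j}+\eta_k$ with $\|\eta_k\|_\infty\le C/(1-\theta)$, and the resulting bound is $\theta^k\ln n+C$. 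The additive $C$ cannot be absorbed into $C\ln n\cdot\theta^k$: at $k$ close to $2\ln n$, $\ln n\cdot\theta^{2\ln n}=\ln n\cdot n^{2\ln\theta}\to 0$, whereas $C$ stays bounded below; your parenthetical ``stays bounded below by a power of $n$ times $\ln n$'' has the inequality the wrong way around. To make the autoregressive route work one would need to show that $\eta_k$ decorrelates from $\tf_{i,j}$ at a geometric rate (not merely that $\|\eta_k\|_\infty$ is bounded), which requires precisely the kind of measure/mixing estimates the paper derives via the Growth Lemma. There is also an unaddressed mismatch between the one-step relation (which is formulated for $J_f$, i.e.\ for $\cR$-linear observables, and with a global cut $|J_f|<c_n$) and the object $\tf\,\bI_{\cM_{i,j}}$ you actually need, for which the error $E=\cO(\cR^{1-\gamma})$ and the $[i,j]$-truncation enter the conditional expectation nontrivially.
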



\begin{proof}
For any  set $M_m$ in $\cM_{i,j}$, we foliate it into unstable curves  that stretch completely from one side to the other. Let $\{W_{\beta}, \beta\in \cA, \lambda\}$ be the foliation, and $\lambda$ the factor measure defined on the index set $\cA$.  This enables us to define a standard family, denoted as $\cG_{i,j}=(\cM_{i,j},
\mu_{i,j})$, where $\mu_{i,j}:=\mu|_{\cM_{i,j}}$. 

Our first step in proving the decay of correlations is to estimate the $\cZ$ function of $F^k\cG_{i,j}$.
According to assumption (\textbf{L3}), $FM_m$ is a strip that has length $\sim m^{-1}$ and width $\sim m^{-2}$. Also, by construction, the density of $\mu_{i,j}$ is of order $1$ on $ FM_m$. Thus we obtain for $j< \infty$,

\begin{align*}
\cZ(F\cG_{i,j})&=\mu(\cM_{i,j})^{-1}\int_{\beta\in\cA}|FW_{\beta}|^{-1}\,\lambda_{\cG_{i,j}}(d\beta)\\
&\le C
\mu(\cM_{i,j})^{-1}\cdot \sum_{m=i}^{j} m^{-2}\\
&= C_1 i^2\cdot i^{-1}=C_1 i.\end{align*}

Note that for any large $m\leq l$, we have
$$\mu(F\cM_{i,j}\cap \cM_{m,l})\leq F_*\mu_{i,j}(r<\eps_{m,l})$$
where $\eps_{m,l}$ is approximately the length of the smallest cell in $\cM_{m,l}$, which is of order $m^{-2}$.
Using  Lemma \ref{growthlemma2}, we have that
\begin{align}\label{estMiMk}
\mu(F^k(\cM_{i,j})\cap \cM_{m,l})&\leq F^k_*\mu_{i,j}(r<\eps_{m,l})\nonumber\\
&\leq C'(\vartheta^{k-1}\cZ(FG_{i,j})+C'')\eps_{m,l}\mu(\cM_{i,j})\nonumber\\
&\leq C(C_1\|f\|_{\infty}\vartheta^{k-1}i+C'')m^{-2} i^{-2}
\end{align}

For any fixed large $k$, we truncate $ \tf_{i,j}$ at two extra levels, with $i\leq p<q\leq j$, which will be chosen later, i.e.
$$ \tf_{i,j}= \tf_{i,p}+ \tf_{p,q}+ \tf_{q,j}$$

The function $ \tf_{i,q}= \tf_{i,p}+ \tf_{p,q}$ is bounded with $\|\tf_{i,q}\|_{\infty}\leq C\|f\|_{\infty} q$, and H\"{o}lder norm $\| \tf_{i,q}\|_{\gamma}\leq C\|f\|_{\gamma} q^{1+\gamma}$. Thus by (\ref{correT}), we know that
\beq\label{iqiq}
\mu( \tf_{i,q}\circ F^k,  \tf_{i,q})\leq C\| \tf_{i,q}\|_{C^{\gamma}}^2 \theta^k +\mu( \tf_{i,q})^2\leq C q^{2+2\gamma}\theta^k +\cO(q^{-2})
\eeq
where we used the fact that
$$\mu( \tf_{i,q})=-\mu( \tf_{q,j})=\cO(q^{-1})$$

Next we estimate
\begin{align*}
&\mu( \tf_{i,p}, \tf_{q,j}\circ F^k)\leq C\|f\|_{\infty}^2\sum_{m=i}^p\sum_{l=q}^{j} m\cdot l\cdot \mu(M_l\cap F^k M_m)\\
&=C\|f\|_{\infty}^2\sum_{m=i}^p\sum_{l=q}^j m\sum_{s=1}^l \mu(M_l\cap F^k M_m)\\
&=C\|f\|_{\infty}^2\sum_{m=i}^p\sum_{t=1}^m
\left(\sum_{s=1}^q \sum_{l=q}^j  \mu(M_l\cap F^k M_m)+\sum_{s=q}^j \sum_{l=s}^j  \mu(M_l\cap F^k M_m)\right)\\
&=C\|f\|_{\infty}^2\sum_{m=i}^p\sum_{t=1}^m
\left(q\cdot \mu(\cM_{q,j}\cap F^k M_m)+\sum_{s=q}^j   \mu(\cM_{s,j}\cap F^k M_m)\right)\\
&=C\|f\|_{\infty}^2\sum_{t=1}^i
\left(q\cdot \mu(\cM_{q,j}\cap F^k \cM_{i,p})+\sum_{s=q}^j   \mu(\cM_{s,j}\cap F^k \cM_{i,p})\right)\\
&+C\|f\|_{\infty}^2\sum_{t=i}^p
\left(q\cdot \mu(\cM_{q,j}\cap F^k \cM_{t,p})+\sum_{s=q}^j   \mu(\cM_{s,j}\cap F^k \cM_{t,p})\right)\\
&=C\|f\|_{\infty}^2
\left( (C_1\|f\|_{\infty}\vartheta^{k-1}i+C'') q^{-1}i^{-1}+\sum_{s=q}^j (C_1\|f\|_{\infty}\vartheta^{k-1}i+C'')s^{-2} i^{-1} \right)\\
&+C\|f\|_{\infty}^2\sum_{t=i}^p
\left((C_1\|f\|_{\infty}\vartheta^{k-1}t+C'') t^{-2}q^{-1}+\sum_{s=q}^j  (C_1\|f\|_{\infty}\vartheta^{k-1}t+C'')s^{-2} t^{-2}\right)\\
\end{align*}
Then one can check that
\beq\label{lesalpha07}
\mu( \tf_{i,p}, \tf_{q,j}\circ F^k)=\cO(q^{-1}(C_1\vartheta^k \ln p+C_2i^{-1}))
\eeq

Similarly, we can show that
\beq\label{lesalpha000}
\mu( \tf_{q,j}, \tf_{i,p}\circ F^k)=\cO(q^{-1}(C_1\vartheta^k \ln j+C_2q^{-1}))
\eeq

Next, we estimate
\begin{align*}
&\mu( \tf_{p,j}, \tf_{p,j}\circ F^k)\leq C\|f\|_{\infty}^2\sum_{m=p}^j\sum_{l=p}^j m\cdot l\cdot \mu(M_l\cap F^k M_m)\\
&=C\|f\|_{\infty}^2\sum_{t=1}^p
\left(p\cdot \mu(\cM_{p,j}\cap F^k \cM_{p,j})+\sum_{s=p}^j   \mu(\cM_{s,j}\cap F^k \cM_{p,j})\right)\\
&+C\|f\|_{\infty}^2\sum_{t=p}^j
\left(p\cdot \mu(\cM_{p,j}\cap F^k \cM_{t,j})+\sum_{s=p}^j   \mu(\cM_{s,j}\cap F^k \cM_{t,j})\right)\\
&=C\|f\|_{\infty}^2
\left( (C_1\|f\|_{\infty}\vartheta^{k-1}p+C'')p^{-2} +\sum_{s=p}^j (C_1\|f\|_{\infty}\vartheta^{k-1}p+C'')s^{-2} p^{-1} \right)\\
&+C\|f\|_{\infty}^2\sum_{t=p}^j
\left((C_1\|f\|_{\infty}\vartheta^{k-1}t+C'')p^{-1} t^{-2}+\sum_{s=p}^j  (C_1\|f\|_{\infty}\vartheta^{k-1}t+C'')s^{-2} t^{-2}\right)\\
\end{align*}

Then one can check that
\beq\label{lesalpha04}
\mu( \tf_{p,j}, \tf_{p,j}\circ F^k)=\cO(p^{-1}\vartheta^k \ln j+ p^{-2})=\cO(p^{-1}\vartheta^k \ln n+ p^{-2})
\eeq

Combining the above estimations, we have
\begin{align*}
\mu( \tf_{i,j}, \tf_{i,j}\circ F^k)&\leq C_1 \vartheta^k(q^{2+2\gamma}+ p^{-1}\ln n)+C_2 p^{-1}\end{align*}

Now we choose $q=\vartheta^{-k/d}$, $p=\sqrt{q}$, where $d=6(1+\gamma)$. Then above estimations implies that
$$\mu( \tf_{i,j}, \tf_{i,j}\circ F^k)\leq C_1 \theta^k\ln n,$$
where $\theta=\vartheta^{2/3}$.
Note that for $k>2\ln n$,
$$\theta^k\ln n=\theta^{k/2} \theta^{k/2}\ln n\leq \theta^{k/2} \ln n/n^{|\ln \theta|}\leq \theta^{k/2}.$$

\end{proof}

In this subsection we will show that it follows from Theorem \ref{A5con} that the CLT holds for certain observable $f$ on dynamical systems with the linear spreading property.
\begin{proposition}\label{propRCLT} For any  observable $f$ on $\cM$, assume  $\tf\in \cH_{\gamma}$ for some $\gamma\in (0,1)$. Assume \textbf{(B1)} holds, with $J_f=I_f\cR$, such that $I_f\neq 0$ is a constant. Moreover we assume the induced map $(F,M)$ satisfies \textbf{(L1)-(L4)}. Then the  CLT holds:

\beq\label{ClTZtfl}\lim_{n\to\infty}\frac{\tf+\cdots +\tf\circ F^{n-1}-n\mu(\tf)}{\tilde\sigma_{R} I_f \sqrt{ n\ln n}}= \N(0,1)\eeq
converges in distribution, as $n\to\infty$, where $$\tilde\sigma_{R} ^2:=\lim_{n\to\infty} n\mu_{\cM}(x\in \cM\,:\, \R\geq n)=\frac{1+\theta}{1-\theta}\cdot  c_M . $$
In addition, we have
\beq\label{ClTZfl}\lim_{n\to\infty}\frac{f+\cdots +f\circ T^{n-1}-n\mu_{\cM}(f)}{\sigma_f   \sqrt{ n\ln n}}= \N(0,1)\eeq
converges in distribution, as $n\to\infty$, where \beq\label{sigmaf1}\sigma_f=  I_f\sqrt{\mu_{\cM}(M)}\cdot \tilde\sigma_R. \eeq\end{proposition}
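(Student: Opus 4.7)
The plan is to deduce Proposition \ref{propRCLT} from Theorem \ref{A5con} by verifying hypotheses \textbf{(B2)}--\textbf{(B4)}, since \textbf{(B1)} is given by assumption with $N = 1$, $a_1 = I_f$, $U_{1,1} = M$, and $E = \tf - I_f\cR$. Once these three conditions are in place, Theorem \ref{A5con} yields both (\ref{ClTZtfl}) and (\ref{ClTZfl}) once we identify $c_{M,f} = I_f^2 c_M$, giving the stated form of $\tilde\sigma_R$ and $\sigma_f$.

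Verification of \textbf{(B2)} is immediate: since $J_f = I_f\cR$, we have $\mu(|J_f|\geq n) = \mu(\cR \geq n/|I_f|)$, so \textbf{(L2)} gives $\lim_{n\to\infty} n^2 \mu(|J_f|\geq n) = I_f^2 c_M =: c_{M,f}$. For \textbf{(B3)}, the main computation is to evaluate $\EE(\cR\circ F \mid \cF_0)$ on each cell $M_m$ using the transition probabilities from \textbf{(L3)}. Since $FM_m$ meets only $M_n$ with $n \in \B_m = [m/\beta + c_1, \beta m + c_2]$,
\begin{align*}
\EE(\cR\circ F \mid \cF_0)\big|_{M_m} \;=\; \sum_{n\in\B_m} n\, p_{m,n} \;=\; c_0 m \sum_{n\in\B_m}\frac{1}{n} \;+\; O(1) \;=\; c_0 m \ln(\beta^2) \;+\; O(1) \;=\; \theta m \;+\; O(1),
\end{align*}
where $c_0 = (\beta - \beta^{-1})^{-1}$ and $\theta = 2c_0\ln\beta = 2\ln\beta/(\beta-\beta^{-1})$ matches (L1). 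Multiplying by $I_f$ gives $\EE(J_f\circ F\mid \cF_0) = \theta J_f + \cE_0$ with $\cE_0 \in \cF_0$ bounded uniformly in $m$. The truncated version $\EE((J_f\bI_{|J_f|<c_n})\circ F\mid \cF_0) = \theta J_f\bI_{|J_f|<c_n} + \cE_{n,0}$ differs only by a contribution from the removed cells $\{m \geq c_n/|I_f|\}$, which is controlled by $\mu(\cR \geq c_n/|I_f|) = \cO(c_n^{-2})$ thanks to (L2) and hence fits into the bounded correction $\cE_{n,0}$.

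For \textbf{(B4)}, observe that any $\tg_{n,k}\in \cH_\gamma$ with $\tg_{n,k}\leq c_n$ is, up to centering, supported on the union of cells $\cM_{i,j}$ with $j \lesssim c_n$. Lemma \ref{Holdertf} bounds the H\"older norm of such an induced function on each cell by $m^{1+\gamma}\|g\|_\gamma$, and Proposition \ref{boundedcovtf} (applied to $\tf_{i,j}$) then provides exactly the two-regime covariance estimates required in \textbf{(B4)}: $C\theta^k\ln n$ for $k\leq 2\ln n$ and $C\theta^{k/2}$ for $k > 2\ln n$. Summing these as in the remark after (B4) yields the $\cO(n)$ bound on $\sum_{k=1}^{n-1}(n-k)\cov(\cE_{n,0}\cdot \cE_{n,0}\circ F^k)$, so both \textbf{(A2)} and \textbf{(A3)} hold after the identification provided by \textbf{(B3)}. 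The main obstacle is making the truncation argument in \textbf{(B3)} clean: one must keep the $O(1)$ remainder from the finite-sum approximation to $\sum_{n\in\B_m} 1/n$ together with the truncation error $\mu(|J_f|\geq c_n)$ inside a single process $\{\cE_{n,0}\}$ whose covariances still satisfy the tight bound demanded by \textbf{(B4)}. With $c_{M,f} = I_f^2 c_M$ substituted into the conclusions of Theorem \ref{A5con}, we read off $\tilde\sigma_R^2 = \tfrac{1+\theta}{1-\theta}c_M$ and $\sigma_f = |I_f|\sqrt{\mu_{\cM}(M)}\,\tilde\sigma_R$, which completes the proof.
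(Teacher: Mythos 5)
Your proof is correct and takes essentially the same route as the paper: both reduce to Theorem \ref{A5con} by verifying \textbf{(B2)} from \textbf{(L2)} and \textbf{(B3)} from the one-step transition computation $\sum_{n\in\B_m} n\,p_{m,n} = \theta m + O(1)$ using \textbf{(L3)}. You are actually slightly more careful than the paper's written proof, which states that the CLT follows from Theorem \ref{A5con} after only explicitly checking \textbf{(B2)--(B3)}; you additionally spell out that \textbf{(B4)} is supplied by Lemma \ref{Holdertf} together with Proposition \ref{boundedcovtf}, which the paper leaves implicit (having just proved that proposition under \textbf{(L1)--(L4)}).
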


\noindent{\textbf{Remark.}}
\begin{itemize}
\item[(1)] Note that one can extend $\R$ to the entire phase space $\cM$, by defining $$\R(x)=\min\{n\geq 1\,:\, T^n x\in M\},$$ which can be viewed as the first hitting time.  One can check that
\begin{align*}
 c_M&=\lim_{n\to\infty}n^2\mu(x\in M\,:\,\R\geq n)\\
 &=\frac{1}{\mu_{\cM}(M)}\lim_{n\to\infty}n\mu_{\cM}(x\in \cM\,:\, \R\geq n),
 \end{align*}
which implies that $$\lim_{n\to\infty}n\mu_{\cM}(x\in \cM\,:\, \R\geq n)=c_M\mu_{\cM}(M).$$ Consequently, the supperdiffusion constant $\sigma_f$ defined below as (\ref{sigmaf1}) is closely related to the tail distribution $\mu_{\cM}(x\in \cM\,:\, \R\geq n)$:
\beq\label{sigmaf2R}\sigma_f^2=\frac{1+\theta}{1-\theta}\cdot  I_f^2 \cdot \lim_{n\to\infty}n\mu_{\cM}(x\in \cM\,:\, \R\geq n). \eeq

\item[(2)] Note that if $f=1-\mu(\R)\bI_{M}$, then   $\tf=\R-\mu(\R)$, with $I_f=1$, so the diffusion constants for the processes $\{f\circ T^n\}$ and $\{\tf\circ F^n\}$ are given by:
$$\sigma_{f}^2=\frac{1+\theta}{1-\theta}\cdot  \lim_{n\to\infty}n\mu_{\cM}(x\in \cM\,:\, \R\geq n) ,\,\,\,\,\,\,  \tilde\sigma_{f}^2=\sigma_1^2/\mu_{\cM}{(M)}.$$
This implies that heavier tail for $\cR$ implies faster diffusion for these processes.
Note that $N_n:=\bI_M(x)+\cdots +\bI_M(T^{n-1}x)$ is the number of returns to $M$ within $n$ iterations under $T$, with $\mu_{\cM}(N_n)=n\mu_{\cM}(M)$. Thus the CLT for $f$ implies that
 for any $z>0$,
$$\mu_{\cM}\left(\frac{N_n-\mu_{\cM}(N_n)}{\mu_{\cM}(M)\sigma_{f_0}\sqrt{  n \ln n}}>z\right)\to \int_{z}^{+\infty} \frac{1}{\sqrt{2\pi}} e^{-\frac{x^2}{2}} dx,$$ as $n\to\infty$.

\item[(3)] Note that for any $x\in M_n$, $\frac{\tf}{\R(x)}=I_f+E(x)/n$, which implies that
$$I_f=\lim_{n\to\infty} \frac{\tf}{n}=\lim_{n\to\infty} \frac{f(x)+\cdots+f(T^{n-1}x)}{n}.$$
As $n\to\infty$, the sequence $f(x), \ldots, f(T^{n-1}x)$ accumulates to the set $\R^{-1}(\infty)$ in the phase space. Thus  $I_f$ can be viewed as the ``average value" of $f$ in the set $\R^{-1}(\infty)$.
Thus  processes $\{f\circ T^n\} $ generated by $f$  with more weight on $\R^{-1}(\infty)$ will have faster diffusion.  This property holds for all dynamical systems with the algebraic spreading property \textbf{(L1)-(L3)}. Examples include Bunimovich stadia and Skewed stadia. \end{itemize}

 Next we give the proof of  Proposition \ref{propRCLT}.

\begin{proof}
We define $J_f=I_f \cR$, with $I_f\neq 0$ being a constant.
Define $$X_n:=J_f\circ F^n-\mu(J_f),\,\,\,\,\,\,n\geq 0.$$
Next we will show that if a system has the linear spreading property, then the process $\{X_n\}$ satisfies (\textbf{B2-B3}).

We choose
 a positive increasing sequence $\{c_n\}$ such that $c_n=\sqrt{n\ln\ln n}$; then, $\lim_{n\to\infty} c_n=\infty$. Now (\textbf{B2})  follows from (\textbf{L2}):
 \begin{align*}
 \lim_{t\to\infty} \frac{\mathbb{E}(J_f^2\cdot \bI_{|J_f|<c_n})}{\ln n}&=\lim_{n\to\infty} n^2\mu(|J_f|>n)\\
&=\lim_{t\to\infty}n^2 I_f^2\mu(\R>n)=c_M I_f^2.
\end{align*}

Next, we verify condition \textbf{(B3)}.
Since $\{\R \circ F^i\}$ is stationary, it is enough to calculate the initial one-step
conditional expectations. For $m$ large, we have
\begin{align}\label{eq:linspr1}
\EE(\R\circ F(x) | \R(x) = m) &= \sum\limits_{n\in\B_m} n\cdot\left(\frac{c_0m}{n^2} + c(m,n)m^{-2}\right)\1_{M_m}(x)\nonumber\\
&=\left(\frac{2m\ln\beta}{\beta-\beta^{-1}}  + \sum_{n\in\B_m}  c(m,n)n m^{-2}\right)\1_{M_m}(x)\nonumber\\
&=(\theta m+e_m)\1_{M_m},
\end{align}
where $e_m=\sum_{n\in\B_m}  c(m,n)n m^{-2}\leq C_e$. Let $a=\mu(\R)$.
We define $X_{n,0}=(\R-a) \cdot\bI_{\cR<c_n+a})$ and
\begin{align}\label{EEcE1}
\cE_{n,0}&=\EE((\R-a) \cdot\bI_{\cR<c_n+a}) \circ F |\R)-\theta (\R-a)\cdot\bI_{\cR<c_n+a}\nonumber\\
&=\sum_{m=1}^{\beta(c_n+a)} e_m \bI_{M_m}-a\left(\EE(\bI_{\cR<c_n+a} \circ F |\R)-\theta \cdot\bI_{\cR<c_n+a}\right).\end{align}
Note that $\cE_{n,0}$ is uniformly bounded and  $\sigma(\R)$-measurable.
This verifies (\textbf{B3}).

Now the CLT follows from Theorem \ref{A5con}.
\end{proof}

Next, we study two systems that have the linear spreading property. More precisely, we will verify conditions \textbf{(B1)}, \textbf{(L1)-(L4)}, and the fact that  for any Holder observables $f$ on $\cM$,  $c_{M,f}=I_f\cdot c_M$  for these systems.

\subsubsection{Stadia} The stadium billiard table, introduced by Bunimovich in 1974 \cite{Bu74}, is comprised of two equal
semicircles which are connected by two parallel lines.
 Let $\bl>0$. We consider a region in the plane delimited by two semicircles of radius 1, joined by
two horizontal segments of length $\bl$, tangent to the semicircles. To a point on the boundary of this
set and a vector pointing inwards, we associate an image by the usual billiard reflection law. This
defines the stadium billiard map $T:\cM\to\cM$. This map admits a unique absolutely
continuous
invariant probability measure $\mu_{\cM}$.
A point in the phase space $x\in\cM$ is given by $(r,\varphi)$ where $r\in[0, 2\pi+2\bl]$ is the position on the
boundary, and $\varphi\in [-\pi/2,\pi/2]$ is the angle with respect to the normal to this boundary at $r$. The invariant measure $\mu_{\cM}$ is given by
$$d\mu_{\cM}(r,\varphi)=\frac{\cos\varphi\,dr\,d\varphi}{4(\pi+\bl)}.$$

Dynamics on the stadium have been shown to be
nonuniformly hyperbolic, ergodic, and mixing; for some discussion of these facts see
\cite{Bu74, Bu79, CM}. Chernov and Zhang proved in \cite{CZ08} that correlations in
stadia decay polynomially, in fact, they decay as $\O(1/n)$. As a consequence, billiards in stadia do not
satisfy the classical central limit
theorem.

The induced billiard map on stadia is discussed extensively in \cite{CM}. We let
$M \subset \M$ consist only of first collisions at focusing arcs and let the induced billiard map
$F: M \rightarrow M$ and return time function $\R: M \rightarrow \NN$ be defined as previously mentioned.  Note that $M$ consists of two identical parallelograms with sides bounded by $\varphi=\pm \pi/2$ and two line segment with slope $d\varphi/dr=-1/2$. So by symmetric property, the $\mu_{\cM}$ measure of $M$
\beq
\mu_{\cM}(M)=\frac{4}{4(\pi+\bl)}\int_{0}^{\pi}\int_{0}^{r/2} \cos\varphi\,d\varphi\,dr=\frac{2}{\pi+\bl}.
\eeq
Note that the measure of the set $M$ was  incorrectly calculated in Formula (3) of reference \cite{GB}, with $\mu_{\cM}(M)=\frac{\pi}{2(\pi+\bl)}$. Apparently, $\frac{\pi}{2(\pi+\bl)}$ is the measure of the two rectangles corresponding to all collision points on the two arcs.
Let $A=[0,\bl]\cup[\pi+\bl, \pi+2\bl]$ be the set of position coordinate $r$ of all collisions on the two straight sides of the stadium.

 Note that the stadium billiard has $4$ singular points based on the two arcs, each of which has infinite free flight in the unfolding space of the table (by removing the straight sides). This implies that $\{M_n\}$ has exactly $4$ converging subsequences, denoted as $M_n=\cup_{k=1}^4 M_{n,k}$. We also denote $M_{n,5}:=M_{n,1}$ for convenience of notations. For $x\in M_n$, its image will hit the flat sides of the stadia $n-1$ times before hitting another arc. Also note that the collision angle along the trajectory $T^k x$, $k=1,\cdots, n-1$, is invariant. By symmetric property of the table, one can check that the entire set $\cup_{m\geq n}\cup_{k=1}^{m-1} T^k M_m$  is squeezed between two lines with equation
 $$\sin\varphi= \frac{\bl}{2n}+C n^{-2}\,\,\,\,\,\text{ and }\,\,\,\,\,\,\sin\varphi= -\frac{\bl}{2n}+C n^{-2}$$ for some uniform constant $C>0$.
One can check that
 \begin{align*}
 \mu_{\cM}(x\in \cM\,:\, \R\geq n)&=\frac{1}{2}\sum_{m\geq n}\sum_{k=0}^{m-1} \mu_{\cM}(T^k M_m)+\cO(n^{2})\\
 &=\frac{1}{4|\partial D|}\int_{r\in A}\int_{-\frac{\bl}{2n}+C n^{-2}} ^{\frac{\bl}{2n}+C n^{-2}} \,d\sin\varphi \, dr+\cO(n^{2})=\frac{\bl^2}{4n(\pi+\bl)}+\cO(n^{2}).
 \end{align*}

 Thus
 \begin{align*}
 c_M&=\lim_{n\to\infty}n^2\mu(x\in M\,:\,\R\geq n)\\
 &=\frac{1}{\mu_{\cM}(M)}\lim_{n\to\infty}n\mu_{\cM}(x\in \cM\,:\, \R\geq n)
 =\frac{\bl^2}{4(\pi+\bl)\mu_{\cM}(M) }.
 \end{align*}
This verifies (\textbf{L2}).


 It was shown in \cite{Bu74, CM,CZ07} that $Fx\in M_n$ for any $x\in M_m$,
where $n\in \B_m := [m/3+c_1, 3 m +c_2]$ for some constants
$c_1, c_2>0$. Moreover, for condition (\textbf{L1}) we have
\beq\label{thetabeta3}
\theta:=\frac{3\ln 3}{4}=0.824<1.\eeq
   It was shown in \cite{BSC90,BSC91} that if $x \in M_m$, then $F x \in M_k$ for some
$m/3 + o(1) \le k \le 3m + o(1)$ and that we have the transition probability
$$\mu (F x \in M_k | x \in M_m) = \frac{3m}{8k^2} + \O\left(\frac{1}{m^2}\right)$$
which verifies (\textbf{L3}). It was proved in \cite{CM}~ Chapter 8, that  for $k=1,\cdots, n-1$, for $x$ belongs to any unstable manifold $W^u\subset M_n$, the unstable manifolds $T^kW^u$ at $T^k x$ are expanded under $T$ by a factor $1+\lambda_k$, with
$\lambda_k= \frac{C}{k}+o(n^{-1})$. This verifies (\textbf{L4}).

 Let $f$ be a piecewise H\"{o}lder continuous on $\cM$ with H\"{o}lder exponent $\gamma\in (0,1)$ and we assume $\mu_{\cM}(f)=0$. Here we also assume $f$ is H\"{o}lder continuous  on a small neighborhood of the set $\{(r,0)\,:\, r\in A\}$.
 Next define $I_f$ such that
$$I_f= \frac{1}{2\bl}\int_{r\in A} f(r,0)\,  dr.$$
 Note that for any fixed $k=1,\ldots, 4$, any $x\in M_{n,k}$, its forward images $T^i x=(r_i, \varphi_i)$ will only hit uniformly on the flat sides, for $i=1,\ldots, n-1$, with $$\varphi_i=\varphi_1=\frac{2\bl}{2n}+\cO(n^{-2})$$  and $$|r_i-r_{i-1}|=\frac{2\bl}{n}+\cO(n^{-2}).$$
Since the Riemann integral approximation of $I_f$ can be obtained by sampling $f(r,\varphi_1)$   with respect to the uniform partition of each interval $r\in [0,\bl]$ and $r\in [\bl+\pi, \pi+2\bl]$; using the H\"{o}lder continuity of $f$,
 one can show that for $n$ large, any $x\in M_{n,i}$, we denote $f(T^k x)=(r_k,\varphi_1)$, then
 $$|f(r_k, 0)-f(r_k,\varphi_1)|\leq C\|f\|_{\gamma} n^{-\gamma} $$
 This implies that
\begin{align*}
|1_f &-\frac{1}{n} \sum_{k=0}^{n-1} f\circ T^k(x)| \leq C\bl^{\gamma}\|f\|_{\gamma} n^{-\gamma},
\end{align*}
where $\gamma\in (0,1)$ is the H\"{o}lder exponent of $f$.  Note that $I_f$ does not depend on $k$.
This implies that we can define $J_f=I_f \cR$. Moreover, there exists a function $E=E(f)$, such that for any $x\in M_n$,
$$\tf(x)=\sum_{k=0}^{n-1} f\circ T^k(x)=J_f (x) +E(x),$$ where $E=\cO(\cR^{1-\gamma})$. We define $E_{n,k}=(E\cdot \bI_{\cR<c_n})\circ T^k$, for $n\geq 1$, and $k=0,\ldots, n-1$.
Note that $$E_{n,k}\leq C c_n^{\gamma}=C (n\ln\ln n)^{\frac{\gamma}{2}}.$$

This verifies (\textbf{B1}).

 Now  it follows that (\ref{ClTZf}) holds with
\beq   \frac{1+\theta}{1-\theta}=\frac{4+3\ln 3}{4-3\ln 3}. \eeq

Therefore, the stadium satisfies conditions \textbf{(L1)-(L3)}
and all assumptions of Proposition \ref{propRCLT} are satisfied.

Combining the above facts and Proposition \ref{propRCLT}, we have proved the following results.
\begin{theorem} Let $f$ be a piecewise  H\"{o}lder observable on $\cM$ with $\mu_{\cM}(f)=0$ and $\tf\in \cH_{\gamma}$, such that  $f$ is H\"{o}lder continuous on a small neighborhood of the singular set $\{(r,0)\,:\, r\in A\}$. Assume $\int_{r\in A} f(r,0)\,  dr\neq 0$. Then the  sequence
\beq\frac{\tf+\cdots +\tf\circ F^{n-1}}{ \tilde\sigma_f \sqrt{  n \ln n}}\xrightarrow{d} \N(0,1)\eeq
converges in distribution, as $n\to\infty$, with
$$\tilde\sigma_f^2=\frac{4+3\ln 3}{4-3\ln 3}\cdot\frac{( \int_{r\in A} f(r,0)\,  dr)^2 }{32}.$$
Moreover,
\beq\frac{f+\cdots +f\circ T^{n-1}}{ \sigma_f \sqrt{  n \ln n}}\xrightarrow{d} \N(0,1)\eeq
converges in distribution, as $n\to\infty$, with
\beq\label{sigmafstadia}\sigma_f^2=\frac{4+3\ln 3}{4-3\ln 3}\cdot\frac{( \int_{r\in A} f(r,0)\,  dr)^2 }{16(\pi+\bl)}.\eeq

\end{theorem}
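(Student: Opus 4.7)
The plan is to apply Proposition~\ref{propRCLT} directly, since the preceding discussion in the section already does the bulk of the verification. Specifically, conditions \textbf{(L1)--(L4)} have been checked for the stadium (with $\beta = 3$, so $\theta = \tfrac{3\ln 3}{4} < 1$, together with the geometric transition probability and the $\lambda_k \sim 1/k$ expansion rate proved in \cite{CM}). What remains is (i) to check that the induced function $\tf$ admits the decomposition required in \textbf{(B1)}, and (ii) to evaluate the explicit constants.

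For step (i), I would fix a cell $M_{n,k}$ (one of the four components of $M_n$). Any $x \in M_{n,k}$ has forward images $T^i x = (r_i, \varphi_i)$ lying on the straight sides for $i = 1, \dots, n-1$, all with the same angle $\varphi_i = \tfrac{\bl}{2n} + \cO(n^{-2})$ and with $r_i$ forming an arithmetic progression of step $\tfrac{2\bl}{n}+\cO(n^{-2})$. Thus $\tfrac{1}{n}\sum_{k=0}^{n-1} f(T^k x)$ is a Riemann sum for $\tfrac{1}{2\bl}\int_{A} f(r,\varphi_1)\,dr$, which by H\"older continuity of $f$ on a neighborhood of $\{(r,0): r \in A\}$ converges to $I_f := \tfrac{1}{2\bl}\int_{A} f(r,0)\,dr$ with error $\cO(n^{-\gamma})$. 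Consequently $\tf = J_f + E$ with $J_f = I_f \cR$ and $\|E \cdot \bI_{M_n}\|_\infty = \cO(n^{1-\gamma})$; this both verifies \textbf{(B1)} and shows (since $\gamma>0$) that the $E$-contribution is $o(\sqrt{n\ln n})$ after summing, as required.

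For step (ii), I would assemble the constants. Using the symmetry argument already given in the text, $\mu_{\cM}(M) = \tfrac{2}{\pi+\bl}$ and $\lim_{n\to\infty} n\,\mu_{\cM}(\cR \geq n) = \tfrac{\bl^2}{4(\pi+\bl)}$, so
\begin{equation*}
c_M \;=\; \frac{1}{\mu_{\cM}(M)}\cdot\frac{\bl^2}{4(\pi+\bl)} \;=\; \frac{\bl^2}{8}.
\end{equation*}
With $\theta = \tfrac{3\ln 3}{4}$ we obtain $\tfrac{1+\theta}{1-\theta} = \tfrac{4+3\ln 3}{4-3\ln 3}$. Plugging into the formulas of Proposition~\ref{propRCLT},
\begin{equation*}
\tilde\sigma_f^{\,2} \;=\; I_f^{\,2}\cdot \frac{1+\theta}{1-\theta}\cdot c_M \;=\; \frac{1}{4\bl^2}\Bigl(\int_{A} f(r,0)\,dr\Bigr)^{\!2} \cdot \frac{4+3\ln 3}{4-3\ln 3}\cdot \frac{\bl^2}{8},
\end{equation*}
which simplifies to the stated $\tilde\sigma_f^{\,2}$, and then $\sigma_f^{\,2} = \mu_{\cM}(M)\cdot \tilde\sigma_f^{\,2}$ yields \eqref{sigmafstadia}.

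The main obstacle is the qualitative step, namely making the Riemann-sum argument rigorous uniformly in the component index $k \in \{1,\dots,4\}$: one must control the angle $\varphi_i$ and step-size estimates to sufficient order along the entire bouncing sequence, and justify that the piecewise-H\"older regularity of $f$ near the limiting singular arcs $\{(r,0): r\in A\}$ gives the $n^{-\gamma}$ error uniformly in $x\in M_{n,k}$. Once this is in hand, together with the assumption $\int_A f(r,0)\,dr \neq 0$ (so that $I_f \neq 0$ and Proposition~\ref{propRCLT} applies), the conclusion follows by pure substitution.
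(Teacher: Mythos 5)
Your proposal is correct and follows exactly the route the paper takes: verify \textbf{(L1)--(L4)} from the known geometry of the stadium, establish the decomposition $\tf = I_f\mathcal{R} + E$ via the Riemann-sum argument (with $E=\cO(\mathcal{R}^{1-\gamma})$) to secure \textbf{(B1)}, compute $\mu_{\cM}(M)=\tfrac{2}{\pi+\bl}$ and $c_M=\tfrac{\bl^2}{8}$, and substitute into Proposition~\ref{propRCLT}/Theorem~\ref{A5con}. The arithmetic checks out and reproduces both stated variances.
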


\subsubsection{Skewed stadia}

We now turn our attention to the skewed (or drivebelt) stadia. These tables are constructed by connecting a major arc $\Gamma_1$ with central angle $\theta_0\in (\pi, 3\pi/2)$ and a minor arc $\Gamma_2$ with central angle $\theta_1\in (0, \pi/2)$ by two straight lines of length $\bl$. We assume both arcs have radius $1$. These billiards were introduced by Bunimovich in
\cite{Bu74},
where he also established their hyperbolicity and ergodicity.
More recently, \cite{CZ08} proved that skewed stadia have polynomial decay of correlations. We will use facts from both references in our analysis.

By unfolding the table, we can first consider the phase space $\M$ to be made up only of collisions with
the arcs.
Unlike straight stadia, using this method produces billiards
with finite horizon since the lines used to connect arcs are not parallel.

The billiard map $T:\cM\to\cM$ admits a unique absolutely
continuous
invariant probability measure $\mu_{\cM}$.
A point in the phase space $x\in\cM$ is given by $(r,\varphi)$ where $r\in[0, \bl_0]$ is the position on the
boundary, with $|\partial\cD|=\theta_0+\theta_1+2\bl$, and $\varphi\in [-\pi/2,\pi/2]$ is the angle with respect to the normal to this boundary at $r$. The invariant measure $\mu_{\cM}$ is given by
$$d\mu_{\cM}(r,\varphi)=\frac{\cos\varphi\,dr\,d\varphi}{2|\partial\cD|}.$$

We let $M$ be the set of all
first collisions with a given arc, so that $M_m$ is made up of points which collide with the same arc $m$ times.
In \cite{CZ05a},  the induced space $M$ and the
structure of the associated $m$-cells described in detail.
We let the induced billiard map
$F: M \rightarrow M$ and return time function $\R: M \rightarrow \NN$ be defined as previously mentioned.  Note that $M$ is the union of two parallelograms,
one corresponds to the smaller arc, the other to the larger arc, we call them
{\it{small}} and {\it{big}} parallelograms, respectively. The large  parallelogram has sides bounded by $\varphi=\pm \pi/2$ and two line segment with slope $d\varphi/dr=-1/2$; and the small  parallelogram has sides bounded by $\varphi=\pm \pi/2$ and two line segment with slope $d\varphi/dr=-1/2$.
Thus the measure of $M$ satisfies
\begin{align*}
\mu_{\cM}(M)&=\frac{1}{2 |\partial\cD|}\int_0^{\theta_0}\int_{(\theta_0-\pi-r)/2}^{(\pi-r)/2} \cos\varphi\,d\varphi\,dr+\frac{1}{2 |\partial\cD|}\int_0^{\theta_1}\int_{(\theta_1-\pi-r)/2}^{(\pi-r)/2} \cos\varphi\,d\varphi\,dr\\
&=\frac{1}{2|\partial\cD|}(\sin\theta_0+\sin\theta_1).
\end{align*}

We define the $r$-coordinate such that $r=0$ at one end point of the major arc, and $r=\theta_0$ at the other end point. Let $A=[0,\theta_0-\pi]\cup [\pi, \theta_0]$. Note that there exist two symmetric families of vectors $A_1:=\{(r,0)\,:\, r\in [0,\theta_0-\pi]\}$ and $A_2:=\{(r,0)\,:\, r\in [\pi, \theta_0]\}$, such that $T A_1=A_2$ and $T A_2=A_1$. Moreover, any $x\in A_1\cup A_2$, its trajectory passes through the center of the major arc, and reflects from one-side of $\Gamma_1$ to the other side.

Note that Skewed stadia is similar to the Bunimovich Stadia, in particularly, the verification of \textbf{(L3)-(L4)} are the same, so we will only concentrate on \textbf{(L1)-(L2)}.

 To estimate the constant $c_M$ in (\textbf{L2}), we need to estimate the measure of the set $(\R\geq n)$. Note that for $x\in M_n$, its image will hit the set $A_1\cup A_2$ for $n-1$ times before exiting to other part of the boundary. Also note that the collision angle along the trajectory $T^k x$, $k=1,\ldots, n-1$, is invariant. Thus the set $\cup_{k=1}^{n-1} T^k M_n$  is squeezed between by a line with equation
 $$\sin\varphi= \frac{\theta_0-\pi}{n}+C n^{-2}$$ for some constant $C>0$ and the line $\varphi=0$.  Thus
 $$\mu_{\cM}(x\in \cM\,:\, \R\geq n)=\frac{1}{4 |\partial\cD|}\int_{r\in A}\left( \frac{\theta_0-\pi}{n}+C n^{-2}\right)\, dr=\frac{(\theta_0-\pi)^2}{ 2n|\partial\cD|}+\cO(n^{-2}).$$
 This implies that
 \begin{align*}
 c_M&=\lim_{n\to\infty}n^2\mu(x\in M\,:\,\R\geq n)\\
 &=\frac{1}{\mu_{\cM}(M)}\lim_{n\to\infty}n\mu_{\cM}(x\in \cM\,:\, \R\geq n)
 =\frac{(\theta_0-\pi)^2}{ 2|\partial\cD|\mu_{\cM}(M) }.
 \end{align*}

The skewed stadium enjoys many of the same properties as the straight stadium; for instance, the induced billiard
map $F$ has exponential decay of correlations
and the measure of an $m$ cell is $\mu(M_m) \asymp m^{-3}$.
However, for a point $x\in M_m$ we have that $Fx \in M_n$, where
$$\frac{1}{7}m - \O(1) \le n \le 7m + \O(1).$$
Note that there are $4$ families of subsequences in $M_n'\subset M_n$, $n\geq 1$ that accumulate at the tangential vectors of the two arcs. These subsequences have smaller order of measure $\mu(M_n')=\cO(n^{-4})$. Consequently, we will ignore these points in below estimations as one can check that they do not contribute to abnormal central limit theory.

Although points still spread linearly in $m$, we see that they  can travel further than those in straight
stadia. This wider range affects the normalizing constant present in the transition probabilities between cells,
and we have
\beq\label{eq:sstransprob}
\mu(Fx \in M_n | x \in M_m) = \frac{7m}{48n^2} + \O\left(\frac{1}{m^2}\right).
\eeq
Therefore, skewed stadia are linear spreading, and we have the CLT (\ref{ClTZf})
with
\beq \sigma^2 =  \frac{24+7\ln 7}{24-7\ln 7}.\eeq
This verifies (\textbf{L1}).

For skewed stadia, we define the set $A_1\cup A_2\subset\{x\in \cM\,:\, \R(x)=\infty\}$ consist of the family of two-periodic points whose trajectories passing through the diameter of the major arc.  In addition, we define $A=\{r :\, (r,0)\in A_1\cup A_2\}$ as the range of $r$-coordinates of these periodic points.   For any piecewise  H\"{o}lder observable $f$ on $\cM$ with $\mu_{\cM}(f)=0$ and $\tf\in \cH_{\gamma}$, such that $\int_{r\in A} f(r,0)\,  dr\neq 0$ and $f$ is H\"{o}lder continuous on a small neighborhood of the singular set $\{(r,0)\,:\, r\in A\}$.

We define $I_f$ such that
$$I_f= \frac{1}{2(\theta_0-\pi)}\int_{r\in A} f(r,0)\,  dr.$$
 Note that for any $n$ large, any $x\in M_n\setminus M_n'$, the forward images $T^k x=(r_k,\varphi_k)$ will only hit evenly on the two end sides of $\Gamma_1$, $k=1,\ldots, n-1$, accumulating to points in $A_1\cup A_2$, as $n\to\infty$. In particular, we have $$\varphi_k=\varphi_1=\frac{2|A_1|}{n}+\cO(n^{-2}).$$
Also note that the collisions at the curves $A_1$ and $A_2$ alternate, thus $$r_{k}=r_1+(k-1)\varphi_1=r_1+\frac{2(k-1)|A_1|}{n}+\cO((k-1)n^{-2})$$ for $k=1,\ldots, n-2$. Since $\varphi_k=\varphi_1$, for $k=1,\ldots n-1$, we define
$$\bar f(r_k)=f(r_k,\varphi_1).$$

Since the Riemann integral approximation of $I$ is obtained by uniform partition of each interval $[0,\theta_0-\pi]$ and $[\pi, \theta_0]$, by the Holder continuity of $f$ one can show that for $n$ large, any $x\in M_n$,
\begin{align*}
\tf(x)&=\sum_{k=0}^{n-1} f\circ T^k(x)=\sum_{k=0}^{n-1} \bar f(r_k)\\
&=\sum_{k=0}^{n-1} \bar f(r_1+\frac{2(k-1)|A_1|}{n}+\cO((k-1)n^{-2}))\\
&=\frac{n}{|A_1|+|A_2|}\int_{A_1\cup A_2} f(r,\varphi) dr +E(x),\end{align*}
 where $E=\cO(R^{1-\gamma})$. Here we used the fact  that the function $f$ is H\"{o}lder continuous with exponent
 $\gamma$ in the variables $(r,\varphi)$ on a small neighborhood of $A_1\cup A_2$. This verifies \textbf{(B1)}.
Combining the above facts and Proposition \ref{propRCLT}, we have proved the following theorem.
\begin{theorem} Let $f\in\cH_{\gamma}$  with $\mu_{\cM}(f)=0$ and $f\in \cH_{\gamma}$, such that $f$ is H\"{o}lder continuous on a small neighborhood of the singular set $\{(r,0)\,:\, r\in A\}$. Assume $I_f\neq 0$, then the  sequence
\beq\frac{f+\cdots +f\circ T^{n-1}-n\mu_{\cM}(f)}{ \sigma_f \sqrt{ \cdot n \ln n}}\xrightarrow{d} \N(0,1)\eeq
converges in distribution, as $n\to\infty$, with $$\sigma_f^2=\frac{24+7\ln 7}{24-7\ln 7}\cdot  \frac{ (\int_{r\in A} f(r,0)\,  dr)^2 }{8|\partial\cD|}.$$
\end{theorem}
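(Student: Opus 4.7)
The plan is to verify the hypotheses of Proposition \ref{propRCLT} for the skewed stadium and then unwind the explicit constants to recover the stated formula for $\sigma_f^2$. Most of the dynamical input is already assembled in the preceding discussion: it is noted there that skewed stadia share with ordinary stadia the transition structure (\ref{eq:sstransprob}), the local linear expansion along images $T^k x$ for $x\in M_n$, and cell measures $\mu(M_m)\asymp m^{-3}$; thus conditions \textbf{(L3)} and \textbf{(L4)} carry over verbatim, while \textbf{(L1)} is given by (\ref{eq:sstransprob}) with spreading factor $\beta=7$, yielding $\theta=\frac{2\ln 7}{7-1/7}=\frac{7\ln 7}{24}\in(0,1)$. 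Condition \textbf{(L2)} is provided by the explicit geometric computation already carried out just above the theorem, which gives $c_M=\frac{(\theta_0-\pi)^2}{2|\partial\cD|\,\mu_{\cM}(M)}$.

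The first substantive step will be to verify \textbf{(B1)}, i.e.\ to write $\tf=J_f+E$ with $J_f=I_f\cdot\R$ and $E=\O(\R^{1-\gamma})$. For $x\in M_n$ away from the small exceptional subsequences $M_n'$ (which have $\mu(M_n')=\O(n^{-4})$ and so contribute negligibly), the proof uses the key geometric observation already recorded: the images $T^k x=(r_k,\varphi_k)$, $k=1,\dots,n-1$, alternate between the two singular arcs $A_1$ and $A_2$, have common angle $\varphi_k=\varphi_1=\tfrac{2|A_1|}{n}+\O(n^{-2})$, and the positions $r_k$ form an $\O(n^{-1})$-equispaced sampling of $A_1\cup A_2$. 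Viewing $\tf(x)=\sum_{k=0}^{n-1}\bar f(r_k)$ as a Riemann sum for $\tfrac{1}{|A|}\int_A f(r,0)\,dr$ and using H\"older continuity in both variables near $\{(r,0):r\in A\}$ gives the clean decomposition
\[
\tf(x)=I_f\cdot\R(x)+E(x),\qquad I_f=\frac{1}{2(\theta_0-\pi)}\int_{r\in A}f(r,0)\,dr,
\]
with error term controlled as $E=\O(\R^{1-\gamma})$. This verifies \textbf{(B1)}.

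Once \textbf{(L1)--(L4)} and \textbf{(B1)} are in place, Proposition \ref{propRCLT} applies directly and yields convergence of the normalized sums to $\mathcal{N}(0,1)$ with variance formula
\[
\sigma_f^2=\frac{1+\theta}{1-\theta}\cdot I_f^2\cdot\mu_{\cM}(M)\cdot c_M.
\]
The final step is the arithmetic substitution: $\frac{1+\theta}{1-\theta}=\frac{24+7\ln 7}{24-7\ln 7}$, and
\[
I_f^2\cdot\mu_{\cM}(M)\cdot c_M=\frac{\bigl(\int_A f(r,0)\,dr\bigr)^2}{4(\theta_0-\pi)^2}\cdot\mu_{\cM}(M)\cdot\frac{(\theta_0-\pi)^2}{2|\partial\cD|\,\mu_{\cM}(M)}=\frac{\bigl(\int_A f(r,0)\,dr\bigr)^2}{8|\partial\cD|},
\]
so that the factors of $(\theta_0-\pi)$ and $\mu_{\cM}(M)$ cancel and one recovers exactly the stated expression for $\sigma_f^2$.

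The main obstacle, and the only point requiring genuine care, is the Riemann-sum argument in the verification of \textbf{(B1)}: one must handle the bookkeeping of the alternating collisions on $A_1$ and $A_2$, show uniformity of the error in $n$ using the H\"older exponent $\gamma$ to bound both the discretization error in the $r$-variable and the replacement of $\varphi_1$ by $0$, and verify that the exceptional subsequences $M_n'$ (tangential accumulations) can be absorbed into the error term $E$ without affecting the order $\O(\R^{1-\gamma})$. Everything else is either quoted from Proposition \ref{propRCLT} or a direct geometric/arithmetic computation.
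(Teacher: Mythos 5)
Your proposal is correct and follows essentially the same route as the paper: verify \textbf{(L1)--(L4)} for the skewed stadium using the spreading range $n\in[m/7,7m]$ and transition probability $\mu(Fx\in M_n\mid x\in M_m)=\tfrac{7m}{48n^2}+\O(m^{-2})$ (giving $\beta=7$, $\theta=\tfrac{7\ln 7}{24}$), verify \textbf{(B1)} via the alternating-collision Riemann-sum decomposition $\tf=I_f\R+E$ with $E=\O(\R^{1-\gamma})$, compute $c_M=\tfrac{(\theta_0-\pi)^2}{2|\partial\cD|\,\mu_{\cM}(M)}$, and then substitute into Proposition \ref{propRCLT}; the arithmetic cancellation of $(\theta_0-\pi)^2$ and $\mu_{\cM}(M)$ matches the paper exactly. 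One small labeling slip: the transition-probability formula (\ref{eq:sstransprob}) verifies \textbf{(L3)}, while \textbf{(L1)} is verified by the spreading range $\tfrac{1}{7}m-\O(1)\le n\le 7m+\O(1)$; the substance of your argument is unaffected.
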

\subsection{Billiards with algebraic spreading property}

 Let $f$ be a Holder continuous function on $\cM$ with exponent $\gamma\in (0,1)$. Assume (\textbf{B1}) and (\textbf{B2}) hold. More precisely, there exist $N>0$ and a set of finite real numbers $\cA_f=\{a_1,\cdots, a_{N}\}$,  and for each $n\geq 1$, the level set $M_n$ is decomposed into $N$ connected sets $M_n=\cup_{k=1}^{N} M_{n,k}$. Let $U_{n,k}=\cup_{m=n}^{\infty} M_{m,k}$. We now define a function
    \beq\label{Jx1}
J_f=\sum_{k=1}^{N} a_k \R\cdot \bI_{U_{1,k}}.\eeq Assume there exists an observable $E=E(f)$, such that the induced function $\tf$ satisfies:
$$\tf(x) =J_f(x) +E(x)$$ with the property that
$\var(\sum_{k=0}^{n-1} E_{n,k})<C n$ for some uniform constant $C>0$, where $E_{n,k}=(E\cdot \bI_{\mathcal{R}<c_n})\circ F^k$, and  $c_n=\sqrt{n\ln\ln n}$.
Moreover, (\textbf{B2}) implies that  there exists $c_M>0$  such that   $$\lim_{m\to\infty} m^2\mu(|J_f|\geq m)=c_M.$$

In this subsection, we first list some sufficient conditions (\textbf{G1})-(\textbf{G2}) for dynamical systems to guarantee the \textit{algebraic spreading property}, which is defined for $k\in [1,N]$:

\begin{itemize}
\item[{\textbf{(G1)}}] Assume that for $m$ large, any $x\in M_{m,k}$, $Fx$ must belong to $ M_{n,k}$, with index
$n\in \B_{m}\colon= [c_1 \sqrt{m}, c_2 m^2 ]$, for some constants
$c_1>0, c_2>0$.
\item[(\textbf{G2})] Assume the transition probability $p_{n,m,k}$ from $M_{n,k}$ to $M_{m,k}$ satisfies
\beq\label{Markov1} \sum_{n\in \cB_m} n p_{n,m,k}=\cO(\sqrt{m}).\eeq
\end{itemize}

Note that conditions \textbf{(G1)-(G2)} are simply designed for the semi-dispersing billiards as well as billiards with cusps, although we could make them more general.

We denote $X_n=J_f\circ F^n-\mu(J_f)$ and $X_{n,k}=(X_n\cdot |X_n|<c_n)\circ F^k$ for $k=0,\ldots, n$, and $H(c_n)=\var(X_{n,k})$, where $c_n=\sqrt{n\ln n}$. It follows from similar argument that  if the induced system  $(F,M)$ satisfies \textbf{(G1)-(G2)}, then the process $\{X_n\}$ satisfies (\textbf{A1})-(\textbf{A2}). Next we verify condition (\textbf{A2}).

 Let us first assume (\textbf{G2}) holds. We have, for any $k=1,\cdots, N$, any $m$ large,
\begin{align}\label{eq:linspr5}
\EE((a_k\R\cdot \bI_{a_k\R<c_n})\circ F(x) | \R(x) = m) &= \sum\limits_{n\in\B_m} a_k n p_{n,m,k}\1_{M_m}(x)\cdot\1_{a_k\R<c_n}(Fx)=e_m\1_{M_m},
\end{align}
where  $e_m=\cO(\sqrt{m})$. This implies that we can take $\theta=0$ and  $\cE_{n,0}=\sum_{m=1}^{c_n^2} e_m \bI_{M_m}$.

Thus, we have
$$\EE(\cE_{n,0}\cdot X_{n,0})=\sum_{k=1}^{c_n}k\cdot e_k \mu_{\cM}(M_k)<C\sum_{k=1}^{c_n}k^{3/2} k^{-3}<C_1,$$
where $C,C_1>0$ are uniform constants.
This verifies (\textbf{A2}).

For any $n\geq 1$, $\cE_{n,0}$ is bounded by $c_n^2$ and is piecewise constant on $M$. It follows from results in \cite{CZ09} that $\cE_{n,0}$ enjoys exponential decay of correlations under the induced map $F$. More precisely, there exists $\theta_1\in (0,1)$ and constant $C>0$ such that
\beq\label{exdecaycE2}\cov(\cE_{n,0}\circ F^i \cdot \cE_{n,0})\leq C c_n^4 \theta_1^i.\eeq
This implies that there exists $C_2>0$ such that
$$\sum_{i=1}^n(n-i)\cov(\cE_{n,0}\circ F^i \cdot \cE_{n,0})\leq C_2 n.$$
Combining the above facts, we have proved the following lemma.
\begin{lemma}\label{Ed2} For algebraic spreading processes satisfy \textbf{(B1)-(B2)} and \textbf{(G1)-(G2)}, condition (\textbf{B3})  holds with $\theta=0$.
\end{lemma}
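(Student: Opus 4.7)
The statement is essentially a packaging of the computation performed in the preceding paragraph, so my plan is to organize those pieces cleanly and confirm that they genuinely deliver (\textbf{B3}).

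First, I would unpack $J_f$ using (\textbf{B1}) as $J_f = \sum_{k=1}^{N} a_k \, \R \, \bI_{U_{1,k}}$, and observe that on each level component $M_{m,k}$, the hypothesis (\textbf{G1}) says that $F(M_{m,k}) \subset \bigcup_{n \in \B_m} M_{n,k}$ for $m$ large. In particular, $F$ preserves the index $k$ on large level sets, so on $M_{m,k}$ we have $J_f \circ F = a_k \, \R \circ F$. This trivializes the evaluation of $(J_f \bI_{|J_f|<c_n}) \circ F$ on each $M_{m,k}$: it equals $a_k (\R \circ F) \bI_{|a_k \R \circ F| < c_n}$.

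Next, since $M_{m,k} \in \cF_0$ and the transition probabilities $p_{n,m,k}$ from $M_{m,k}$ to $M_{n,k}$ are well-defined, I would compute on $M_{m,k}$
\[
\EE\bigl((J_f \bI_{|J_f|<c_n}) \circ F \,\big|\, \cF_0\bigr)(x) \;=\; \sum_{n \in \B_m} a_k \, n \, \bI_{|a_k n| < c_n} \, p_{n,m,k}.
\]
By (\textbf{G2}), the sum $\sum_{n \in \B_m} n \, p_{n,m,k}$ is $\cO(\sqrt{m})$, so the above expression is bounded in modulus by $C|a_k|\sqrt{m}$. This yields the decomposition in (\textbf{B3}) with $\theta = 0$ and
\[
\cE_{n,0}(x) = \sum_{k=1}^N \sum_{m\ge 1} \bI_{M_{m,k}}(x) \sum_{n \in \B_m} a_k \, n \, \bI_{|a_k n| < c_n} \, p_{n,m,k},
\]
which is manifestly $\cF_0$-measurable (being a conditional expectation with respect to $\cF_0$, and built from the $\cF_0$-measurable partition $\{M_{m,k}\}$).

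The main bookkeeping hurdle is checking that setting $\theta = 0$ really is legitimate in (\textbf{B3}), i.e.\ that one does not need to subtract a $\theta J_f \bI_{|J_f|<c_n}$ term to make $\cE_{n,0}$ small in a sharper sense — but (\textbf{B3}) as stated only asks for $\cF_0$-measurability of $\cE_{n,0}$, with no size constraint, so this is automatic. The other minor point worth confirming is that the partition into components $\{M_{m,k}\}_{k=1}^N$ is indeed resolved by $\cF_0$; this follows from the description of $\cF_0$ as the $\sigma$-algebra generated by the unstable foliation together with $\R$, since in the intended applications (semi-dispersing tables and cusps) the $N$ components of each $M_m$ are separated geometrically and can be distinguished by $\cR\circ F^{-1}$. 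With those items in place, the lemma follows immediately from the computations already recorded above its statement.
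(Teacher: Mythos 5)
Your proposal is correct and takes essentially the same route as the paper: decompose $J_f$ via (\textbf{B1}), use (\textbf{G1}) to conclude $F$ keeps the channel index so that $J_f\circ F=a_k\,\R\circ F$ on $M_{m,k}$, compute the conditional expectation against $p_{n,m,k}$, and invoke (\textbf{G2}) for the $\cO(\sqrt m)$ bound before setting $\theta=0$. You are also right that (\textbf{B3}) as literally stated only requires $\cF_0$-measurability of $\cE_{n,0}$, and you correctly flag the measurability of the partition $\{M_{m,k}\}$; the reason the paper nevertheless records the $\cO(\sqrt m)$ bound is that it feeds the two accompanying checks in the text — finiteness of $\EE(\cE_{n,0}X_{n,0})$ and the covariance-sum estimate via \eqref{exdecaycE2} — which you derive the ingredient for but do not carry out.
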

Thus, it is sufficient to verify \textbf{(B1)-(B2)} and \textbf{(G1)-(G2)} to get the CLT according to Theorem \ref{A5con}.

\begin{proposition}\label{propaCLT} For any  continuous  observable $f$ on $\cM$, assume  $\tf\in \cH_{\gamma}$ for some $\gamma\in (0,1)$ satisfies \textbf{(B1)}-\textbf{(B2)}. Moreover we assume the induced map $(F,M)$ satisfies \textbf{(G1)-(G2)}. Then the  CLT holds:
\beq\label{ClTZf2}\frac{f+\cdots +f\circ T^{n-1}-n\mu_{\cM}(f)}{ \sqrt{  c_M\mu_{\cM}(M) n \ln n}}\xrightarrow{d} \N(0,1)\eeq
converges in distribution, as $n\to\infty$.\end{proposition}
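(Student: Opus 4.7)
The plan is to apply Theorem~\ref{A5con} to the observable $f$, so the task reduces to verifying assumptions \textbf{(B1)}--\textbf{(B4)} under the stated hypotheses. Conditions \textbf{(B1)} and \textbf{(B2)} are assumed. Condition \textbf{(B3)} is already established in Lemma~\ref{Ed2} with $\theta=0$: computing the one-step conditional expectation $\EE((a_k\R\cdot\bI_{a_k\R<c_n})\circ F \mid \R=m)$ using the transition probabilities $p_{n,m,k}$ and invoking \textbf{(G2)} shows that this expectation is of order $\cO(\sqrt{m})$, which is negligibly smaller than $m=J_f/a_k$. Hence the expansion in \textbf{(B3)} reduces to a pure remainder $\cE_{n,0}$ with no proportional $J_f$-term, i.e. $\theta=0$.

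The crux is therefore to verify \textbf{(B4)}, the exponential decay of correlations for the truncated induced observables $\tg_{n,k} := (\tf\cdot\bI_{|\tf|<c_n})\circ F^k$. The strategy mirrors the proof of Proposition~\ref{boundedcovtf}, but is simpler here because \textbf{(G1)} only requires that $F M_{m,k}$ is supported in the range $n\in[c_1\sqrt{m},c_2 m^2]$ (rather than a linear band), and the cells $M_{m,k}$ have measure of order $m^{-3}$. I would construct a standard family $\cG_{i,j}$ supported on the union of level cells $\cM_{i,j}=\cup_{m=i}^j M_m$, estimate $\cZ(F\cG_{i,j})$ via the widths of $FM_m$, and truncate $\tg_{n,k}$ at two intermediate indices $p<q$, exactly as in the proof of Proposition~\ref{boundedcovtf}. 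The middle piece $\tf_{i,q}$ is bounded with H\"older norm polynomial in $q$, so the exponential bound \eqref{correT} from \cite{CZ09} gives a $\theta^k q^{2+2\gamma}$ contribution; the tail pieces are controlled by the Growth Lemma (Lemma~\ref{growthlemma}) applied to iterates $F^k\cG_{i,j}$, which yields cross-cell measure bounds of order $(\vartheta^{k-1}\cZ+C)\,m^{-2}i^{-1}$. Optimizing the truncation levels $q=\vartheta^{-k/d}$, $p=\sqrt{q}$ for a suitable $d$, and combining yields the two regimes of \textbf{(B4)}: a $\theta^k\ln n$ bound for $k\leq 2\ln n$ and $\theta^{k/2}$ for $k\geq 2\ln n$.

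With \textbf{(B1)}--\textbf{(B4)} verified, Theorem~\ref{A5con} is directly applicable. Since $\theta=0$, the scaling constant $\tfrac{1+\theta}{1-\theta}=1$, so \eqref{ClTZff} gives
\[
\frac{f+\cdots+f\circ T^{n-1}-n\mu_{\cM}(f)}{\sqrt{c_{M,f}\cdot\mu_{\cM}(M)\cdot n\ln n}} \xrightarrow{d} \N(0,1),
\]
which is \eqref{ClTZf2} under the identification $c_{M,f}=c_M$ (both being $\lim_n n^2\mu(|J_f|\geq n)$ in this setting, which coincides with $\lim_n n^2\mu(\R\geq n)$ up to the multiplicative constants absorbed into the $a_k$).

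The main obstacle I anticipate is step two: carefully tracking the polynomial dependence on the truncation level $c_n$ in the H\"older norms of the truncated induced functions, so that the exponential factor $\vartheta^k$ in the base decay-of-correlations estimate \eqref{correT} dominates and yields the clean $\theta^k\ln n$ bound demanded by \textbf{(B4)}. The algebraic-spreading weaker bound \textbf{(G1)} (allowing transitions up to index $m^2$) also needs to be reconciled with the standard-family measure estimates, but this is exactly what the Growth Lemma with $\cZ(F\cG_{i,j})\leq C_1 i$ provides, since the dominant contribution to $\cZ$ comes from the smallest index in the range, not the largest.
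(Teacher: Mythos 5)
Your proposal takes essentially the same route as the paper: reduce to Theorem~\ref{A5con} by checking \textbf{(B1)}--\textbf{(B4)}, with \textbf{(B3)} supplied by Lemma~\ref{Ed2} (giving $\theta=0$), and then plug $\tfrac{1+\theta}{1-\theta}=1$ and $c_{M,f}=c_M$ into \eqref{ClTZff}. That last identification is what the paper uses implicitly: in Section~6.3, $c_M$ is redefined as $\lim_n n^2\mu(|J_f|\geq n)$, so $c_M=c_{M,f}$ by \textbf{(B2)}.

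Where you add something the paper does not: the paper's ``proof'' of Proposition~\ref{propaCLT} is the single sentence immediately preceding it (``Thus, it is sufficient to verify \textbf{(B1)--(B2)} and \textbf{(G1)--(G2)}\ldots''), and \textbf{(B4)} is never derived from the hypotheses \textbf{(G1)--(G2)} in general; it is only verified later for the concrete semi-dispersing example (Proposition~\ref{boundedcovtf3}). You correctly flag this as the crux and propose to fill the gap by imitating Propositions~\ref{boundedcovtf} and~\ref{boundedcovtf3}. That is the right move, but be aware it imports extra structure not stated in \textbf{(G1)--(G2)} — namely $\mu(M_m)\asymp m^{-3}$, a measurable unstable foliation, the widths/lengths of $FM_m$, and the availability of the Growth Lemma and \eqref{correT}. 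The paper uses all of this silently too, so your sketch matches its intent, but strictly speaking Proposition~\ref{propaCLT} as stated does not supply enough hypotheses to verify \textbf{(B4)}; either \textbf{(B4)} should be an explicit hypothesis, or these geometric properties should be.

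Two minor sharpness slips in your \textbf{(B4)} sketch: for the square-spreading cell geometry the paper obtains $\cZ(F\cG_{i,j})\leq C_1 i^{1/2}$ (Proposition~\ref{boundedcovtf3}), not $C_1 i$ (that is the linear-spreading estimate); and the cross-cell measure bound is $\bigl(\vartheta^{k-1}\cZ+C''\bigr)m^{-2}i^{-2}$, not $m^{-2}i^{-1}$. Neither slip breaks the optimization over $p,q$, since you are overestimating $\cZ$ and the final truncation absorbs polynomial factors, but they should be corrected in a careful write-up.
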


Next, we study two systems that have the property of algebraic spreading.

\subsubsection{Dispersing billiards with cusps}
Here we consider billiards with cusps, where the table is made of three curves, in which two identical curves touches tangentially at a point $p$ that forms a cusp. The table is constructed such that the tangent line of the two symmetric boundary at $p$ is perpendicular to the third curve.

The decay rates of correlations for  billiards with cusps  was first studied  in \cite{CM07}. It is known that the billiard map on these
tables is hyperbolic and ergodic; however, the hyperbolicity is nonuniform. As a result, correlations
decay polynomially. In fact, it was proved in \cite{CZ08} that
the rate of mixing is $\le \text{const}\cdot n^{-1}$. Recently, it was shown
by B\'alint, Chernov, and Dolgopyat in \cite{BCD} that a central limit theorem does hold in these systems for
H\" older continuous functions;
 we  again restrict our attention to the return time map.

On these tables the dynamics are nonuniformly hyperbolic when trajectories become trapped in a cusp for a large number of iterations.
We wish to remove these collisions from our consideration, so for simplicity, we have assumed our table has only one cusp at point $p$ with arclength parameter
$r=0$. We construct the subset $M \subset \M$ by
$$M = \M \, \backslash \,  \, \{(r,\varphi)\,:\, r\in\partial \cD\setminus (\Gamma_1\cup\Gamma_2)\}.$$ Note that $M$ consists collisions on the complement of $\Gamma_1\cup\Gamma_2$.

 The cell $M_n$
(where $n$ is large) is made by trajectories that go deep into
the cusp and after exactly $m − 1$ bounces off its walls exit it.
We use the results and notation of \cite{BCD}. Let a cusp be made by two
boundary components $\Gamma_1, \Gamma_2$. Choose the coordinate system such that the equations of $\Gamma_1$ and $\Gamma_2$ are, respectively,
$y = g_1(x)$ and $y = −g_2(x)$, where $g_i$ are convex $C^3$ functions, such that $g_i(x) > 0$
for $x > 0$, and $g_i(0) = g′_i(0) = 0$ for $i = 1, 2$. The Taylor
expansion for the functions $g_i$ can be represented as $$g_i(x)=\frac{1}{2} a_i x^2+\cO(x^3)$$
where $a_i=fg'_i(0)>0$ is the curvature of $\Gamma_i$ at the point $p$. We denote $\bar a=(a_1+a_2)/2$.
Since the corner point $p$ is the end point of both $\Gamma_1$ and $\Gamma_2$, thus the coordinate $r$ takes two values at the cusp $p$, which we denote as $r', r''$, respectively. We denote $A_1=\{(r',\varphi), \varphi\in [-\pi/2, \pi/2]\}$ and $A_2=\{(r'',\varphi), \varphi\in [-\pi/2, \pi/2]\}$ as the two vertical boundaries in $\cM$ at the corner point $p$.

We let the induced billiard map
$F: M \rightarrow M$ and return time function $\R: M \rightarrow \NN$ be defined as previously mentioned.  Note that $M$ consists of all collisions on $\partial\cD\setminus(\Gamma_1\cup\Gamma_2)$, thus the measure of $M$ satisfies
\beq
\mu_{\cM}(M)=\frac{1}{2 |\partial\cD|}\int_0^{|\partial\cD|-|\Gamma_1|-|\Gamma_2|}\int_{-\pi/2}^{\pi/2} \cos\varphi\,d\varphi\,dr=\frac{|\partial\cD|-|\Gamma_1|-|\Gamma_2|}{|\partial\cD|}.
\eeq

To prove the CLT, we need to check conditions \textbf{(G1)-(G2)} and \textbf{(B1)-(B2)}.

 To estimate the constant $c_M$ in (\textbf{B2}), we need to estimate the measure of the set $(\R\geq n)$.  Note that the collision angle along the trajectory $T^k x$, $k=1,\cdots, n-1$, is invariant. Thus the set $\cup_{k=1}^{n-1} T^k M_n$  is squeezed between by a line with equation
 $$r^2= \frac{C}{n^2 \cos\varphi}+o(n^{-2})$$ for some constant $C>0$ and the line $\varphi=0$.  Thus
 \begin{align*}
 \mu_{\cM}(x\in \cM\,:\, \R\geq n)&=\frac{1}{ 4|\partial\cD|}\int_{-\pi/2}^{\pi/2}\left(\frac{1}{n \sqrt{\cos\varphi}}+o(n^{-2})\right)\,  \cos\varphi\,d\varphi\\
 &=\frac{1}{4|\partial\cD| n}\int_{-\pi/2}^{\pi/2}\sqrt{ \cos\varphi}\,d\varphi+o(n^{-1})\\
 &=\frac{\bar a}{2|\partial\cD| n}+o(n^{-1}),\end{align*}
 where we used the fact that
 $$\int_{-\pi/2}^{\pi/2}\sqrt{ \cos\varphi}\,d\varphi=2\bar a $$ as proved in \cite{BCD}.

Now we let $n\to\infty$ to get
 \beq c_M=\lim_{n\to\infty}n\mu_{\cM}(x\in \cM\,:\, \R\geq n) =\frac{\bar a}{2|\partial\cD| }. \eeq

Let $f$ be a piecewise  H\"{o}lder observable on $\cM$ with $\mu_{\cM}(f)=0$ and $\tf\in \cH_{\gamma}$ such that  $f$ is H\"{o}lder continuous on a small neighborhood of the singular set $A_1\cup A_2$.
Using \cite{BCD}, we can get  the estimation on the sum for $x\in M_n$,
$$\S_n f(x) =
\sum_{k=1}^n f(r_k,\varphi_k)=I_f\R(x)+E(x),$$ where $(r_k,\varphi_k)$ are
the standard coordinates of the reflection points, and $$I_f=\frac{\int_{-\pi/2}^{\pi/2} ( f(r',\varphi)+f(r'',\varphi))\sqrt{\cos\varphi}\, d\varphi}{4\bar a}$$
as well as $$E(x)\leq C\R(x)^{1-\gamma/2}$$ for some uniform constant $C>0$.

We define $E_{n,k}=(E\cdot \bI_{R<c_n})\circ T^k$, for $n\geq 1$, and $k=0,\cdots, n-1$.
Note that $$E_{n,k}\leq C c_n^{1-\gamma/2}=C (n\ln\ln n)^{1-\gamma/2}.$$

For any $n\geq 1$, $E_{n,0}$ is piecewise constant on $M$. It follows from results in \cite{CZ09} that $E_{n,0}$ enjoys exponential decay of correlations under the induced map $F$. More precisely, there exists $\theta_1\in (0,1)$ and constant $C>0$ such that for $i=0,\cdots, n-1$,
\beq\label{exdecaycE1}\cov(E_{n,0}\circ F^i \cdot E_{n,0})\leq C (n\ln\ln n)^{1-\gamma/2} \theta_1^i.\eeq
This implies that there exists $C_1>0$ such that
\[
\var(\sum_{k=0}^{n-1} E_{n,k})=n\var(E_{n,0})+\sum_{i=1}^n(n-i)\cov(|E_{n,0}\circ F^i \cdot E_{n,0}|)\leq C_1 (n\ln\ln n)^{1-\gamma/2}<cn
\] which verifies (\textbf{B3}).
Combining the above facts and Proposition \ref{propaCLT}, we have proved the following results.
\begin{theorem} Let $f\in \cH_{\gamma}$, with $\mu_{\cM}(f)=0$ and  $f$ is H\"{o}lder continuous on a small neighborhood of the singular set $A_1\cup A_2$. We assume $\int_{-\pi/2}^{\pi/2} ( f(r',\varphi)+f(r'',\varphi))\sqrt{\cos\varphi}\, d\varphi\neq 0$, then the  sequence
\beq\frac{f+\cdots +f\circ T^{n-1}-n\mu_{\cM}(f)}{ \sqrt{\sigma_f^2\cdot n \ln n}}\xrightarrow{d} \N(0,1)\eeq
converges in distribution, as $n\to\infty$, where
$$ \sigma_f^2=\frac{\left(\int_{-\pi/2}^{\pi/2} ( f(r',\varphi)+f(r'',\varphi))\sqrt{\cos\varphi}\, d\varphi\right)^2}{8\bar a|\partial\cD| }.$$
\end{theorem}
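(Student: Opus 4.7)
The plan is to apply Proposition~\ref{propaCLT} to the induced cusp map with all structural ingredients made explicit, and then read off $\sigma_f^2 = I_f^2\,c_M\,\mu_{\cM}(M)$ from that proposition (the factor $(1+\theta)/(1-\theta)$ trivializes because $\theta=0$ in the algebraic spreading regime, by Lemma~\ref{Ed2}). Three verifications are required: the structural conditions \textbf{(G1)-(G2)} on $(F,M)$, the tail condition \textbf{(B2)} with an explicit $c_M$, and the decomposition \textbf{(B1)} with an explicit $I_f$ and controllable error $E$.

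For \textbf{(G1)-(G2)}, I would invoke the detailed Chernov-Markarian asymptotics for a corner series trapped in the cusp: for $x\in M_m$ with $m$ large, writing $\gamma_n=\pi/2-|\varphi_n|$ for the grazing angle at the $n$-th bounce, one has $x_n^3 \asymp n^{-1}m^{-2}$ and $\gamma_n \asymp (n/m)^{2/3}$ for $n\le m/2$, with a symmetric tail for $n>m/2$. These uniform estimates on the geometry of a deep corner series yield the spread $n\in [c_1\sqrt m, c_2 m^2]$ required by \textbf{(G1)} and, combined with $\mu(M_m)\asymp m^{-3}$, the moment bound $\sum_{n\in\B_m} n\,p_{m,n} = O(\sqrt m)$ of \textbf{(G2)}. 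For \textbf{(B2)}, the key geometric observation is that along a corner series the collision angle $\varphi$ is essentially invariant, so $\bigcup_{k=1}^{n-1} T^k M_n$ is a thin strip near the two cusp vectors $A_1,A_2$ whose $r$-extent at angle $\varphi$ is of order $1/(n\sqrt{\cos\varphi})$; integrating against $d\mu_{\cM}=c_\mu\cos\varphi\,dr\,d\varphi$ and using the identity $\int_{-\pi/2}^{\pi/2}\sqrt{\cos\varphi}\,d\varphi=2\bar a$ from \cite{BCD} gives $n\,\mu_{\cM}(\cR\ge n)\to \bar a/(2|\partial\cD|)$, hence $c_M = \bar a/\bigl(2|\partial\cD|\,\mu_{\cM}(M)\bigr)$.

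For \textbf{(B1)}, I would use the main computation of \cite{BCD}: along a corner series the successive reflections $T^k x=(r_k,\varphi_k)$ accumulate on $A_1\cup A_2$ with an asymptotically non-uniform distribution weighted by $\sqrt{\cos\varphi}$, and the H\"older continuity of $f$ on a neighbourhood of $A_1\cup A_2$ lets one approximate the Birkhoff sum by a Riemann sum against this weight, yielding, for $x\in M_n$,
\[
  \tf(x) = I_f\,\cR(x) + E(x),\qquad |E(x)|\le C\,\cR(x)^{1-\gamma/2},
\]
with $I_f = (4\bar a)^{-1}\int_{-\pi/2}^{\pi/2}\bigl(f(r',\varphi)+f(r'',\varphi)\bigr)\sqrt{\cos\varphi}\,d\varphi$. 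Setting $J_f=I_f\,\cR$ (so $N=1$) verifies \textbf{(B1)}; the truncated remainder $E_{n,k}=(E\cdot\bI_{\cR<c_n})\circ F^k$ is constant on each level set $M_m$ with $m\le c_n^2$, so is a bounded piecewise-constant observable on the return-time partition and therefore enjoys exponential decay of correlations under $F$ by \cite{CZ09}, giving $\var\bigl(\sum_k E_{n,k}\bigr) = O(n)$ as required by the remark following \textbf{(B4)}.

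The main obstacle, and the technical heart of the argument, is the Riemann-sum approximation for $\tf$ along a deep corner series. This step uses the H\"older continuity of $f$ near $A_1\cup A_2$ in an essential way: extracting the $\sqrt{\cos\varphi}$ weight and obtaining the sharp $\cR^{1-\gamma/2}$ bound on $E$ requires both the precise spacing asymptotics of $(r_k,\varphi_k)$ provided by \cite{BCD} and a careful summation-by-parts, and it is the content most specific to cusps (with no analogue in the stadium, where the corresponding sum is essentially periodic). Once it is in hand, Proposition~\ref{propaCLT} gives $\sigma_f^2 = I_f^2\,c_M\,\mu_{\cM}(M)$, and substituting the explicit $I_f$ together with $c_M\,\mu_{\cM}(M) = \bar a/(2|\partial\cD|)$ simplifies to the stated formula.
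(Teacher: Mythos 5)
Your proposal is correct and follows essentially the same route as the paper: verify \textbf{(B1)} (via the BCD decomposition $\tf = I_f\R + E$ with $|E|\le C\R^{1-\gamma/2}$), verify \textbf{(B2)} (computing the tail constant from the geometry of the corner series and the identity $\int_{-\pi/2}^{\pi/2}\sqrt{\cos\varphi}\,d\varphi = 2\bar a$ of \cite{BCD}), invoke Lemma~\ref{Ed2} to get $\theta=0$, and conclude via Proposition~\ref{propaCLT}. If anything, your treatment is slightly more explicit than the paper's: you spell out the corner-series asymptotics ($x_n^3\asymp n^{-1}m^{-2}$, $\gamma_n\asymp(n/m)^{2/3}$) underlying \textbf{(G1)-(G2)}, which the paper, after announcing that these must be checked, does not actually verify in its cusp subsection; and you are careful about the normalization $c_M = \bar a/(2|\partial\cD|\,\mu_{\cM}(M))$ versus $c_M\mu_{\cM}(M) = \bar a/(2|\partial\cD|)$, where the paper's notation silently shifts between the two.
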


 Note that in \cite{BCD}, the authors also proved  the CLT for piecewise H\"{o}lder observable function $f$ with discontinuity contained in the singular set of $T^{\pm k}$, for some $k\geq 1$.  The improvement here is that  we only require that $f\in \cH_{\gamma}$ is H\"{o}lder continuous on any stable manifolds, and $f$ is H\"{o}lder continuous on a small neighborhood of the singular set $\{(r,0)\,:\, r\in A\}$. This improvement allows us to include more examples of processes $\{f\circ T^n\}$. One such example is that we can take a union of stable manifolds $\cW^s$, and a small neighborhood $U$ of $A_1\cup A_2$,  such that $f=\bI_{\cW^s} + \bI_U$. Then we still can apply the above CLT for this process $\{f\circ T^n\}$.

\subsubsection{Semi-dispersing billiards}

Billiards in a rectangle with a finite number of obstacles removed are known as semi-dispersing billiards.
We  consider a rectangle $\bR$ with dimensions $\bl_1$ by $\bl_2$, and place a finite number of obstacle $\bB=\cup_{i=1}^N \bB_i$  in the rectangle.
The table is obtained by removing these three obstacles $\cD=\bR\setminus \bB$.

The phase space $\cM$ is made of finite number of connected components. We define
the reduced phase space $M$ as made up only of collisions with the
convex obstacles. We let $\R: M \rightarrow \NN$ be defined as the first return time function that defines the induced map.  Note that $M$ consists all collisions on $\partial\cB$. Thus the measure of $M$ satisfies
\beq
\mu_{\cM}(M)=\frac{|\partial\bB|}{|\partial\cD|}.
\eeq

The induced map $F: M \rightarrow M$ is then equivalent to the well-studied Lorentz gas
billiard map on a torus with infinite horizon \cite{CZ05a}, which is known to have exponential decay of correlations; see \cite{CM}. More precisely,
the particle can move freely parallel to a unit
lattice vector without ever colliding with a scatterer;
this property is called {\it infinite horizon}.
 The structure of the $m$-cells $M_m = \{x\in M : \R(x) = m\}$ is
examined thoroughly in \cite{BSC90,BSC91,CM}; we will make use of some of the facts presented in
those references. It was proved by Sz\'asz and Varj\'u  in \cite{SV07} that
a non-classical central limit theorem is satisfied in this billiard.
It was proved in \cite{CZ08} that this system has a decay of correlations bounded by
$\text{const} \cdot n^{-1}$.

We will consider three configurations of the semidispersing billiards.\\

\noindent\textbf{Case I: Semidispersing billiard with one channel of free flights}\\

To simplify the geometry, we first put one convex obstacle $\bB_1$ at the center of the rectangle, and two quarter disks $\bB_2, \bB_3$ centered at the two adjacent end points of the rectangle. See Figure \ref{semifig}(a). For this type of billiards,  there is only one channel of free flight in the unfolding space of the billiard table by removing all flat sides of the boundary.

We define the $r$-coordinate such that $r=0$ at one end point of the rectangle. We denote  $A$ as the range of $r$-coordinates for all 2-periodic points $(r,0)$, whose trajectory are parallel to the horizontal channel, see  Figure \ref{semifig}(a). Note that $|A|/2$ is the width of the free flight channel.
  We now define
$$I_f=\frac{\int_{r\in A} f(r,0)\,d r}{|A|}.$$

For any $n\geq 1$,  for any $x\in M_{n}$, its trajectory is contained in the horizontal channel in the unfolding space. Let $J_f:=I_f \cR$. Clearly, $J_f$ satisfies condition \textbf{(B1)}.  Moreover, similar to the stadia case, we can get  the estimation on the sum for $x\in M_{n}$,
$$\tf(x) =
\sum_{k=1}^n f(r_k,\varphi_k)=J_f(x)+E(x),$$ where $(r_k,\varphi_k)$ are
the standard coordinates of the reflection points, and
$$E(x)\leq C\R(x)^{1-\gamma/2}$$ for some uniform constant $C>0$. This verifies (\textbf{B1}).

 To estimate the constant $c_M$, we need to estimate the measure of the set $(\cR>n)$.

  Note that for $x\in M_{n}$, its image will hit the boundary set with $r$-coordinates in $A$ for $n-1$ times before exiting the associated channel. Also note that the collision angle along the trajectory $T^k x$, $k=1,\ldots, n-1$, is invariant. Thus the set $\cup_{m=n}^{\infty}\cup_{k=1}^{m-1} T^k M_{m}$  is squeezed between by two parallel lines with equations:
 $$\sin\varphi= \pm \frac{|A|}{2 n \bl_1 }+o(n^{-2}).$$   By symmetric property of the billiard table, we have
 \begin{align*}
 \mu_{\cM}(x\in \cM\,:\,\cR>n)&=\frac{1}{ 4|\partial \cD|}\int_{r\in A}\left(\frac{|A|}{n\bl_1}+\cO(n^{-2})\right)\,  d r=\frac{|A|^2}{ 4n\bl_1|\partial \cD|}+\cO(n^{-2}).\end{align*}
 Note that
 $$ \mu_{\cM}(x\in \cM\,:\,\cR>n) =\sum_{k=0}^{n-1} \mu(F^k(x\in M\,:\,\cR>n))=n \mu_{\cM}(x\in M\,:\,\cR>n). $$
 This  implies that
 \begin{align*}
 \mu_{\cM}(x\in M\,:\,\cR>n)&=\frac{1}{n}\cdot \mu_{\cM}(x\in \cM\,:\,\cR>n)=\frac{|A|^2}{ 4 n^2\bl_1|\partial \cD|}+\cO(n^{-3}).\end{align*}
 Thus we have the following estimations:
\begin{align*}
H(n)&= \EE(J_f^2\bI_{ |J_f|<n})=I_f^2\sum_{m=1}^{n} m^2  \mu(M_{m})
=2I_f^2\sum_{m=1}^{n} m \mu(\cR>m).
\end{align*}

 Now we let $n\to\infty$ to get (\textbf{B2}),
\begin{align*} c_M\mu_{\cM}(M)=\lim_{n\to\infty}\frac{\mu_{\cM}(M)H(c_n)}{\ln n}=\frac{I_f^2|A|^2}{ \bl_1|\partial \cD|}.
\end{align*}

Many properties of the $m$-cells and of the induced billiard map in the semi-dispersing case
are quite similar to those in billiards with cusps. In particular, the measure of each $m$-cell is again
$\mu(M_m) \asymp m^{-3}$,
and as a result we once more have that the expectation of the return time
map $\R$ is finite. It is also known that for a point $x\in M_m$ we have $Fx \in M_n$, where
$$\O(\sqrt{m}) < n < \O(m^2).$$
 Moreover, for  the transition probabilities between cells, we have for admissible $n$ that
$$p_{n,m}:=\mu(Fx \in M_n | x \in M_m) \asymp \frac{m+n}{n^3}.$$
From this it is clear that semi-dispersing billiards are square spreading.
Note that
\begin{eqnarray*} \sum\limits_{n=\sqrt{m}}^{m^2} n \mu(F x \in M_n | x \in M_m) =\cO(\sqrt{m}).
\end{eqnarray*}

 We have, for any  $m$ large,
\begin{align}\label{eq:linspr6}
\EE((I_f\cR\cdot \bI_{I_f\cR<c_n})\circ F(x) | \cR(x) = m) &= \sum\limits_{n\in\B_m} I_f \sum\limits_{n=\sqrt{m}}^{m^2} n \mu(F x \in M_n | x \in M_m)\cdot \1_{M_m}(x)=e_m\1_{M_m}(x),
\end{align}
where  $e_m=\cO(\sqrt{m})$. This implies that we can take $\theta=0$ and  $\cE_{n,0}=\sum_{m=1}^{c_n^2} e_m \bI_{M_m}$. This verifies (\textbf{B3}).

Next, we will prove (\textbf{B4}).

Let $\cM_{i,j}=\cup_{m=i}^j M_m$  be the union of cells with indices satisfying $1\leq i\leq j$.
Denote
\beq\nn \label{eq: function sequence2}
{\tf}_{i,j}:=\tf\cdot \bI_{M_{i,j}}-\mu(\tf\cdot \bI_{\cM_{i,j}})
\eeq
\begin{proposition}[Exponential decay of correlations for $\tf_{i,j}$]\label{boundedcovtf3}
Let $\tf$ be an induced function on $M$, and $\tf_{i,j}$ is as defined above for any $1\leq i\leq j\leq \infty$. Then
for all $k$ sufficiently large,
\beq\label{eq:condition D_r(u_n)2f3}
|\mu({\tf}_{i,j}\circ F^k\cdot{\tf}_{i,j})-\mu({ \tf}_{i,j})^2|\leq C\theta^k.
\eeq
where  $\theta\in (0,1)$ is a constant.
\end{proposition}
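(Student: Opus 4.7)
The plan is to follow the same overall strategy as in the proof of Proposition \ref{boundedcovtf}, adapted to the algebraic spreading geometry of semi-dispersing billiards with one free flight channel. The starting point is to foliate each cell $M_m$ (for $i\le m\le j$) into unstable curves that stretch completely across $M_m$ and use these to build a standard family $\cG_{i,j}=(\cM_{i,j},\mu_{i,j})$, where $\mu_{i,j}=\mu|_{\cM_{i,j}}$. By the algebraic spreading property \textbf{(G1)}, the image $FM_m$ is contained in $\cup_{n\in [c_1\sqrt{m},\,c_2m^2]}M_n$, and a careful geometric accounting (using the transition probabilities in \textbf{(G2)}) gives $\cZ(F\cG_{i,j})=\cO(1)$ uniformly in $i,j$, rather than growing with $i$ as in the stadia case. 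By Lemma \ref{growthlemma} one then obtains $\cZ(F^k\cG_{i,j})\le C(\vartheta^{k-1}+C')$ for all $k\ge 1$, and by Lemma \ref{growthlemma2} the key transition estimate
\[
\mu(F^k\cM_{i,j}\cap \cM_{m,l})\le C(\vartheta^{k-1}+C')\,\eps_{m,l}\,\mu(\cM_{i,j}),
\]
where $\eps_{m,l}\sim m^{-2}$ is the length of the smallest cell in $\cM_{m,l}$.

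The second step is to truncate $\tf_{i,j}$ at two intermediate levels $i\le p<q\le j$, writing $\tf_{i,j}=\tf_{i,p}+\tf_{p,q}+\tf_{q,j}$. For the bounded middle part $\tf_{i,q}$, one has $\|\tf_{i,q}\|_\infty\le Cq$ and $\|\tf_{i,q}\|_{C^\gamma}\le Cq^{1+\gamma}$ by the analogue of Lemma \ref{Holdertf} for the semi-dispersing case. The exponential decay of correlations for Holder functions, equation (\ref{correT}), then gives
\[
|\mu(\tf_{i,q}\circ F^k\cdot \tf_{i,q})-\mu(\tf_{i,q})^2|\le Cq^{2+2\gamma}\vartheta^k.
\]
For the cross-correlation and tail terms $\mu(\tf_{i,p}\cdot \tf_{q,j}\circ F^k)$, $\mu(\tf_{q,j}\cdot \tf_{i,p}\circ F^k)$, and $\mu(\tf_{p,j}\cdot \tf_{p,j}\circ F^k)$, the tail measure estimates $\mu(M_m)\asymp m^{-3}$, together with $\tf|_{M_m}=\cO(m)$ and the transition bound above, give contributions of order $\cO(q^{-1}\vartheta^k+q^{-2})$ and $\cO(p^{-1}\vartheta^k+p^{-2})$, via exactly the kind of double-sum estimate carried out in the proof of Proposition \ref{boundedcovtf}.

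The third step is to optimize the truncation: choosing $q=\vartheta^{-k/d}$ for some fixed $d>2+2\gamma$ and $p=\sqrt{q}$ balances the bounded-part bound $Cq^{2+2\gamma}\vartheta^k$ against the tail contributions and yields
\[
|\mu(\tf_{i,j}\circ F^k\cdot\tf_{i,j})-\mu(\tf_{i,j})^2|\le C\theta^k
\]
with $\theta=\vartheta^{2/d}\in(0,1)$, valid for all $k$ sufficiently large. The absence of the $\ln n$ factor that appeared in Proposition \ref{boundedcovtf} can be traced to the fact that, in the algebraic spreading regime, $\cZ(F\cG_{i,j})$ is bounded uniformly in $i$, so the transition estimate has no $i$-dependent blow-up, and the tail sums over cells produce only $\cO(p^{-1})$ and $\cO(q^{-1})$ terms rather than logarithms.

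The main obstacle will be establishing $\cZ(F\cG_{i,j})=\cO(1)$ uniformly in $i,j$. Because $FM_m$ can spread across cells $M_n$ for $n$ as large as $m^2$, its image is a long unstable curve cut by singularities into many pieces of lengths as small as $n^{-1}\sim m^{-2}$, and a naive bound on $\sum_\alpha |FW_\alpha|^{-1}\,\lambda_{\cG_{i,j}}(d\alpha)$ would blow up. The correct argument must exploit the transition probability estimate $p_{m,n}\asymp (m+n)/n^3$ from \textbf{(G2)} together with the cancellation that the short pieces in $FM_m$ carry proportionally small mass; this reproduces the usual ``self-averaging'' behind the Growth Lemma and yields a bounded $\cZ$. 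Making this precise, with uniform constants in $i$ and $j$, is the technically delicate part of the proof; once it is in place, the rest of the argument is a direct transcription of the proof of Proposition \ref{boundedcovtf} with the bookkeeping simplified by the absence of the logarithmic correction.
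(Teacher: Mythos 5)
Your plan follows the paper's overall template (standard family, truncation at two intermediate levels $p<q$, H\"older bound on the bounded middle block plus transition estimates on the tails, then optimize $p,q$ against $\vartheta^{-k}$), and the final choice $q=\vartheta^{-k/d}$, $p=\sqrt q$ is also what the paper does. However, the load-bearing claim you single out as the technically delicate part --- that $\cZ(F\cG_{i,j})=\cO(1)$ uniformly in $i,j$ because of a self-averaging cancellation --- is not what happens, and the paper does not use it. The paper's own two-line computation gives
\[
\cZ(F\cG_{i,j})\le C\,\mu(\cM_{i,j})^{-1}\sum_{m=i}^j m^{1/2}\cdot m^{-3}\;\asymp\; i^2\cdot i^{-3/2}\;=\;C_1\,i^{1/2},
\]
using the semi-dispersing geometry ($FM_m$ has length $\sim m^{-1/2}$ and width $\sim m^{-2}$, and the density of $\mu_{i,j}$ on $FM_m$ is $\sim m^{1/2}$; $\mu(\cM_{i,j})\asymp i^{-2}$). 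So $\cZ$ still grows with $i$, just at order $i^{1/2}$ rather than the $i$-growth in the stadia case. Correspondingly, the Growth Lemma transition estimate carries an $i^{1/2}$ factor, namely $\mu(F^k\cM_{i,j}\cap\cM_{m,l})\le C(\vartheta^{k-1}i^{1/2}+C'')\,m^{-2}i^{-2}$, rather than the $i$-free bound you wrote.

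This matters for why the $\ln n$ drops out. In Proposition \ref{boundedcovtf} the $\ln n$ enters because the tail double sums produce a $\ln j$ (with $j$ as large as $c_n$) against a $\vartheta^k\cdot\cZ\asymp\vartheta^k i$ prefactor. In the semi-dispersing computation the same double sums, run with the $i^{1/2}$-weighted transition bound, now produce only $\ln p$ and inverse powers of $p,q$; since $p=\sqrt q=\vartheta^{-k/(2d)}$, any residual $\ln p\sim k$ is absorbed by the exponential factor. The disappearance of the logarithm is therefore a consequence of the slower ($i^{1/2}$) growth of $\cZ$ interacting with the specific exponents in the tail sums, not of a uniform $\cZ$ bound. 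Your sketch of the cross-correlation bounds as $\cO(q^{-1}\vartheta^k+q^{-2})$ and $\cO(p^{-1}\vartheta^k+p^{-2})$ skips this bookkeeping; the paper's actual intermediate bounds are $\cO\bigl(q^{-1}(C_1\vartheta^k i^{-1/2}+C_2)\bigr)$, $\cO\bigl(i^{-1}(C_1\vartheta^k q^{-1/2}+C_2)\bigr)$, and $\cO(p^{-2}(\vartheta^k p+C_1))+\cO(p^{-1}\vartheta^k\ln p)$. You should redo the double-sum estimates carrying the $i^{1/2}$ factor explicitly; trying to prove the uniform $\cZ$ bound instead would fail, because the claim is simply false for this geometry.
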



\begin{proof}
For any  set $M_m$ in $\cM_{i,j}$, we foliate it into unstable curves  that stretch completely from one side to the other. Let $\{W_{\beta}, \beta\in \cA, \lambda\}$ be the foliation, and $\lambda$ the factor measure defined on the index set $\cA$.  This enables us to define a standard family, denoted as $\cG_{i,j}=(\cM_{i,j},
\mu_{i,j})$, where $\mu_{i,j}:=\mu|_{\cM_{i,j}}$. 

Our first step in proving the decay of correlations is to estimate the $\cZ$ function of $F^k\cG_{i,j}$.
 For semi-dispersing billiards, $FM_m$ is a strip that has length $\sim m^{-1/2}$ and width $\sim m^{-2}$. Also, by construction, the singular set is close to grazing collisions, so the density of $\mu_{i,j}$ is of order $\sim m^{1/2}$ on $ FM_m$. Thus we obtain for $j\leq \infty$,
\begin{align*}
\cZ(F\cG_{i,j})&=\mu(\cM_{i,j})^{-1}\int_{\beta\in\cA}|FW_{\beta}|^{-1}\,\lambda_{\cG_{i,j}}(d\beta)\\
&\le C
\mu(\cM_{i,j})^{-1}\cdot \sum_{m=i}^{j} m^{1/2}\cdot m^{-3} \\
&= C_1 i^2\cdot i^{-3/2}=C_1 i^{1/2}.\end{align*}

Note that for any large $m<l$, we have
$$\mu(F\cM_{i,j}\cap F\cM_{m,l})\leq F_*\mu_{i,j}(r<\eps_{m,l})$$
where $\eps_{m,l}$ is approximately the width of the smallest cell in $\cM_{m,l}$, which is of order $m^{-2}$.
Using  Lemma \ref{growthlemma2}, we have that
\begin{align}\label{estMiMk1}
\mu(F^k(\cM_{i,j})\cap \cM_{m,l})&=\mu(F^{n+1}(\cM_{i,j})\cap F\cM_{m,l})\nonumber\\
&\leq F^k_*\mu_{i,j}(r<\eps_{m,l})\nonumber\\
&\leq C'(\vartheta^{k-1}\cZ(FG_{i,j})+C'')\eps_{m,l}\mu(\cM_{i,j})\nonumber\\
&\leq C(C_1\|f\|_{\infty}\vartheta^{k-1}i^{1/2}+C'')m^{-2} i^{-2}
\end{align}

For any fixed large $k$, we truncate $ \tf_{i,j}$ at two extra levels, with $i\leq p<q\leq j$, which will be chosen later, i.e.
$$ \tf_{i,j}= \tf_{i,p}+ \tf_{p,q}+ \tf_{q,j}$$

The function $ \tf_{i,q}= \tf_{i,p}+ \tf_{p,q}$ is bounded with $\|\tf_{i,q}\|_{\infty}\leq C\|f\|_{\infty} q$, and H\"{o}lder norm $\| \tf_{i,q}\|_{\gamma}\leq C\|f\|_{\gamma} q^{1+\gamma}$. Thus by (\ref{correT}), we know that
\beq\label{iqiq2}
\mu( \tf_{i,q}\circ F^k,  \tf_{i,q})\leq C\| \tf_{i,q}\|_{C^{\gamma}}^2 \theta^k +\mu( \tf_{i,q})^2\leq C q^{2+2\gamma}\theta^k +\cO(q^{-2})
\eeq
where we used the fact that
$$\mu( \tf_{i,q})=-\mu( \tf_{q,j})=\cO(q^{-1})$$

Next we estimate
\begin{align*}
&\mu( \tf_{i,p}, \tf_{q,j}\circ F^k)\leq C\|f\|_{\infty}^2\sum_{m=i}^p\sum_{l=q}^j m\cdot l\cdot \mu(M_l\cap F^k M_m)\\
&=C\|f\|_{\infty}^2\sum_{m=i}^p\sum_{l=q}^j \sum_{t=1}^m \sum_{s=1}^l \mu(M_l\cap F^k M_m)\\
&=C\|f\|_{\infty}^2\sum_{m=i}^p\sum_{t=1}^m
\left(\sum_{s=1}^q \sum_{l=q}^j  \mu(M_l\cap F^k M_m)+\sum_{s=q}^j \sum_{l=s}^j  \mu(M_l\cap F^k M_m)\right)\\
&=C\|f\|_{\infty}^2\sum_{m=i}^p\sum_{t=1}^m
\left(q\cdot \mu(\cM_{q,j}\cap F^k M_m)+\sum_{s=q}^j   \mu(\cM_{s,j}\cap F^k M_m)\right)\\
&=C\|f\|_{\infty}^2\sum_{t=1}^i
\left(q\cdot \mu(\cM_{q,j}\cap F^k \cM_{i,p})+\sum_{s=q}^j   \mu(\cM_{s,j}\cap F^k \cM_{i,p})\right)\\
&+C\|f\|_{\infty}^2\sum_{t=i}^p
\left(q\cdot \mu(\cM_{q,j}\cap F^k \cM_{t,p})+\sum_{s=q}^j   \mu(\cM_{s,j}\cap F^k \cM_{t,p})\right)\\
&=C\|f\|_{\infty}^2
\left( (C_1\|f\|_{\infty}\vartheta^{k-1}i^{1/2}+C'')q^{-1} i^{-2}+\sum_{s=q}^j (C_1\|f\|_{\infty}\vartheta^{k-1}i^{1/2}+C'')s^{-2} i^{-2} \right)\\
&+C\|f\|_{\infty}^2\sum_{t=i}^p
\left((C_1\|f\|_{\infty}\vartheta^{k-1}t^{1/2}+C'')q^{-1} t^{-2}+\sum_{s=q}^j  (C_1\|f\|_{\infty}\vartheta^{k-1}t^{1/2}+C'')s^{-2} t^{-2}\right)\\
\end{align*}
Then one can check that
\beq\label{lesalpha02}
\mu( \tf_{i,p}, \tf_{q,j}\circ F^k)=\cO(q^{-1}(C_1\vartheta^k i^{-1/2}+C_2))
\eeq

Similarly, we can show that
\beq\label{lesalpha00}
\mu( \tf_{q,j}, \tf_{i,p}\circ F^k)=\cO(i^{-1}(C_1\vartheta^k q^{-1/2}+C_2))
\eeq

Next, we estimate
\begin{align*}
&\mu( \tf_{p,j}, \tf_{p,j}\circ F^k)\leq C\|f\|_{\infty}^2\sum_{m=p}^j\sum_{l=p}^j m\cdot l\cdot \mu(M_l\cap F^k M_m)\\
&=C\|f\|_{\infty}^2\sum_{t=1}^p
\left(p\cdot \mu(\cM_{p,j}\cap F^k \cM_{p,j})+\sum_{s=p}^j   \mu(\cM_{s,j}\cap F^k \cM_{p,j})\right)\\
&+C\|f\|_{\infty}^2\sum_{t=p}^j
\left(p\cdot \mu(\cM_{p,j}\cap F^k \cM_{t,j})+\sum_{s=p}^j   \mu(\cM_{s,j}\cap F^k \cM_{t,j})\right)\\
&=C\|f\|_{\infty}^2
\left( (C_1\|f\|_{\infty}\vartheta^{k-1}p+C'')p^{-2} +\sum_{s=p}^j (C_1\|f\|_{\infty}\vartheta^{k-1}p+C'')s^{-2} p^{-1} \right)\\
&+C\|f\|_{\infty}^2\sum_{t=p}^j
\left((C_1\|f\|_{\infty}\vartheta^{k-1}t+C'')p^{-1} t^{-2}+\sum_{s=p}^j  (C_1\|f\|_{\infty}\vartheta^{k-1}t+C'')s^{-2} t^{-2}\right)\\
\end{align*}

Then one can check that
\beq\label{lesalpha0}
\mu( \tf_{p,j}, \tf_{p,j}\circ F^k)=\cO(p^{-2}(\vartheta^k p+C_1))+\cO(p^{-1}\vartheta^k \ln p)
\eeq

Combining the above estimations, we have
\begin{align*}
\mu( \tf_{i,j}, \tf_{i,j}\circ F^k)&\leq C_1 \vartheta^k(q^{2+2\gamma}+p^{-1} \ln p)+C_2 p^{-1}\end{align*}

Now we choose $q=\vartheta^{-k/d}$, $p=\sqrt{q}$, where $d=6(1+\gamma)$. Then above estimations implies that
$$\mu( \tf_{i,j}, \tf_{i,j}\circ F^k)\leq C_1 \theta^k$$
where $\theta=\vartheta^{2/3}$.
\end{proof}

 Combining the above facts and Proposition \ref{propaCLT}, we have proved the following result.

\vspace{0.5cm}

\noindent\textbf{Case II. Semidispersing billiards with $N$ channels of free flights} \\ \\

 Assume the semi-dispersing table has exactly $N$  channels of free flights in the unfolding space of the billiard table by removing all flat sides of the boundary.  For simplicity, we assume the table is a square with  length $\bl$.

\begin{figure}[h]
\psfrag{A1}{$A_1$} \psfrag{A}{$A$} \psfrag{A2}{$A_2$}
\psfrag{x0}{$x_0$} \psfrag{n}{$\varphi$}
\psfrag{y0}{$y_0$}
\psfrag{I1}{$I_1$} \psfrag{(a)}{$(a)$}\psfrag{(b)}{$(b)$} \centering
 \includegraphics[height=3in]{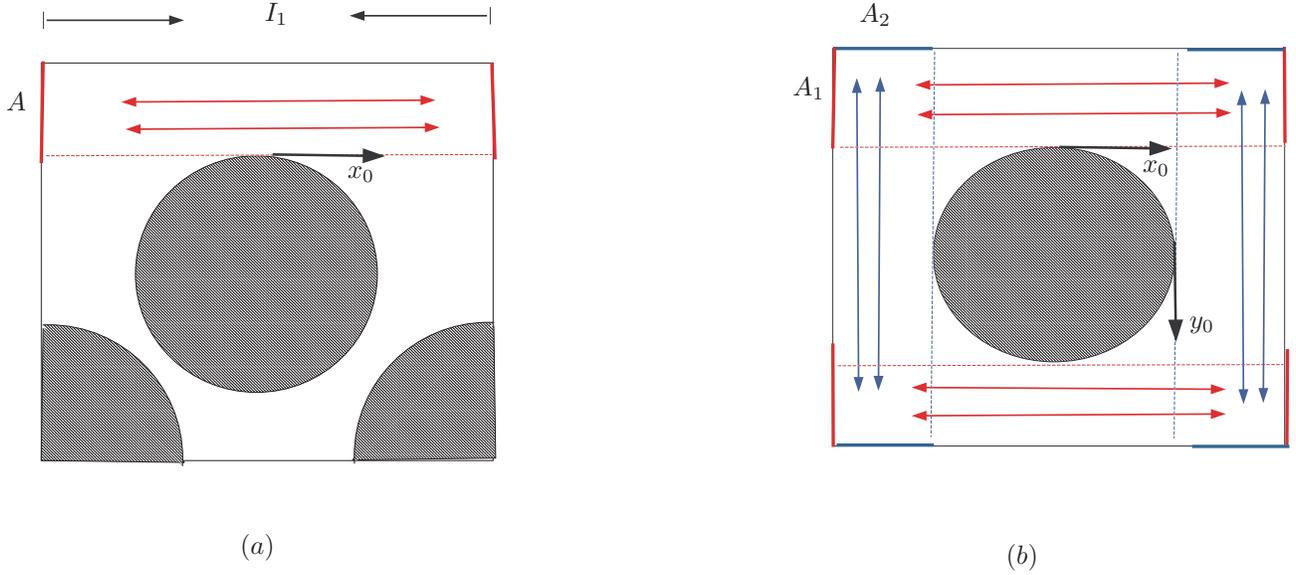}
 \caption{Two types of semidispersing billiards}\label{semifig}
 \end{figure}

For any $i\in \{1,\ldots, N\}$, we consider the $i$-th channel in the unfolding space, which is defined as the unbounded region  bounded by two parallel  tangent lines $l_i^1, l_i^2$ of the scatterers, such that  any lines parallel to $l_i^1$ in the channel will not touch any scatterers. Let $\bx_i$ be a tangent vector of the scatterers, such that its  trajectory coincides with  $l_i^1$. Note that $T \bx_i$ is based on a straight side of the square.  Let $\varphi_i$ be the collision angle of $T\bx_i$, which is formed by the line $l_i^1$ and the  straight side of the square. Let $\bI_i$ be the length of the segment of $I_i^1$ in the rectangle.  We also call $\varphi_i$ \textit{the angle of the $i$-th channel}.

 We denote  $A_i$, $i=1,\ldots,N$ as the range of $r$-coordinates on the boundary of the square, such that for any  point $(r, \varphi_i)$,  with $r\in A_i$, its trajectory is parallel to the $i$-th channel.  Note that $|A_i|/2$ is the width of the corresponding channel.
   We now define
$$a_i=\frac{\int_{r\in A_i} f(r,\varphi_i)\,d r}{|A_i|},\,\,\,\,\,\,\,i=1,\ldots,N.$$

For any $n\geq 1$, we decompose $M_n=\cup_{k=1}^N M_{n,k}$ into $N$ components, such that for any $x\in M_{n,k}$, its trajectory is contained in the $k$-th channel in the unfolding space.  Let $$U_{n,i}:=\cup_{m\geq n} M_{m,i}$$ be the collection of all collision vectors in $\cM$, whose forward trajectories experience at least $n$-iterations in the $i$-th channel.

We now define  $$J_f:=\sum_{i=1}^N a_i \cdot \R\cdot \bI_{U_{1,i}}.$$ Clearly, $J_f$ satisfies condition \textbf{(B1)}.  Moreover, similar to  case I, we can get  the estimation on the sum for $x\in M_{n,i}$,
$$\tf(x) =
\sum_{k=1}^{n-1} f(T^kx)=J_f(x)+E_i(x)$$ where
$$E_i(x)\leq C\R(x)^{1-\gamma/2}$$ for some uniform constant $C>0$.

 To estimate the constant $c_M$, we need to estimate the measure of the set $U_{n,i}$.
   Note that for $x\in M_{n,i}$, its image will hit the boundary set with $r$-coordinates in $A_i$ for $n-1$ times before exiting the associated channel. Also note that the collision angle along the trajectory $T^k x$, $k=1,\cdots, n-1$, is invariant. Thus the set $\cup_{m\geq n}\cup_{k=1}^{m-1} T^k M_{m,i}$  is squeezed between by two parallel lines with equations:
 $$\sin\varphi= \sin\varphi_i\pm \frac{|A_i|}{2 n \bl_i }+o(n^{-2}).$$   By the symmetric property of the billiard table, we have
 \begin{align*}
 \mu(U_{n,i})&=\frac{1}{ 4n|\partial \cD|}\int_{r\in A_i}\left(\frac{|A_i|}{n\bl_i}+\cO(n^{-2})\right)\,  d r=\frac{|A_i|^2}{ 4n^2\bl_i|\partial \cD|}+\cO(n^{-2}),\end{align*}
 where we have used the fact that
 $$ \mu(\cup_{m\geq n}\cup_{k=1}^{m-1} T^k M_{m,i}) =\sum_{k=0}^{n-1} \mu_{\cM}(F^kU_{n,i})=n \mu(U_{n,i}). $$

 Note that
\begin{align*} c_M&=\lim_{t\to\infty}\frac{ \EE(J_f^2\bI_{ |J_f|<t})}{2\ln t}=\lim_{t\to\infty}t^2\,\mu(|J_f|\geq t)\nonumber\\
&=\lim_{t\to\infty}\sum_{k=1}^{N}t^2  \cdot \mu(U_{t/a_k,k}) =\sum_{k=1}^{N}\frac{a_k^2|A_k|^2}{4\bI_i |\partial D|\mu_{\cM}(M)}.\end{align*}

 Combining the above facts and Proposition \ref{propaCLT}, we have proved the following results.
\begin{theorem}
Assume the unfolding space of the semidispersing billiards has $N$ channels of free flights. Let $f\in \cH_{\gamma}$ be H\"{o}lder continuous on a small neighborhood of the singular set $\{(r,\varphi_i)\,:\, r\in A_i, i=1, \cdots, N\}$.   Assume $\sum_{k=1}^N(\int_{r\in A_i} f(r,\varphi_i)\,dr)^2\neq 0,$ then the  sequence
$$\frac{f+\cdots +f\circ T^{n-1}-n\mu(f)}{ \sqrt{  \sigma_f^2\cdot n \ln n}}\xrightarrow{d} N(0,1)$$
converges in distribution, as $n\to\infty$, with $$\sigma_f^2=\sum_{i=1}^N\frac{(\int_{r\in A_i} f(r,\varphi_i)\,dr)^2}{ 4 \bI_i |\partial \cD|}.$$
\end{theorem}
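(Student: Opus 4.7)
The plan is to apply Proposition \ref{propaCLT} directly, so the work reduces to verifying assumptions (\textbf{B1}), (\textbf{B2}), (\textbf{G1}), (\textbf{G2}) for the induced first-return system $(F,M,\mu)$, and then identifying the explicit constant. Conditions (\textbf{G1}) and (\textbf{G2}) are inherited from Case I: the structure of the $m$-cells $M_m$ and the transition probabilities $p_{n,m} \asymp (m+n)/n^3$ on the admissible range $n \in [c_1\sqrt{m}, c_2 m^2]$ are standard for semi-dispersing billiards with infinite horizon (see \cite{BSC90,BSC91,CM}), and do not depend on how many free-flight channels the unfolded table has. Thus the genuinely new content is the $N$-channel decomposition needed for (\textbf{B1}) and the corresponding tail estimate (\textbf{B2}).

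For (\textbf{B1}), I would decompose each level set as $M_n=\cup_{i=1}^N M_{n,i}$, where $M_{n,i}$ consists of those $x\in M_n$ whose $T$-orbit is trapped in the $i$-th unfolded channel for the $n-1$ intermediate collisions. For $x\in M_{n,i}$, the collision angle stays constant along $T^k x$ and equals $\varphi_i+O(n^{-1})$, while the $r$-coordinates $r_k$ are uniformly spaced to leading order in $A_i$ with step $\asymp n^{-1}$. Using the H\"older continuity of $f$ on a neighborhood of $\{(r,\varphi_i): r\in A_i\}$, the Riemann sum approximation yields
\[
\tilde f(x) \;=\; n\,a_i + E(x), \qquad a_i := \frac{1}{|A_i|}\int_{A_i} f(r,\varphi_i)\,dr,
\]
with $E(x)=O(\cR(x)^{1-\gamma/2})$. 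This is exactly $\tilde f = J_f + E$ with $J_f=\sum_{i=1}^N a_i\,\cR\,\bI_{U_{1,i}}$. The bound (\ref{CovEnk}) on $\mathrm{Var}(\sum_k E_{n,k})$ follows as in Case I, using the exponential decay of correlations from \cite{CZ09} for the piecewise constant truncations of $E$.

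For (\textbf{B2}), the angle-invariance along the $i$-th channel forces
\[
\bigcup_{m\ge n}\bigcup_{k=0}^{m-1} T^k M_{m,i} \;\subset\; \Bigl\{(r,\varphi):\ \sin\varphi = \sin\varphi_i \pm \tfrac{|A_i|}{2n\,\mathbf{I}_i}+O(n^{-2})\Bigr\},
\]
so by direct integration against $d\mu_{\cM}$ and the identity $\mu_{\cM}(\cup_{m\ge n}\cup_{k=0}^{m-1} T^k M_{m,i})=n\,\mu(U_{n,i})$, one gets $\mu(U_{n,i})=\frac{|A_i|^2}{4n^2\,\mathbf{I}_i\,|\partial\cD|\,\mu_{\cM}(M)}+O(n^{-3})$. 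Since $J_f$ takes the value $a_i\cR$ on $U_{1,i}$ with disjoint supports across $i$, the tail bound of (\textbf{B2}) becomes
\[
c_M \;=\; \lim_{t\to\infty} t^2\,\mu(|J_f|\ge t) \;=\; \sum_{i=1}^N a_i^2 \cdot \lim_{t\to\infty} t^2\,\mu(U_{t/|a_i|,\,i}) \;=\; \sum_{i=1}^N \frac{a_i^2\,|A_i|^2}{4\,\mathbf{I}_i\,|\partial\cD|\,\mu_{\cM}(M)}.
\]

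With all four hypotheses verified, Proposition \ref{propaCLT} delivers the CLT with scaling $\sqrt{c_M\,\mu_{\cM}(M)\cdot n\ln n}$, the factor $(1+\theta)/(1-\theta)$ being $1$ since $\theta=0$ for algebraically spreading systems by Lemma \ref{Ed2}. Substituting $a_i^2|A_i|^2=\bigl(\int_{A_i} f(r,\varphi_i)\,dr\bigr)^2$ and multiplying by $\mu_{\cM}(M)$ collapses to the announced formula for $\sigma_f^2$. The main obstacle I anticipate is not any one estimate but carefully isolating the $N$ channels: one must rule out cross-channel interference when applying the Riemann sum approximation (the tangential sub-sequences $M_n'\subset M_n$ near each channel's endpoints carry measure $O(n^{-4})$ and so can be absorbed into the $E$-term, exactly as in the skewed stadium argument), and one must verify that the disjointness of the sets $U_{1,i}$ lets the tail of $J_f$ split cleanly into channel contributions rather than producing mixed terms that would corrupt $c_M$.
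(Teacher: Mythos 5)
Your proposal follows essentially the same route as the paper: verify that the induced semi-dispersing system satisfies the algebraic spreading conditions \textbf{(G1)-(G2)} (which are independent of the number of channels), build the channel decomposition $M_n=\cup_{i=1}^N M_{n,i}$, define $J_f=\sum_i a_i\,\cR\,\bI_{U_{1,i}}$ with $a_i=|A_i|^{-1}\int_{A_i}f(r,\varphi_i)\,dr$, establish the squeeze of $\cup_{m\ge n}\cup_k T^kM_{m,i}$ between the lines $\sin\varphi=\sin\varphi_i\pm|A_i|/(2n\bI_i)+o(n^{-2})$ to get $\mu(U_{n,i})\asymp n^{-2}$ with the right constant, and then sum the disjoint channel contributions to compute $c_M$ before invoking Proposition \ref{propaCLT} with $\theta=0$. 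Your handling of the $\mu$ vs.\ $\mu_{\cM}$ normalization is in fact slightly more careful than what is printed (the paper's displayed formula for $\mu(U_{n,i})$ drops the $\mu_{\cM}(M)$ in the denominator and only reinstates it in the $c_M$ line), and your explicit remarks about ruling out cross-channel interference and absorbing the tangential subsequences $M_n'$ (of measure $\cO(n^{-4})$) into the $E$-term make precise points the paper leaves implicit in this case (it states them only in the skewed-stadium discussion). These are refinements, not a different argument.
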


\section*{Acknowledgements}

 H.Z. is supported in
part by NSF CAREER grant DMS-1151762, as well as  a grant from the Simons
Foundation (337646, HZ).

\end{document}